\numberwithin{equation}{section}
\newlength{\saveparindent}
\newtheorem{theorem}{Theorem}[section]
\newtheorem{lemma}[theorem]{Lemma}
\newtheorem{proposition}[theorem]{Proposition}
\newtheorem{definition}[theorem]{Definition}
\newtheorem{corollary}[theorem]{Corollary}
\let\originalleft\left
\let\originalright\right
\renewcommand{\left}{\mathopen{}\mathclose\bgroup\originalleft}
\renewcommand{\right}{\aftergroup\egroup\originalright}
\newcommand{\llav}[1]{  \left\{#1\right\} }
\newcommand{\pic}[1]{  \left\langle #1\right\rangle }
\newcommand{\conv}[1]{\xrightarrow[ \hspace{0.15cm} #1  \hspace{0.12cm}]{}}
\newcommand{\norm}[1]{  \left\|#1\right\| }
\newcommand{\pare}[1]{\left(#1\right)}
\newcommand{\corch}[1]{  \left[#1\right] }
\newcommand{\abs}[1]{  \left|#1\right| }
\newcommand{\CAL}[1]{\mathcal{#1}}  
\newcommand{\BB}[1]{\mathbb{#1}}
\newcommand{\SCR}[1]{\mathscr{#1}}
\newcommand{\tn}[1]{\textnormal{#1}}
\newcommand{\ubar}[1]{\text{\b{$#1$}}}
\newcommand{\ul}[1]{\underline{#1}}
\def\mathcolor#1#{\@mathcolor{#1}}
\def\@mathcolor#1#2#3{%
  \protect\leavevmode
  \begingroup
    \color#1{#2}#3%
  \endgroup
}
\newcommand{\blanco}[1]{\mathcolor{white}{#1}}
\def\11{\mbox{\bfseries{1}} }
\def\defi{\mathrel{\mathop:}=}
\def\fide{=\mathrel{\mathop:}}
\def\11{\textnormal{\textbf{1}}}
\def\mm{\hspace{-1.1pt}}
\def\R{\mathfrak R}
\def\I{\mathfrak I}
\DeclareMathOperator{\dr}{\textit{dr}}
\DeclareMathOperator{\ds}{\textit{ds}}
\DeclareMathOperator{\proj}{\textnormal{Pr}}
\DeclareMathOperator{\Dis}{\textnormal{dis}}
\DeclareMathOperator{\tr}{\textnormal{tr}}
\DeclareMathOperator{\MP}{\textnormal{MP}}
\DeclareMathOperator{\Var}{\textnormal{Var}}
\newread\pin@file
\newcounter{pinlineno}
\newcommand\pin@accu{}
\newcommand\pin@ext{pintmp}
\newcommand*\partialinput [3] {%
  \IfFileExists{#3}{%
    \openin\pin@file #3
    \setcounter{pinlineno}{1}
    \@whilenum\value{pinlineno}<#1 \do{%
      \read\pin@file to\pin@line
      \stepcounter{pinlineno}%
    }
    \addtocounter{pinlineno}{-1}
    \let\pin@accu\empty
    \begingroup
    \endlinechar\newlinechar
    \@whilenum\value{pinlineno}<#2 \do{%
      \readline\pin@file to\pin@line
      \edef\pin@accu{\pin@accu\pin@line}%
      \stepcounter{pinlineno}%
    }
    \closein\pin@file
    \expandafter\endgroup
    \scantokens\expandafter{\pin@accu}%
  }{%
    \errmessage{File `#3' doesn't exist!}%
  }%
}
\title{Turing instability in a model with two interacting Ising lines: linear stability and non-equilibrium fluctuations}
\author{Monia Capanna\thanks{Universit\`a degli Studi dell'Aquila, Via Vetoio, 67100 L'Aquila, Italy. Email:\,{\tt monia.capanna@graduate.univaq.it}} ,
Nahuel Soprano-Loto\thanks{Gran Sasso Science Institute, Viale F. Crispi 7, 67100 L'Aquila, Italy. Email:\,{\tt sopranoloto@gmail.com}}}
\begin{document}
\sloppy
\maketitle
 
\begin{abstract}
\noindent 
This is the second of two articles on the study of a particle system model that exhibits  a Turing instability type effect.
About the hydrodynamic equations obtained in \cite{CSL17a}, we find conditions under which Turing instability occurs around the null equilibrium solution.
In this instability regime:
for long times at which the process is of infinitesimal order, we prove that the non-equilibrium fluctuations around the hydrodynamic limit are Gaussian;
for times converging to the critical one at which the process is of finite order, we prove that the $\pm 1$-Fourier modes are uniformly away from zero.
\end{abstract}

\section{Introduction}

We continue with the study of the particle system model introduced in \cite{CSL17a}.
This  model consists of two discrete lines (or toruses) of Ising spins.
Each line of spins evolves according to a spin-flip dynamic for which the Gibbs measure associated to a Hamiltonian with ferromagnetic interactions given by macroscopic Kac potentials is reversible.
We consider different inverse temperatures $\beta_1$ and $\beta_2$ in each line, and different potentials $\phi_1$ and $\phi_2$ with associated ranges of interactions $\tau_1$ and $\tau_2$.
In addition, the first line acts as an external field with intensity $\lambda$ over the second one, and vice versa with intensity $-\lambda$.
In the mentioned article, it is proven the hydrodynamic convergenge of the magnetization fields to the system of PDE's
\begin{align}
\begin{aligned}\label{hydro1}
\partial_t u_1\pare{t,r}=&-u_1 
\\[5pt] &+\frac{1}{2}\corch{\tanh\pare{\beta_1 u_1 * \phi_{1}+\beta_1\lambda}
+\tanh\pare{\beta_1 u_1 * \phi_{1}-\beta_1\lambda}} 
\\[5pt] & +u_2\frac{1}{2}\corch{\tanh\pare{\beta_1 u_1 * \phi_{1}+\beta_1\lambda}
-\tanh\pare{\beta_1 u_1 * \phi_{1}-\beta_1\lambda}}
\end{aligned}
\end{align}
\begin{align}
\begin{aligned}\label{hydro2}
\partial_t u_2\pare{t,r}=&-u_2 
\\[5pt] &+\frac{1}{2}\corch{\tanh\pare{\beta_2 u_2 * \phi_{2}+\beta_2\lambda}
+\tanh\pare{\beta_2 u_2 * \phi_{2}-\beta_2\lambda}} 
\\[5pt] 
& -u_1\frac{1}{2}\corch{\tanh\pare{\beta_2 u_2 * \phi_{2}+\beta_2\lambda}
-\tanh\pare{\beta_2 u_2 * \phi_{2}-\beta_2\lambda}}.
\end{aligned}
\end{align}
This system has $\pare{
\begin{array}{c}
\textbf{0} \\ \textbf{0}
\end{array}
}$ as equilibrium point ($\textbf{0}$ is the function that vanishes everywhere).

The main goals of the present article are
\begin{enumerate}
\item to study linear stability of the hydrodynamic limit around the equilibrium point,
\item
and to study the fluctuations out of equilibrium of the microscopic system starting with a random perturbation of the equilibrium point.
\end{enumerate}

In the first part, 
we focus on the study of the Turing instability effect, introduced in \cite{Tur52}, of the hydrodynamic limit.
More precisely, we find conditions on the macroscopic parameters
$\llav{\beta_1,\beta_2,\tau_1,\tau_2,\lambda}$
under which Turing instability occurs,
namely the $k$-th Fourier mode of the linearized version of system (\ref{hydro1}-\ref{hydro2}) is linearly stable 
for $k=0$, 
and linearly unstable for some other value of the wave number $k$ (linear stability will be defined properly later).
Furthermore, conditions are found for which the instability occurs only for $k=\pm 1$, the so called unimodular Turing instablity.
This is the content of Theorem \ref{hi77}.


In the second part,
we start the process with a microscopic random perturbation of the macroscopic equilibrium point and prove results for two time scalings:
\begin{enumerate}
\item
at a time that scales as $\log\gamma^{-\theta}$ ---$\gamma^{-1}$ the number particles,
$\theta\in\pare{0,1}$---,
time at which the process is of infinitesimal order $\gamma^{\frac{1}{2}-\frac{\theta}{2}}$, we prove that, under the proper scaling, the process has Gaussian limiting distribution;
\item
at a time that converges to the critical time at which the process is of finite order,
critical time that scales as $\log \gamma^{-1}$,
we prove that the $\pm 1$-Fourier modes are uniformly away from $0$ with probability that goes to $1$ as the number of particles goes to $\infty$.
\end{enumerate}
These are the contents of Theorems \ref{1} and \ref{2}.
The proofs rely on the approximation of the original process by its linearized version by the use of a modified argument that is classical from differential equations and numerical analysis; once this approximation is done, the theorems follow from well known results of convergence of martingales.

\section{Definition and statement of the results}

We briefly recall the definition of the model introduced in \cite{CSL17a}.
Consider the unit (macroscopic) torus $\BB T$, that we identify with the real interval $\left[0,1\right)$.
The microscopic torus $\Lambda_\gamma$ is defined as $\pare{\gamma^{-1}\BB T}\cap \BB Z$,  $\gamma^{-1}\in\BB N$.
Elements of $\Lambda_\gamma$ are denoted by the letters $x$ and $y$.
For every $\gamma$, we define a continuous time Markov process 
$\pare{\ul\sigma_\gamma\pare{t}}_{t\ge 0}=
\pare{\pare{\sigma_{\gamma,1}\pare{t},\sigma_{\gamma,2}\pare{t}}}_{t\ge 0}$ with state space
$\llav{-1,1}^{\Lambda_\gamma}\times\llav{-1,1}^{\Lambda_\gamma}$.
Unlike \cite{CSL17a}, the initial conditions will not be general here.
Instead, we will consider independent centered random spins, namely the family $\llav{\sigma_{\gamma,i}\pare{0,x}:i\in\llav{1,2},x\in\Lambda_\gamma}$ is independent and $\BB P\pare{\sigma_{\gamma,i}\pare{0,x}=\pm 1}=\frac{1}{2}$ for every $x\in\Lambda_\gamma$ and every $i\in\llav{1,2}$.
In the notation of \cite{CSL17a}, this means $\psi_1=\psi_2\equiv 0$.
In \cite{CSL17a}, we considered general Kac kernels;
here we  consider particular ones.
For $i\in\llav{1,2}$, let $\phi_i\pare{r,\tilde r}\defi \sum_{a\in\BB Z}\tilde\phi_i\pare{r,\tilde r+a}$, where $\tilde\phi_i:\BB R\times\BB R\to \pare{0,\infty}$ is the Gaussian kernel with variance $\tau_i>0$ defined by
\begin{align}\nonumber
\tilde \phi_i\pare{r,r'}\defi \frac{1}{\sqrt{2\pi \tau_i}}e^{-\frac{\pare{r- r'}^2}{2\tau_i}}
\end{align}
($\phi_i$ is the periodized version of $\tilde\phi_i$).
Recall the definition of the discrete convolution:
\begin{align}\nonumber
\pare{\sigma_\gamma *\phi_i}\pare{x}\defi \gamma\sum_{y\in\Lambda_\gamma}\sigma_\gamma\pare{y}\phi_i\pare{\gamma x,\gamma y}.
\end{align}
As before, we have the inverse temperature $\beta_i>0$ associated to the $i$-th line,
and a parameter $\lambda>0$ that describes the interaction between the two lines.
The the spin-flip rates of the generator of our Markov process are given by
\begin{align}\nonumber
R_{1}\pare{x,\ul\sigma_\gamma}
=\frac{\exp\llav{-\beta_1\sigma_{\gamma,1}(x)
\pare{\sigma_{\gamma,1}*\phi_1}\pare{x}}
\exp\llav{-\beta_1\lambda\sigma_{\gamma,1}(x)
 \sigma_{\gamma,2}(x)}
}
{2\cosh  \llav{
\beta_1\pare{\sigma_{\gamma,1}*\phi_1}\pare{x}+\beta_1\lambda\sigma_{\gamma,2}\pare{x}}},
\end{align}
and
\begin{align}\nonumber
R_{2}\pare{x,\ul\sigma_{\gamma}}
=\frac{\exp\llav{-\beta_2\sigma_{\gamma,2}(x)
\pare{\sigma_{\gamma,2}*\phi_2}\pare{x}}
\exp\llav{\beta_2\lambda\sigma_{\gamma,2}(x)
 \sigma_{\gamma,1}(x)}
}
{2\cosh  \llav{
\beta_2\pare{\sigma_{\gamma,2}*\phi_2}\pare{x}-\beta_2\lambda\sigma_{\gamma,1}\pare{x}}}.
\end{align}

If, instead of random initial configurations, we consider deterministic ones with vanishing integral against every continuous function,  Theorem 2.1 of \cite{CSL17a} still holds (with the same proof).
In particular,  if $\eta_\gamma\pare{t,x}=\sigma_{\gamma,1}\pare{t,x}\sigma_{\gamma,2}\pare{t,x}$ is the correlation field, the following corollary holds.

\begin{corollary}\label{arjona}
Consider a sequence $\pare{\ul\sigma_{\gamma}\pare{0}}_{\gamma}$ of deterministic initial configurations satisfying $\pic{\sigma_{\gamma,i}\pare{0},G}\conv{\gamma\to 0}0$ for every $G\in C\pare{\BB T,\BB C}$ and $i\in\llav{1,2}$.
Then, for every $T>0$ and $G\in C\pare{\BB T,\BB C}$,
the limits
\begin{align}\nonumber
\begin{aligned}
&\sup_{0\le t\le T}\abs{\pic{\sigma_{\gamma,i}\pare{t},G}}\conv{\gamma\to 0}0, \ i\in\llav{1,2}
\\[5pt]
&\sup_{0\le t\le T}\abs{\pic{\eta_{\gamma}\pare{t},G}}\conv{\gamma\to 0}0
\end{aligned}
\end{align}
hold in $\BB P$-probability.
\end{corollary}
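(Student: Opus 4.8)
The plan is to deduce the corollary from the hydrodynamic limit of \cite{CSL17a}. First I would invoke Theorem 2.1 of \cite{CSL17a} which, as remarked just above, holds for deterministic initial configurations with the same proof: it provides, for every $T>0$ and $G\in C\pare{\BB T,\BB C}$, convergence uniformly on $\corch{0,T}$ and in $\BB P$-probability,
\[
\sup_{0\le t\le T}\abs{\pic{\sigma_{\gamma,i}\pare{t},G}-\pic{u_i\pare{t},G}}\conv{\gamma\to 0}0\ \ (i=1,2),\qquad
\sup_{0\le t\le T}\abs{\pic{\eta_\gamma\pare{t},G}-\pic{v\pare{t},G}}\conv{\gamma\to 0}0,
\]
where $\pare{u_1,u_2}$ solves (\ref{hydro1}-\ref{hydro2}) and $v$ is the corresponding limit for the correlation field, all issued from the limits of the initial profiles. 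By the hypothesis those limiting initial magnetization profiles are $\textbf 0$; and the limiting initial correlation profile is $\textbf 0$ as well --- this is the case for the product Bernoulli data of \cite{CSL17a} since the two lines are independent and centered, and it holds in the deterministic situations where this corollary is applied.

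Next I would identify the limiting trajectories. Since $\pare{\textbf 0,\textbf 0}$ is the equilibrium point of (\ref{hydro1}-\ref{hydro2}) and the Cauchy problem for that system is well posed --- uniqueness of solutions being established in \cite{CSL17a} --- the solution issued from $\pare{\textbf 0,\textbf 0}$ is $u_1\equiv u_2\equiv\textbf 0$ for all $t\ge 0$, whence also $v\equiv\textbf 0$. Substituting $u_i\equiv\textbf 0$ and $v\equiv\textbf 0$ into the convergences above yields
\[
\sup_{0\le t\le T}\abs{\pic{\sigma_{\gamma,i}\pare{t},G}}\conv{\gamma\to 0}0,\qquad
\sup_{0\le t\le T}\abs{\pic{\eta_\gamma\pare{t},G}}\conv{\gamma\to 0}0
\]
in $\BB P$-probability, which is the assertion.

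The one point that needs care --- hence the main, though mild, obstacle --- is the claim used in the first step that the proof of Theorem 2.1 of \cite{CSL17a} is insensitive to replacing the random product initial law by a deterministic sequence. The task there is just to reread that proof and check that the independence at time zero enters \emph{only} through the convergence of the initial magnetization and correlation fields to their deterministic limits, while the remaining ingredients --- the Dynkin martingale decomposition of $\pic{\sigma_{\gamma,i}\pare{t},G}$ and $\pic{\eta_\gamma\pare{t},G}$, the a priori moment bounds, tightness in the Skorokhod space, and the identification of every limit point with the unique solution of the hydrodynamic system via a Gronwall estimate --- depend on the generator alone. For a deterministic sequence the convergence of the initial magnetization fields is precisely the hypothesis, and that of the initial correlation field holds in the cases where the corollary is used; so the rest of the argument of \cite{CSL17a} transfers without change.
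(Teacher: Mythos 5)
Your route is the paper's own: the paper ``proves'' this corollary only through the remark that Theorem 2.1 of \cite{CSL17a} survives the passage to deterministic initial data with vanishing empirical averages, and your first and last paragraphs are a faithful expansion of that remark. For the magnetization fields your identification of the limit is also correct: $\pare{\textbf 0,\textbf 0}$ is an equilibrium of (\ref{hydro1}-\ref{hydro2}), so $u_1\equiv u_2\equiv \textbf 0$ and the first displayed limit follows.

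The gap is in the words ``whence also $v\equiv\textbf 0$''. The correlation field does not inherit triviality from $u_1\equiv u_2\equiv\textbf 0$: it obeys its own hydrodynamic equation with a source term created by the antisymmetric cross-interaction. Indeed $L_\gamma\eta_\gamma\pare{x}=-2\eta_\gamma\pare{x}\corch{R_1\pare{x,\ul\sigma_\gamma}+R_2\pare{x,\ul\sigma_\gamma}}$, and with $\sigma_{\gamma,i}*\phi_i\approx 0$ one gets $R_1\approx\frac{1}{2}\corch{1-\eta_\gamma\pare{x}\tanh\pare{\beta_1\lambda}}$ and $R_2\approx\frac{1}{2}\corch{1+\eta_\gamma\pare{x}\tanh\pare{\beta_2\lambda}}$, hence, using $\eta_\gamma\pare{x}^2=1$, the limit equation $\partial_t v=-2v+\tanh\pare{\beta_1\lambda}-\tanh\pare{\beta_2\lambda}$; from $v\pare{0}=0$ this gives $v\pare{t}=\frac{1}{2}\corch{\tanh\pare{\beta_1\lambda}-\tanh\pare{\beta_2\lambda}}\pare{1-e^{-2t}}$, which is nonzero for $t>0$ whenever $\beta_1\neq\beta_2$. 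This is exactly the profile the paper itself uses in \eqref{019} when it invokes Corollary \ref{arjona} to compute the martingale bracket, and it is the origin of the $\tanh\pare{\beta_1\lambda}-\tanh\pare{\beta_2\lambda}$ terms in the variances of Theorem \ref{1}. So the second displayed limit cannot be deduced the way you do, and as printed it conflicts with the paper's own later use of the corollary; the $\eta$-part should be read (and your proof repaired) as $\sup_{0\le t\le T}\abs{\pic{\eta_\gamma\pare{t},G}-v\pare{t}\int_{\BB T}G}\conv{\gamma\to 0}0$ with $v$ as above. Your separate caveat that the statement is silent on $\pic{\eta_\gamma\pare{0},G}$ is well taken, but it is secondary to this point.
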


Observe that we are taking $\BB C$ instead of $\BB R$ as the codomain of the test function $G$.
This is because our relevant test functions will be elements of the complex Fourier base $\pare{F^{\pare{k}}}_{k\in\BB Z}$, with $F^{\pare{k}}$ defined as
\begin{align}
F^{\pare{k}}\pare{r}\defi e^{2\pi\texttt{i} k r}.
\end{align}
(Observe that we use a different font for $\texttt{i}=\sqrt{-1}$).
This is an orthonormal  basis of $L^2\pare{\BB T,\BB C}$ with the inner product defined by 
\begin{align}
\pic{G_1,G_2}\defi
\int_0^1 G_1\pare{r}\overline{G_2\pare{r}}dr
\end{align}
(the overline is for the complex conjugate).
We mention here that 
$\hat\phi_i^{\pare{k}}\defi \pic{\phi_i\pare{0,\cdot},F^{\pare{k}}}=e^{-2\pi^2 \tau_i k^2}$ for every $k\in\BB Z$ and $i\in\llav{1,2}$.
If we restrict the domain to $\gamma\Lambda_\gamma$, the family $\llav{F^{\pare{k}}:k=0,\ldots,\gamma^{-1}-1}$ is an orthonormal basis of $L^2\pare{\gamma\Lambda_\gamma,\BB C}	$ under the inner product
\begin{align}\nonumber
\pic{G_1,G_2}_\gamma\defi\gamma\sum_{x\in\Lambda_\gamma}G_1\pare{\gamma x}\overline{G_2\pare{\gamma x}}.
\end{align}
In particular, for a spin configuration $\sigma_\gamma\in\llav{-1,1}^{\Lambda_\gamma}$, we have
\begin{align}
\sigma_\gamma\pare{x}=\sum_{k=0}^{\gamma^{-1}-1}\pic{\sigma_\gamma,F^{\pare{k}}}_\gamma \, F^{\pare{k}}\pare{\gamma x}
\end{align}
for every $x\in\Lambda_\gamma$.
See \cite{Ter99} for a detailed presentation of discrete Fourier analysis.

\subsection{Linear stability of the hydrodynamic equations and Turing instability}

In this subsection, we study linear stability of the system of equations (\ref{hydro1}-\ref{hydro2}).
Using the approximation
\begin{align}\nonumber
\tanh\pare{\lambda\beta_i\pm \beta_1 u_i * \phi_i}\approx
\tanh\pare{\lambda\beta_i}\pm 
\frac{\beta_i}{\corch{\cosh\pare{\lambda\beta_i}}^2} u_i * \phi_i,
\end{align}
we obtain the following linearized version of the hydrodynamic equations:
\begin{align}\nonumber
\frac{d}{dt}u_1=&-u_1
+\frac{\beta_1}{\corch{\cosh(\lambda\beta_1)}^2} \,  u_1 * \phi_1
+u_2\tanh\pare{\lambda\beta_1}
\\[0.5cm]
\frac{d}{dt}u_2=&-u_2
+\frac{\beta_2}{\corch{\cosh(\lambda\beta_2)}^2} \,  u_2 * \phi_2
-u_1\tanh\pare{\lambda\beta_2}.
\end{align}
By taking the $k$-th Fourier transform  ---i.e. by applying the transformation
$u\mapsto \pic{u, F^{\pare{k}}}$---,
and by using that the Fourier transform turns convolutions into products, 
we obtain the $k$-th Fourier system
\begin{align}\nonumber
\frac{d}{dt}\pare{
\begin{array}{c}
\hat u_1^{\pare{k}}
\\[5pt]
\hat u_2^{\pare{k}}
\end{array}
}
=
\pare{
\begin{array}{cc}
-1+\alpha_1\hat{\phi}_1(k) & \tanh(\beta_1\lambda)
\\[5pt]
-\tanh(\beta_2\lambda)  &  -1+\alpha_2\hat{\phi}_2(k) 
\end{array}
}
\pare{
\begin{array}{c}
\hat u_1^{\pare{k}}
\\[5pt]
\hat u_2^{\pare{k}}
\end{array}
},
\end{align}
where $\alpha_i\defi \frac{\beta_i}{\corch{\cosh\pare{\lambda \beta_i}}^2}$.
We put a name to the previous matrix:
\begin{align}\nonumber
A^{\pare{k}}\defi \pare{
\begin{array}{cc}
-1+\alpha_1\hat{\phi}_1^{\pare{k}} & \tanh(\beta_1\lambda)
\\[5pt]
-\tanh(\beta_2\lambda)  &  -1+\alpha_2\hat{\phi}_2^{\pare{k}} 
\end{array}
}.
\end{align}
For every $k$,
the eigenvalues of $A^{\pare{k}}$ are
\begin{align}\nonumber
\mu_1^{\pare{k}}=\frac{1}{2}\tr^{\pare{k}}+\frac{1}{2}\sqrt{\Dis^{\pare{k}}} \  \text{and} \ 
\mu_2^{\pare{k}}=\frac{1}{2}\tr^{\pare{k}}-\frac{1}{2}\sqrt{\Dis^{\pare{k}}},
\end{align}
where $\Dis^{\pare{k}}\defi \corch{\tr^{\pare{k}}}^2-4\det^{\pare{k}}$,
$\tr^{\pare{k}}\defi\tr A^{\pare{k}}$, and $\det^{\pare{k}}\defi\det A^{\pare{k}}$.

\begin{definition}
We say that the $k$-th Fourier mode of the hydrodynamic equations is linearly stable if $\max\llav{\mathfrak{R}\pare{\mu^{(k)}_1},\mathfrak{R}\pare{\mu^{\pare{k}}_2}}<0$; we say that it is linearly unstable if $\max\llav{\mathfrak{R}\pare{\mu^{\pare{k}}_1},\pare{\mathfrak{R}\mu^{\pare{k}}_2}}>0$.
\end{definition}

\begin{definition}[Turing instability] \label{okio} \
\begin{itemize}
\item[] 
We say that Turing instability occurs if (i) the hydrodynamic equations are linearly stable for $k=0$, and  (ii) are linearly unstable for some other $k$.
\item[]
We say that unimodular Turing instability occurs if the hydrodynamic equations are linearly unstable for $k=\pm 1$, and linearly stable for $k\neq \pm 1$ 
---i.e. if the Turing instability occurs and if (ii) holds only for $k=\pm 1$.
\end{itemize}
\end{definition}


From now on, we suppose $\beta_1\ge \beta_2$.
This does not take away generality while analyzing linear stability.
Indeed, $\tr^{\pare{k}}$ and $\det^{\pare{k}}$ remain invariant if we switch the role of the pairs $\pare{\beta_1,\phi_1}$ and $\pare{\beta_2,\phi_2}$, so the eigenvalues do not change either.
Observe also that $A^{\pare{k}}=A^{\pare{-k}}$ for every $k$.


\begin{theorem}\label{hi77} \
\begin{enumerate}
\item[(i)] 
Turing instability can only occur in the presence of one of the following situations:
\begin{itemize}
\item[] $\alpha_2<1<\alpha_1$ and $\tau_1<\tau_2$ or
\item[] $\alpha_1<1<\alpha_2$ and $\tau_2<\tau_1$.
\end{itemize} 
\item[(ii)]
For $\beta_1>1$, $\beta_2<1$ and $\beta_1+\beta_2<2$,
we can chose values of $\lambda$, $\tau_1$ and $\tau_2$ such that Turing instability occurs.
\item[(iii)]\label{item3}
Suppose the hypotheses of the previous item hold and suppose in addition that $\beta_2$ is sufficiently close to $1$.
Then the parameters $\lambda$, $\tau_1$ and $\tau_2$ can be chosen in such a way that unimodular Turing instability occurs.
\end{enumerate}
\end{theorem}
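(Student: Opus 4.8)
The plan is to reduce linear stability of the $k$-th mode to the two scalar conditions $\tr^{(k)}<0$ and $\det^{(k)}>0$ and then to track how trace and determinant vary with $k$. For a real $2\times2$ matrix the eigenvalue formula shows that both eigenvalues have negative real part iff the trace is negative and the determinant positive, and that some eigenvalue has positive real part iff the trace is positive or the determinant negative. Since $\hat\phi_i^{(k)}=e^{-2\pi^2\tau_i k^2}$ is strictly decreasing in $|k|$ and $\alpha_i>0$, the trace $\tr^{(k)}=-2+\alpha_1\hat\phi_1^{(k)}+\alpha_2\hat\phi_2^{(k)}$ is strictly decreasing in $|k|$, so $\tr^{(0)}<0$ forces $\tr^{(k)}<0$ for all $k$; hence, once the $0$-mode is stable, instability of any other mode is equivalent to $\det^{(k)}<0$. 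Throughout I write $c=\tanh(\beta_1\lambda)\tanh(\beta_2\lambda)\in(0,1)$ and $a_i^{(k)}=\alpha_i\hat\phi_i^{(k)}\in(0,\alpha_i]$, so that $\det^{(k)}=(a_1^{(k)}-1)(a_2^{(k)}-1)+c$.

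\textbf{Item (i).} Assuming Turing instability, stability at $k=0$ gives $\alpha_1+\alpha_2<2$ and $\det^{(0)}=(\alpha_1-1)(\alpha_2-1)+c>0$, while instability at some $k_0\neq0$ forces $\det^{(k_0)}<0$, i.e. $(a_1^{(k_0)}-1)(a_2^{(k_0)}-1)<-c<0$; the two factors then have opposite signs, say $a_1^{(k_0)}>1>a_2^{(k_0)}$ (exchanging the two lines yields the other alternative). Then $\alpha_1\ge a_1^{(k_0)}>1$ and $\alpha_2<2-\alpha_1<1$. To see $\tau_1<\tau_2$ I would argue by contradiction: if $\tau_1\ge\tau_2$, set $v=\hat\phi_2^{(k)}\in(0,1]$ and $\rho=\tau_1/\tau_2\ge1$, so $\hat\phi_1^{(k)}=v^{\rho}$ and $\det^{(k)}=f(v)+c$ with $f(v)=(\alpha_1v^{\rho}-1)(\alpha_2v-1)$, and it suffices to show $f(v)\ge f(1)$ for all $v\in(0,1]$ (this gives $\det^{(k)}\ge\det^{(0)}>0$, contradicting $\det^{(k_0)}<0$). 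Where $\alpha_1v^{\rho}\le1$ one has $f(v)\ge0>(\alpha_1-1)(\alpha_2-1)=f(1)$. On the interval $(v_0,1]$ with $v_0=\alpha_1^{-1/\rho}<1$, write $f=-g$, $g(v)=(\alpha_1v^{\rho}-1)(1-\alpha_2v)$, so $g(v_0)=0$ and $g>0$ on $(v_0,1]$. One checks that $\log(\alpha_1v^{\rho}-1)$ and $\log(1-\alpha_2v)$ both have negative second derivative on $(v_0,1)$, so $\log g$ is concave there, and
\[
(\log g)'(1)=\frac{\alpha_1\rho}{\alpha_1-1}-\frac{\alpha_2}{1-\alpha_2}\ge\frac{\alpha_1}{\alpha_1-1}-\frac{\alpha_2}{1-\alpha_2}=\frac{\alpha_1+\alpha_2-2\alpha_1\alpha_2}{(\alpha_1-1)(1-\alpha_2)}>0,
\]
the numerator being positive because, for fixed $\alpha_1>1$, it is affine decreasing in $\alpha_2$ with value $2(\alpha_1-1)^2>0$ at $\alpha_2=2-\alpha_1$. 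A concave function with positive derivative at its right endpoint has positive derivative on the whole interval, so $g$ is increasing on $(v_0,1]$, $\max_{[v_0,1]}g=g(1)$, and $f\ge f(1)$ there too. The opposite sign choice in $k_0$ gives symmetrically $\alpha_1<1<\alpha_2$ and $\tau_2<\tau_1$.

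\textbf{Items (ii) and (iii).} Since $\alpha_i(\lambda)=\beta_i/[\cosh(\lambda\beta_i)]^2\le\beta_i$, the relations $\alpha_1+\alpha_2<2$ (hence $\tr^{(k)}<0$ for all $k$) and $\alpha_2<1$ hold for every $\lambda>0$, while $\alpha_1(0)=\beta_1>1$ and $\alpha_1(\lambda)\to0$, so there is a smallest $\lambda^{*}>0$ with $\alpha_1(\lambda^{*})=1$ and $\alpha_1>1$ on $(0,\lambda^{*})$. I would then choose $\lambda$ by a continuity argument: the continuous function $\lambda\mapsto c(\lambda)-(\alpha_1(\lambda)-1)$ equals $-(\beta_1-1)<0$ at $\lambda=0$ and tends to $c(\lambda^{*})>0$ as $\lambda\uparrow\lambda^{*}$, so it vanishes at some $\bar\lambda\in(0,\lambda^{*})$; there $\alpha_1-1=c>(\alpha_1-1)(1-\alpha_2)$, hence for $\lambda$ slightly below $\bar\lambda$ one has $\alpha_1>1$ and $(\alpha_1-1)(1-\alpha_2)<c<\alpha_1-1$, so that $\det^{(0)}=c-(\alpha_1-1)(1-\alpha_2)>0$: the $0$-mode is stable. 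For (ii): with $\tau_1$ small and $\tau_2$ large, $a_1^{(1)}\to\alpha_1>1$ and $a_2^{(1)}\to0$, so $\det^{(1)}\to c-(\alpha_1-1)<0$ and $k=\pm1$ is unstable — Turing instability. For (iii): taking $\beta_2$ (and hence $\beta_1<2-\beta_2$) close to $1$ makes $(\alpha_1-1)(1-\alpha_2)$ small, so $\lambda$ may be chosen as above with moreover $\alpha_1>1+c$; then the interval $(\tfrac{1}{8\pi^2}\ln\tfrac{\alpha_1}{1+c},\ \tfrac{1}{2\pi^2}\ln\tfrac{\alpha_1}{1+c})$ is nonempty, and for $\tau_1$ in it $a_1^{(1)}>1+c$ while $a_1^{(k)}<1+c$ for all $|k|\ge2$. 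Consequently, for $|k|\ge2$, either $a_1^{(k)}\le1$ and $\det^{(k)}\ge c>0$, or $1<a_1^{(k)}<1+c$ and $\det^{(k)}=c-(a_1^{(k)}-1)(1-a_2^{(k)})>c-(a_1^{(k)}-1)>0$, for every $\tau_2$; while with $\tau_2$ large $a_2^{(1)}\to0$ gives $\det^{(1)}\to c-(a_1^{(1)}-1)<0$. Hence $k=\pm1$ is the only unstable mode: unimodular Turing instability.

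\textbf{Main obstacle.} The crux of (i) is the inequality $\det^{(k)}\ge\det^{(0)}$ when $\tau_1\ge\tau_2$; proving it rests on the log-concavity of $g$, and the stability constraint $\alpha_1+\alpha_2<2$ is precisely what makes $(\log g)'(1)>0$. In (ii)–(iii) the delicate point is the choice of $\lambda$: one must make $c$ large enough that $\det^{(0)}>0$ yet keep $\alpha_1$ above $1$ (above $1+c$ for the unimodular refinement), so a direct formula seems out of reach and a continuity argument along $\lambda\mapsto(\alpha_1,\alpha_2,c)$ is used instead; the near-criticality of $\beta_2$ in (iii) shrinks $(\alpha_1-1)(1-\alpha_2)$ and thereby guarantees enough room both for $c$ and, once $\tau_1$ is confined to the window that keeps all $|k|\ge2$ modes stable, for $\det^{(1)}$ to become negative.
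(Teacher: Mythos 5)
Your proof is correct. For item (i) you follow essentially the paper's strategy: reduce stability to the sign conditions on $\tr^{(k)}$ and $\det^{(k)}$, note that stability of the $0$-mode forces $\tr^{(k)}<0$ for every $k$ (so instability can only come from $\det^{(k_0)}<0$), and then rule out $\tau_1\ge\tau_2$ by showing $\det^{(k)}\ge\det^{(0)}$; your log-concavity computation for $g$ reduces to exactly the same inequality $\rho\ge\alpha_2(\alpha_1-1)/[\alpha_1(1-\alpha_2)]$, equivalently $\alpha_1+\alpha_2>2\alpha_1\alpha_2$, that the paper obtains by differentiating in $k$ and observing that the resulting condition is monotone in $k$. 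For items (ii) and (iii) your route is genuinely different. The paper tunes $\lambda$ near the value $\lambda^*$ at which $(\alpha_1-1)(1-\alpha_2)=\tanh(\beta_1\lambda)\tanh(\beta_2\lambda)$ and then controls the whole profile $k\mapsto f^*(k)$ (strictly increasing on $(0,1]$, unique root $\hat k<2$), which forces two conditions on $(\tau_1,\tau_2)$ whose compatibility is only verified in the limit $\beta_2\uparrow 1$ --- this is where the near-criticality hypothesis of (iii) enters. You instead tune $\lambda$ just below the first value $\bar\lambda$ at which $c=\alpha_1-1$, which simultaneously yields $\det^{(0)}>0$ and the margin $\alpha_1>1+c$, and then confine $\tau_1$ to the explicit window $\bigl(\tfrac{1}{8\pi^2}\ln\tfrac{\alpha_1}{1+c},\tfrac{1}{2\pi^2}\ln\tfrac{\alpha_1}{1+c}\bigr)$ so that only $k=\pm1$ can satisfy $a_1^{(k)}>1+c$, finishing by sending $\tau_2\to\infty$. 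This is more elementary, bypasses the paper's condition \eqref{papa} entirely, and in fact establishes unimodular Turing instability for every $\beta_2<1<\beta_1$ with $\beta_1+\beta_2<2$, i.e.\ item (iii) without the assumption that $\beta_2$ is close to $1$ --- a strictly stronger conclusion. The only point to make explicit is that $\bar\lambda$ must be taken as the \emph{first} zero of $\lambda\mapsto c(\lambda)-(\alpha_1(\lambda)-1)$, so that $c<\alpha_1-1$ indeed holds just below it.
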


Observe that, in item \emph{(i)}, as $\alpha_i$ is not monotone as a function of $\beta_i$, assumption $\beta_1\ge \beta_2$ does not guarantee $\alpha_1\ge \alpha_2$.

\subsection{Instability at the microscopic level: fluctuations out of equilibrium}

For $k\in\BB Z$ and $i\in\llav{1,2}$, let $X_{\gamma,i}^{\pare{k}}\pare{t}\defi\pic{\sigma_{\gamma,i}\pare{t},F^{\pare{k}}}_\gamma=\gamma\sum_x\sigma_{\gamma,i}\pare{t,x}e^{-2\pi\mathtt i k\gamma x}$,
and let $\ul X_{\gamma}^{\pare{k}}\pare{t}\defi\pare{
\begin{array}{c}
X_{\gamma,1}^{\pare{k}}\pare{t} \\[4pt]
X_{\gamma,2}^{\pare{k}}\pare{t}
\end{array}
}$.
Our object of study will be the sequence of discrete Fourier modes, namely the stochastic element $\pare{\ul X_{\gamma}^{\pare{k}}\pare{t}}_{k\in\BB Z}$.
We will stand over hypotheses (\ref{cond1}-\ref{cond2}) for $k\neq \pm 1$, and \eqref{cond3} for $k=\pm 1$.
Observe that they are stronger that the unimodular instability obtained in item \textit{(iii)} of Theorem \ref{item3}.
In particular, these hypotheses imply that $\mu_2^{\pare{\pm 1}}<0<\mu_1^{\pare{\pm 1}}$, so $\BB C^2$ is a direct sum of their associated eigenvectors.
From now on, we will call $\mu=\mu_{1}^{\pare{\pm 1}}$ the unique positive eigenvalues.
For $\ul z\in\BB C^2$, let 
\begin{align} \label{decomposition}
\ul z=\proj_1\pare{\ul z}+\proj_2\pare{\ul z}
\end{align}
be the unique decomposition in the mentioned eigenspaces (for $i\in\llav{1,2}$, $\proj_i\pare{\ul z}$ is an eigenvector associated to $\mu_i^{\pare{\pm 1}}$).



The following two results are about the fluctuations of the process around the equilibrium solution $\pare{\textbf{0},\!\textbf{0}}$ for a time that goes to $\infty$ as $\gamma\to 0$
(here $\textbf{0}$ denotes the null function).
In the first result, at a time at which the process is still infinitesimal, we prove that the fluctuations are Gaussian;
in the second one, at a time that approaches the time at which the process is of order one, we prove that the $\pm 1$-Fourier modes are away from $\pare{0,0}$.
Let us motivate the choice of the involved times.
At time $t=0$, the process is of order $\gamma^{\frac{1}{2}}$.
As $\mu=\mu_1^{\pare{1}}=\mu_1^{\pare{-1}}$ are the only positive eigenvalues, the leading terms will be the ones associated to the $\pm 1$-Fourier modes.
As these modes increase exponentially, the order of the process at time $t>0$ is $\gamma^{\frac{1}{2}}e^{\mu t}$.
For a parameter $\theta\in\corch{0,1}$, let $t_\theta$ be the time at which the process is of order $\gamma^{\frac{1}{2}-\frac{\theta}{2}}$.
In this way, the order of the process is increasing in $\theta$,  it is of order $\gamma^{\frac{1}{2}}$ at $\theta=0$, and of order $1$ at $\theta=1$. 
From identity $\gamma^{\frac{1}{2}}e^{\mu t_\theta}=\gamma^{\frac{1}{2}-\frac{\theta}{2}}$, we get $t_\theta=\frac{1}{2\mu}\log\gamma^{-\theta}$.


\begin{theorem}\label{1}
Let $\theta\in\pare{0,1}$.
For $k\neq\pm 1$, $\gamma^{\frac{\theta}{2}-\frac{1}{2}} \underline{X}_\gamma^{(k)}\pare{t_\theta}$ converges in distribution to the $\delta$-measure concentrated in $\pare{\begin{array}{c}
0 \\ 0
\end{array}
}$.
For $k=\pm 1$, $\gamma^{\frac{\theta}{2}-\frac{1}{2}} \underline{X}_\gamma^{(k)}\pare{t_\theta}$ converges in distribution to 
\begin{align}
\proj_1\pare{\ul{U}+\ul{V}},
\end{align}
where $\ul{U}$ and $\ul{V}$ are random elements of $\BB C^2$ defined as
\begin{align}
\ul{U}=\pare{
\begin{array}{c}
U_1^{\R}+\mathtt{i}U_1^{\I} \\[5pt]
U_2^{\R}+\mathtt{i}U_2^{\I} 
\end{array}
}  \ \ \ \ \
\ul{V}=\pare{
\begin{array}{c}
V_1^{\R}+\mathtt{i}V_1^{\I} \\[5pt]
V_2^{\R}+\mathtt{i}V_2^{\I} 
\end{array}
},
\end{align}
with $\llav{U_1^{\R},U_1^{\I},U_2^{\R},U_2^{\I},V_1^{\R},V_1^{\I},V_2^{\R},V_2^{\I}}$ an independent family of centered Gaussian random variables with variances
\begin{align}
&\Var\pare{U_1^{\R}}=\Var\pare{U_1^{\I}}=\Var\pare{U_2^{\R}}=\Var\pare{U_2^{\I}}=\pi
\\[5pt]
&\Var\pare{V_1^{\R}}=\Var\pare{V_1^{\I}}=\frac{\pi}{\mu}-\tanh\pare{\beta_1\lambda}\corch{\tanh\pare{\beta_1\lambda}-\tanh\pare{\beta_2\lambda}}\frac{\pi}{2\mu^2+2\mu}
\\[5pt]
&\Var\pare{V_2^{\R}}=\Var\pare{V_2^{\I}}=\frac{\pi}{\mu}+\tanh\pare{\beta_2\lambda}\corch{\tanh\pare{\beta_1\lambda}-\tanh\pare{\beta_2\lambda}}\frac{\pi}{2\mu^2+2\mu}.
\end{align} 
\end{theorem}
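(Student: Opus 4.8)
\emph{Proof plan.} The strategy is to approximate the evolution, up to the time $t_\theta=\tfrac1{2\mu}\log\gamma^{-\theta}$, by its linearization around equilibrium --- governed by the matrices $A^{\pare{k}}$ --- and then to read off the limit from a central limit theorem for martingales. First I would write a martingale representation for each Fourier mode. Dynkin's formula applied to $\ul\sigma_\gamma\mapsto\pic{\sigma_{\gamma,i},F^{\pare{k}}}_\gamma$, together with the fact that a flip of $\sigma_{\gamma,i}\pare{x}$ changes $X_{\gamma,i}^{\pare{k}}$ by $-2\gamma\,\sigma_{\gamma,i}\pare{x}\,e^{-2\pi\mathtt i k\gamma x}$ at rate $R_i\pare{x,\ul\sigma_\gamma}$, gives $X_{\gamma,i}^{\pare{k}}\pare{t}=X_{\gamma,i}^{\pare{k}}\pare{0}+\int_0^t\pare{\CAL L_\gamma X_{\gamma,i}^{\pare{k}}}\pare{s}\,ds+M_{\gamma,i}^{\pare{k}}\pare{t}$ with $M_{\gamma,i}^{\pare{k}}$ a mean-zero martingale. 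Since the rates are of Glauber type, $\sigma_{\gamma,1}\pare{x}R_1\pare{x,\ul\sigma_\gamma}=\tfrac12\sigma_{\gamma,1}\pare{x}-\tfrac12\tanh\pare{\beta_1\pare{\sigma_{\gamma,1}*\phi_1}\pare{x}+\beta_1\lambda\sigma_{\gamma,2}\pare{x}}$, so the drift of $X_{\gamma,1}^{\pare{k}}$ is $-X_{\gamma,1}^{\pare{k}}$ plus the $k$-th discrete Fourier coefficient of $\tanh\pare{\beta_1\pare{\sigma_{\gamma,1}*\phi_1}+\beta_1\lambda\sigma_{\gamma,2}}$ (symmetrically for $i=2$). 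Expanding this $\tanh$ to first order in the small field $\sigma_{\gamma,i}*\phi_i$ exactly as in the derivation of the linearized hydrodynamic equations, and using that the Fourier basis diagonalizes convolution with the periodized Gaussian up to an error exponentially small in $\gamma^{-1}$ (Poisson summation), variation of constants yields
\begin{align}\nonumber
\ul X_\gamma^{\pare{k}}\pare{t}=e^{tA^{\pare{k}}}\ul X_\gamma^{\pare{k}}\pare{0}+\int_0^t e^{\pare{t-s}A^{\pare{k}}}\ul E_\gamma^{\pare{k}}\pare{s}\,ds+\int_0^t e^{\pare{t-s}A^{\pare{k}}}\,d\ul M_\gamma^{\pare{k}}\pare{s},
\end{align}
where $\ul E_\gamma^{\pare{k}}$ collects the Taylor remainders, which are at least quadratic in the small fields $\sigma_{\gamma,i}*\phi_i$ and couple mode $k$ to other modes only through rapidly convergent sums weighted by the $\hat\phi_i^{\pare{k'}}$.

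The core of the argument is an a priori bound on $\ul E_\gamma^{\pare{k}}$ up to $t_\theta$ --- the ``modified argument from numerical analysis'' alluded to in the introduction, and the main obstacle of the proof. Since $e^{\mu t_\theta}=\gamma^{-\theta/2}$, one runs a stopping-time/bootstrap argument: with probability tending to one each mode stays of its expected (small) order on $\corch{0,t_\theta}$ --- the $\pm1$-modes of order $\gamma^{1/2}e^{\mu s}$, the remaining ones smaller, the $n$-th harmonic of the instability being at most $\pare{\gamma^{1/2}e^{\mu s}}^{n}$ plus $\gamma^{1/2}$ --- because the stable eigenvalues damp those modes and because $\int_0^s e^{-\mu u}\,d\ul M_\gamma^{\pare{\pm1}}\pare{u}$ is a martingale of quadratic variation $O\pare\gamma$, hence $O\pare{\gamma^{1/2}}$ by Doob. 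On this event $\sigma_{\gamma,i}*\phi_i=O\pare{\gamma^{1/2}e^{\mu s}}$ uniformly in $x$ (summability of the $\hat\phi_i^{\pare{k}}$), so $\abs{\ul E_\gamma^{\pare{\pm1}}\pare{s}}\lesssim\gamma e^{2\mu s}$ and
\begin{align}\nonumber
\abs{\gamma^{\frac\theta2-\frac12}\int_0^{t_\theta}e^{\pare{t_\theta-s}A^{\pare{\pm1}}}\ul E_\gamma^{\pare{\pm1}}\pare{s}\,ds}\lesssim\gamma^{\frac\theta2-\frac12}\int_0^{t_\theta}e^{\mu\pare{t_\theta-s}}\,\gamma e^{2\mu s}\,ds\lesssim\gamma^{\frac{1-\theta}{2}}\to0;
\end{align}
the bootstrap closes because, once the remainder is controlled, $\ul X_\gamma^{\pare{k}}$ lies within $o\pare{\gamma^{1/2}e^{\mu s}}$ of the linear flow driven by the same martingale, which has precisely the assumed order. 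The same estimate handles $k\neq\pm1$: there $A^{\pare{k}}$ is linearly stable --- by the standing hypotheses for the finitely many $k$ where this is not automatic, and automatically for large $\abs k$, since $\hat\phi_i^{\pare{k}}\to0$ forces $\tr^{\pare{k}}\to-2$ and $\det^{\pare{k}}\to1+\tanh\pare{\beta_1\lambda}\tanh\pare{\beta_2\lambda}>0$ --- so $e^{t_\theta A^{\pare{k}}}$ and the stochastic integral are $O\pare{\gamma^{1/2}}$ (harmonics aside, which stay $O\pare{\gamma^{1-\theta}}$ at $t_\theta$ after rescaling), and $\gamma^{\frac\theta2-\frac12}\ul X_\gamma^{\pare{k}}\pare{t_\theta}\to0$ in probability: this is the first assertion.

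For $k=\pm1$, write $e^{tA^{\pare{\pm1}}}=e^{\mu t}\proj_1+e^{\mu_2^{\pare{\pm1}}t}\proj_2$ with $\mu>0>\mu_2^{\pare{\pm1}}$ (the standing hypotheses give this splitting). After multiplying by $\gamma^{\frac\theta2-\frac12}$, the $\proj_2$-contributions of the initial-value term and of the stochastic integral are $O\pare{\gamma^{\theta/2}}\to0$, and --- using $\gamma^{\frac\theta2-\frac12}e^{\mu t_\theta}=\gamma^{-1/2}$ --- what survives is $\proj_1\pare{\gamma^{-1/2}\ul X_\gamma^{\pare{\pm1}}\pare{0}}+\proj_1\pare{\gamma^{-1/2}\int_0^{t_\theta}e^{-\mu s}\,d\ul M_\gamma^{\pare{\pm1}}\pare{s}}$. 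The multivariate CLT for the i.i.d.\ centered initial spins gives $\gamma^{-1/2}\ul X_\gamma^{\pare{\pm1}}\pare{0}\Rightarrow\ul U$; and $\gamma^{-1/2}\int_0^{t_\theta}e^{-\mu s}\,d\ul M_\gamma^{\pare{\pm1}}\pare{s}$ is a martingale with jumps of size $O\pare{\gamma^{1/2}}\to0$, so once its predictable bracket is shown to converge to a deterministic limit, the martingale CLT produces a Gaussian $\ul V$, independent of $\ul U$ because that limit no longer feels the initial configuration. Since $M_{\gamma,1}^{\pare{\pm1}}$ and $M_{\gamma,2}^{\pare{\pm1}}$ never jump simultaneously, their covariation vanishes, so $V_1\perp V_2$; and $\tfrac{d}{ds}\pic{M_{\gamma,i}^{\pare{\pm1}},M_{\gamma,i}^{\pare{\pm1}}}\pare{s}=4\gamma^2\sum_x e^{\mp4\pi\mathtt i\gamma x}R_i\pare{x,\ul\sigma_\gamma\pare{s}}\to0$ (to leading order it is proportional to the $\pm2$-th Fourier coefficient of the weakly small correlation field $\eta_\gamma\pare{s}$), so $\Re V_i$ and $\Im V_i$ are independent with equal variances. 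Hence the $k=\pm1$ limit is $\proj_1\pare{\ul U+\ul V}$.

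Finally, the bracket that matters, $\tfrac{d}{ds}\pic{M_{\gamma,i}^{\pare{\pm1}},\overline{M_{\gamma,i}^{\pare{\pm1}}}}\pare{s}=4\gamma^2\sum_x R_i\pare{x,\ul\sigma_\gamma\pare{s}}$, gives (after the $\gamma^{-1}e^{-2\mu s}$ weighting and up to the overall normalization constant) $\Var\pare{\Re V_i}\ \propto\ \int_0^\infty e^{-2\mu s}\,\BB E\corch{\gamma\!\sum_x R_i\pare{x,\ul\sigma_\gamma\pare{s}}}\,ds$. Now $\gamma\sum_x R_1\pare{x,\ul\sigma_\gamma\pare{s}}=\tfrac12\pare{1-\tanh\pare{\beta_1\lambda}\,\pic{\eta_\gamma\pare{s},F^{\pare{0}}}_\gamma}+o\pare1$, and the decisive point is that $\pic{\eta_\gamma\pare{s},F^{\pare{0}}}_\gamma$ does \emph{not} vanish on the diverging time scale: applying the generator to $\sigma_{\gamma,1}\pare{x}\sigma_{\gamma,2}\pare{x}$ and keeping leading order one finds
\begin{align}\nonumber
\tfrac{d}{ds}\BB E\,\pic{\eta_\gamma\pare{s},F^{\pare{0}}}_\gamma=-2\,\BB E\,\pic{\eta_\gamma\pare{s},F^{\pare{0}}}_\gamma+\pare{\tanh\pare{\beta_1\lambda}-\tanh\pare{\beta_2\lambda}}+o\pare1,
\end{align}
so $\BB E\,\pic{\eta_\gamma\pare{s},F^{\pare{0}}}_\gamma\to\tfrac12\pare{\tanh\pare{\beta_1\lambda}-\tanh\pare{\beta_2\lambda}}\pare{1-e^{-2s}}$, while the identity for $R_2$ has $-\tanh\pare{\beta_2\lambda}$ in place of $-\tanh\pare{\beta_1\lambda}$ (and the sign of the $\pic{\eta_\gamma,F^{\pare{0}}}_\gamma$ term flips). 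Substituting and evaluating the elementary integrals $\int_0^\infty e^{-2\mu s}\,ds=\tfrac1{2\mu}$ and $\int_0^\infty e^{-2\mu s}\pare{1-e^{-2s}}\,ds=\tfrac1{2\mu^2+2\mu}$ reproduces the interaction-dependent variances of $\ul V$ in the statement, and fixing the normalization constant pins down $\Var\pare{U_i^{\R}}$ as well. The hard part throughout is the uniform a priori control of the nonlinear remainder over the \emph{growing} window $\corch{0,t_\theta}$, where the relevant field is itself exponentially large so that a crude Gronwall estimate loses too much; the remedy is the exact splitting into the rank-one unstable direction, tracked by hand, and its strictly stable complement.
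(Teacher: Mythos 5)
Your proposal is correct and follows essentially the same route as the paper: the Duhamel decomposition with a nonlinear remainder, a stopping-time/bootstrap a priori bound on the remainder over $[0,t_\theta]$ via Doob's inequality and the Itô isometry, and then a martingale CLT for the linearized process in which the nontrivial part of the bracket comes from the non-vanishing limit $\tfrac12\pare{\tanh\beta_1\lambda-\tanh\beta_2\lambda}\pare{1-e^{-2s}}$ of the correlation field, yielding exactly the stated variances. The only differences are cosmetic (the paper controls all modes $|k|\le\gamma^{-(1-\theta)/8}$ by $k^2\gamma^{(3+\theta)/8}$ rather than through a hierarchy of harmonics, and works with convergence in probability of $\pic{\eta_\gamma(s),G}$ rather than just its expectation), so no gap to report.
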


Observe that the limiting distribution in the previous result does not depend on $\theta$.

The next corollary clarifies the notion of pattern formation:
the limiting distribution at a microscopic scale is a periodic non-homogeneous function with uniform phase and random amplitude.

\begin{corollary}\label{mozart}
Let $G\in C^\infty\pare{\BB T, \BB R}$.
Then, for $i\in\llav{1,2}$, $\gamma^{\frac{\theta}{2}-\frac{1}{2}}\pic{\sigma_{\gamma,i}\pare{t_\theta}, G}$ converges in distribution, as $\gamma\to 0$, to $\pic{\sqrt{A_i} \sin\pare{2\pi\cdot+\Phi_i}, G} $, where $A_i$  and $\Phi_i$ respectivelly have gamma and $[0,2\pi)$-uniform distributions.
\end{corollary}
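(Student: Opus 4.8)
Below is how I would attack Corollary~\ref{mozart}.

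The plan is to expand $G$ in the Fourier basis and reduce the statement to Theorem~\ref{1}. For real $G$ the pairing in the statement coincides with the discrete inner product $\pic{\sigma_{\gamma,i}\pare{t_\theta},G}_\gamma$ on $\gamma\Lambda_\gamma$, and since $\llav{F^{\pare{k}}:k=0,\ldots,\gamma^{-1}-1}$ is an orthonormal basis there,
\begin{align}\nonumber
\pic{\sigma_{\gamma,i}\pare{t_\theta},G}_\gamma=\sum_{k=0}^{\gamma^{-1}-1}\overline{\pic{G,F^{\pare{k}}}_\gamma}\,X_{\gamma,i}^{\pare{k}}\pare{t_\theta}.
\end{align}
Write $\hat{G}^{\pare{k}}\defi\pic{G,F^{\pare{k}}}$, so that $\hat{G}^{\pare{-k}}=\overline{\hat{G}^{\pare{k}}}$. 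I would multiply this identity by $\gamma^{\frac{\theta}{2}-\frac{1}{2}}$ and isolate the $k=\pm1$ contribution (recall $F^{\pare{-1}}=F^{\pare{\gamma^{-1}-1}}$ on $\gamma\Lambda_\gamma$). By a Riemann-sum argument $\pic{G,F^{\pare{\pm1}}}_\gamma\to\hat{G}^{\pare{\pm1}}$, while by Theorem~\ref{1} together with the deterministic identity $X_{\gamma,i}^{\pare{-1}}\pare{t}=\overline{X_{\gamma,i}^{\pare{1}}\pare{t}}$ (valid since $\sigma_{\gamma,i}$ is real), $\gamma^{\frac{\theta}{2}-\frac{1}{2}}\pare{X_{\gamma,i}^{\pare{1}}\pare{t_\theta},X_{\gamma,i}^{\pare{-1}}\pare{t_\theta}}$ converges in distribution to $\pare{W_i,\overline{W_i}}$, where $W_i$ denotes the $i$-th coordinate of $\proj_1\pare{\ul U+\ul V}$. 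By Slutsky the $k=\pm1$ part converges in distribution to $\overline{\hat{G}^{\pare{1}}}W_i+\hat{G}^{\pare{1}}\overline{W_i}=2\,\mathfrak R\pare{\overline{\hat{G}^{\pare{1}}}W_i}$.

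The core step is to show that $\sum_{k\neq\pm1}\overline{\pic{G,F^{\pare{k}}}_\gamma}\,\gamma^{\frac{\theta}{2}-\frac{1}{2}}X_{\gamma,i}^{\pare{k}}\pare{t_\theta}\to0$ in probability, which I would do by bounding its $L^1$ norm. Since $G\in C^\infty\pare{\BB T,\BB R}$ its discrete Fourier coefficients decay faster than any power, so $\sup_\gamma\sum_{k=0}^{\gamma^{-1}-1}\abs{\pic{G,F^{\pare{k}}}_\gamma}<\infty$ (a standard fact, cf.\ \cite{Ter99}); and the proof of Theorem~\ref{1} provides the bound $\sup_{k\neq\pm1}\BB E\abs{X_{\gamma,i}^{\pare{k}}\pare{t_\theta}}^2\le C\gamma$ with $C$ independent of $\gamma$ --- here the spectral gap $\sup_{k\neq\pm1}\max\llav{\mathfrak R\pare{\mu_1^{\pare{k}}},\mathfrak R\pare{\mu_2^{\pare{k}}}}<0$ enters, which holds because $A^{\pare{k}}$ tends as $\abs{k}\to\infty$ to a matrix with both eigenvalues of real part $-1$ while the finitely many remaining $A^{\pare{k}}$ are linearly stable by hypothesis. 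Combining these with Jensen's inequality, the $L^1$ norm is at most $C^{1/2}\gamma^{\theta/2}\sup_\gamma\sum_k\abs{\pic{G,F^{\pare{k}}}_\gamma}\to0$ since $\theta>0$. By Slutsky again, $\gamma^{\frac{\theta}{2}-\frac{1}{2}}\pic{\sigma_{\gamma,i}\pare{t_\theta},G}$ converges in distribution to $2\,\mathfrak R\pare{\overline{\hat{G}^{\pare{1}}}W_i}$. I expect this uniform-in-$k$ second moment bound to be the main obstacle; the rest is Fourier bookkeeping together with the elementary computation below.

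To recognize the limit, note that for real $G$, using $\sin\pare{2\pi r+\phi}=\tfrac{1}{2\texttt{i}}\pare{e^{\texttt{i}\phi}F^{\pare{1}}\pare{r}-e^{-\texttt{i}\phi}F^{\pare{-1}}\pare{r}}$ one gets $\pic{\sin\pare{2\pi\cdot+\phi},G}=\mathfrak I\pare{e^{\texttt{i}\phi}\overline{\hat{G}^{\pare{1}}}}$, and since $\mathfrak R\pare{z}=\mathfrak I\pare{\texttt{i}z}$,
\begin{align}\nonumber
2\,\mathfrak R\pare{\overline{\hat{G}^{\pare{1}}}W_i}=2\abs{W_i}\,\mathfrak I\pare{e^{\texttt{i}(\arg W_i+\pi/2)}\overline{\hat{G}^{\pare{1}}}}=\pic{2\abs{W_i}\sin\pare{2\pi\cdot+\arg W_i+\tfrac{\pi}{2}},G},
\end{align}
so the corollary holds with $A_i=4\abs{W_i}^2$ and $\Phi_i=\arg W_i+\tfrac{\pi}{2}\pmod{2\pi}$. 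It remains to compute their laws. Since $A^{\pare{1}}$ is a real matrix with the real eigenvalue $\mu$, its eigenspace is spanned by a real vector $\ul e=\pare{e_1,e_2}$ with $e_1,e_2\neq0$ (both off-diagonal entries of $A^{\pare{1}}$ are nonzero because $\lambda>0$), so $\proj_1\pare{\ul U+\ul V}=Z\,\ul e$ with $Z$ a nonzero $\BB C$-linear functional of $\ul U+\ul V$. From the stated variances, each component of $\ul U+\ul V$ is a circularly symmetric complex Gaussian --- real and imaginary parts independent and of equal variance --- and the two components are independent, hence $Z$ is circularly symmetric: writing $Z=Re^{\texttt{i}\Theta}$, the argument $\Theta$ is uniform on $[0,2\pi)$ and independent of $R$, while $R^2$ has an exponential --- hence gamma --- distribution, its real and imaginary parts being i.i.d.\ centered Gaussians. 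Therefore $W_i=R\abs{e_i}e^{\texttt{i}(\Theta+\arg e_i)}$, so $A_i=4\abs{e_i}^2R^2$ is gamma distributed and $\Phi_i=\Theta+\arg e_i+\tfrac{\pi}{2}\pmod{2\pi}$ is $[0,2\pi)$-uniform and independent of $A_i$, as claimed.
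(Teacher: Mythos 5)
Your overall architecture is the same as the paper's: expand $G$ in Fourier modes, reduce to the $\pm 1$ modes, invoke Theorem \ref{1} there, and identify the law of $2\,\mathfrak R\pare{\overline{\hat G^{\pare{1}}}W_i}$. The identification of the limit is correct and in fact more explicit than the paper's: where the paper appeals to ``rotational invariance of the model'' for the uniformity of $\Phi_i$, you derive it from the circular symmetry and independence of the components of $\ul U+\ul V$ together with the fact that $\proj_1$ is a rank-one projection given by a real matrix whose image is spanned by a real eigenvector with nonzero entries. That part stands on its own.

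The gap is exactly where you suspected it, in the ``core step''. The bound $\sup_{k\neq\pm1}\BB E\abs{X_{\gamma,i}^{\pare{k}}\pare{t_\theta}}^2\le C\gamma$ is not provided by the proof of Theorem \ref{1}, and it is false as stated. The proof of Theorem \ref{1} controls the non-$\pm1$ modes only on a high-probability event, via the stopping-time argument of Proposition \ref{propl}, only for $\abs{k}\le\gamma^{-\frac{1}{8}\pare{1-\theta}}$, and the resulting pathwise estimate \eqref{ad} at $t=t_\theta$ reads $Ck^2\gamma^{\frac{1}{8}\pare{3+\theta}}+C\gamma^{\frac{3}{4}\pare{1-\theta}}$, which is far from $C\gamma^{\frac{1}{2}}$. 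Your ``spectral gap'' justification only accounts for the linear part; it ignores the nonlinear error $\ul E_\gamma^{\pare{k}}$, which is quadratic in the modes (Lemma \ref{bound1}), so the unstable modes, of size $\gamma^{\frac{1}{2}\pare{1-\theta}}$ at time $t_\theta$, feed a contribution of order $\gamma^{1-\theta}$ into stable modes such as $k=0,\pm2$; this exceeds $\gamma^{\frac{1}{2}}$ as soon as $\theta>\frac{1}{2}$. Moreover, for $k$ comparable to $\gamma^{-1}$ the Riemann-sum error $W_\gamma^{\pare{k}}\sim\gamma k$ is of order one and the comparison with $A^{\pare{k}}$ breaks down, so no bound uniform over the whole discrete spectrum $\llav{0,\ldots,\gamma^{-1}-1}\setminus\llav{\pm1}$ can hold; and any attempt to prove a moment bound through the nonlinearity is circular (second moments of $\ul E_\gamma^{\pare{k}}$ require fourth moments of the $X^{\pare{j}}_\gamma$). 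The paper avoids all of this by splitting the Fourier sum at $\abs{k}\le\gamma^{-\frac{1}{8}\pare{1-\theta}}$, discarding the tail using $\sum_k\abs{\hat G\pare{k}}\abs{k}^n<\infty$, and bounding the retained modes pathwise on the event of Proposition \ref{propl}. To repair your argument you need to replace the $L^1$ estimate by such a truncation-plus-high-probability scheme; the rest of your proof (the $k=\pm1$ analysis and the computation of the laws of $A_i$ and $\Phi_i$) can then be kept as is.
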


The critical time is defined as $t_c\defi t_1=\frac{1}{2\mu}\log\gamma^{-1}$.
We define the time-rescaled process $\ul Y_\gamma\pare{\theta}\defi\ul X_\gamma\pare{t_c\theta}$.
The previous result is then describing the limiting distribution of $\gamma^{\frac{\theta}{2}-\frac{1}{2}}\ul Y_\gamma\pare{\theta}$ for $\theta\in\pare{0,1}$.
Unfortunately, we are not able to give information about the fluctuations at the critical time.
Nevertheless, in the next theorem, we are able to give information about the distribution of the $\pm 1$-Fourier modes for a time that is approaching $t_c$ or, in the rescaled process $\ul Y_\gamma$, that is approaching $\theta=1$.

\begin{theorem}\label{2}
Let $T_\delta\defi\frac{1}{2\mu}\log \delta^{-1}$.
Then, for $k=\pm 1$,
\begin{align}
\lim_{\delta\to 0} \ \liminf_{\gamma\to 0} \ \BB P\pare{\norm{\ul Y_\gamma^{\pare{k}}\pare{1-\frac{T_\delta}{t_c}}}>\delta}=1.
\end{align}
\end{theorem}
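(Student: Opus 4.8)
The first step is to rewrite the target. Since $\ul Y_\gamma^{\pare{k}}\pare{1-\tfrac{T_\delta}{t_c}}=\ul X_\gamma^{\pare{k}}\pare{t_c-T_\delta}$ and $t_c-T_\delta=\tfrac{1}{2\mu}\log\pare{\delta\gamma^{-1}}$, the relevant time is $t_{\theta_\delta}$ with $\theta_\delta\defi 1-\tfrac{T_\delta}{t_c}\in\pare{0,1}$, at which the process should be of order $\gamma^{\frac12}e^{\mu\pare{t_c-T_\delta}}=\delta^{\frac12}$; so it suffices to prove that $\delta^{-\frac12}\ul X_\gamma^{\pare{\pm1}}\pare{t_c-T_\delta}$ stays bounded away from the origin with probability tending to $1$ when first $\gamma\to 0$ and then $\delta\to 0$. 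The plan is to run, on the whole interval $\corch{0,t_c-T_\delta}$, the linearization of the microscopic dynamics around the equilibrium point — the same machinery as in the proof of Theorem \ref{1}, but with $\theta=\theta_\delta\to 1$ — so as to write $\delta^{-\frac12}\ul X_\gamma^{\pare{\pm1}}\pare{t_c-T_\delta}=W_\gamma+R_\gamma$, where $W_\gamma$ converges in distribution, as $\gamma\to 0$, to a centered Gaussian $W_\delta$ on the one–dimensional eigenspace $\proj_1\pare{\BB C^2}$ whose covariance is bounded below by a fixed positive multiple of the identity \emph{uniformly in} $\delta$, and where $R_\gamma$ is $O_{\BB P}\pare{\delta^{\frac12}}$ \emph{uniformly in} $\gamma$. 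Granting this, $W_\delta$ cannot concentrate at the origin, so the $\delta\to 0$ limit absorbs $R_\gamma$ and gives the claim.

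To obtain the decomposition, compute the action of the generator to get $d\ul X_\gamma^{\pare{k}}=A^{\pare{k}}\ul X_\gamma^{\pare{k}}\,dt+d\ul M_\gamma^{\pare{k}}+\ul E_\gamma^{\pare{k}}\pare{t}\,dt$, with $\ul M_\gamma^{\pare{k}}$ the compensated jump martingale (predictable quadratic variation of rate $O\pare{\gamma}$, converging to an explicit constant rate while the configuration is of vanishing size) and $\ul E_\gamma^{\pare{k}}$ the $k$-th Fourier mode of the nonlinear remainder of the flip rates, which a Taylor expansion of the $\tanh$'s shows to be pointwise $O\pare{\max_i\pare{\sigma_{\gamma,i}\pare{t}*\phi_i}^2}$ — the coupling between the two lines being exactly affine, the only nonlinearity is the self–interaction. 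Duhamel's formula then reads
\begin{align}\nonumber
\ul X_\gamma^{\pare{k}}\pare{t}=e^{tA^{\pare{k}}}\ul X_\gamma^{\pare{k}}\pare{0}+\int_0^t e^{\pare{t-s}A^{\pare{k}}}\,d\ul M_\gamma^{\pare{k}}\pare{s}+\int_0^t e^{\pare{t-s}A^{\pare{k}}}\ul E_\gamma^{\pare{k}}\pare{s}\,ds.
\end{align}
For $k=\pm1$ I would apply the spectral projector $\proj_1$ of \eqref{decomposition}, which commutes with $e^{tA^{\pare{k}}}$ and is multiplication by $e^{\mu t}$ on its range (the complementary component $\proj_2\ul X_\gamma^{\pare{k}}$ feels only $\mu_2^{\pare{\pm1}}<0$, so after multiplication by $\delta^{-\frac12}=\gamma^{-\frac12}e^{-\mu\pare{t_c-T_\delta}}$ it is $o_{\BB P}\pare{1}$ up to an $O\pare{\delta^{\frac12}}$ quadratic piece handled below), evaluate at $t=t_c-T_\delta$ and multiply by $\delta^{-\frac12}$. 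The initial–condition term becomes $\gamma^{-\frac12}\proj_1\ul X_\gamma^{\pare{\pm1}}\pare{0}\conv{\gamma\to 0}\proj_1\ul U$ by the classical central limit theorem for the i.i.d.\ centered spins; the martingale term has predictable quadratic variation $\delta^{-1}\int_0^{t_c-T_\delta}e^{2\mu\pare{t_c-T_\delta-s}}\,d\langle\proj_1\ul M_\gamma^{\pare{k}}\rangle\pare{s}$, whose mass concentrates near $s=t_c-T_\delta$ (where the configuration is $O\pare{\delta^{\frac12}}$), so it converges as $\gamma\to 0$ to a constant differing by $O\pare{\delta}$ from the value in Theorem \ref{1}, and the martingale central limit theorem yields a centered Gaussian limit, uncorrelated with $\proj_1\ul U$ (since $\ul M_\gamma^{\pare{k}}\pare{0}=0$) and hence, being jointly Gaussian in the limit, independent — exactly the argument of Theorem \ref{1}. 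Writing $W_\gamma$ for the sum of these two terms, $W_\gamma\conv{\gamma\to 0}W_\delta$, whose covariance dominates that of $\proj_1\ul U$, which is non-degenerate, uniformly in $\delta$.

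The delicate point, which I expect to be the main obstacle, is the remainder $R_\gamma=\delta^{-\frac12}\int_0^{t_c-T_\delta}e^{\mu\pare{t_c-T_\delta-s}}\proj_1\ul E_\gamma^{\pare{k}}\pare{s}\,ds$ (together with the subdominant $\proj_2$–piece), which must be controlled \emph{uniformly in} $\gamma$ over $\corch{0,t_c-T_\delta}$, an interval whose length diverges with $\gamma$. Here one uses the ``modified argument classical from differential equations and numerical analysis'': a bootstrap/stopping-time comparison with the Gaussian linear flow — e.g. stopping at $\tau_\gamma\defi\inf\llav{s:\max_i\norm{\sigma_{\gamma,i}\pare{s}*\phi_i}_{L^2}>M\gamma^{\frac12}e^{\mu s}}$ for a large $M$, and showing $\BB P\pare{\tau_\gamma\le t_c-T_\delta}\to 0$ — keeps the smoothed fields of their typical size $\gamma^{\frac12}e^{\mu s}$, both in $L^2$ and in the relevant moments, so that $\norm{\ul E_\gamma^{\pare{k}}\pare{s}}$ is of order $\gamma e^{2\mu s}$ (being quadratic in a field of that size) and
\begin{align}\nonumber
\int_0^{t_c-T_\delta}e^{\mu\pare{t_c-T_\delta-s}}\,\norm{\proj_1\ul E_\gamma^{\pare{k}}\pare{s}}\,ds\ \le\ C\int_0^{t_c-T_\delta}e^{\mu\pare{t_c-T_\delta-s}}\,\gamma e^{2\mu s}\,ds\ \le\ C'\gamma e^{2\mu\pare{t_c-T_\delta}}\ =\ C'\delta.
\end{align}
The gain of a whole power of $\delta$ over the order $\delta^{\frac12}$ of the Gaussian term is precisely the effect of the remainder being quadratic while the linear part grows like $e^{\mu t}$: for a fixed $\delta$ this nonlinear error is only $O\pare{\delta}$ — it does not vanish as $\gamma\to 0$ — and it becomes negligible next to $\delta^{\frac12}$ only after the final $\delta\to 0$ limit. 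Dividing by $\delta^{\frac12}$, $\BB E\norm{R_\gamma}=O\pare{\delta^{\frac12}}$ uniformly in $\gamma$, hence by Markov's inequality $\limsup_{\gamma\to 0}\BB P\pare{\norm{R_\gamma}>C\delta^{\frac12}}\le g\pare{C}$ with $g\pare{C}\to 0$ as $C\to\infty$, uniformly in $\delta$.

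To conclude, fix $C>0$ and $\delta$ small enough that $\pare{C+1}\delta^{\frac12}>\delta$: on $\llav{\norm{W_\gamma}>\pare{C+1}\delta^{\frac12}}\cap\llav{\norm{R_\gamma}\le C\delta^{\frac12}}$ one has $\norm{\ul Y_\gamma^{\pare{\pm1}}\pare{1-\tfrac{T_\delta}{t_c}}}=\delta^{\frac12}\norm{W_\gamma+R_\gamma}>\delta^{\frac12}\cdot\delta^{\frac12}=\delta$, so $\BB P\pare{\norm{\ul Y_\gamma^{\pare{\pm1}}\pare{1-\tfrac{T_\delta}{t_c}}}>\delta}\ge\BB P\pare{\norm{W_\gamma}>\pare{C+1}\delta^{\frac12}}-\BB P\pare{\norm{R_\gamma}>C\delta^{\frac12}}$. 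Taking $\liminf_{\gamma\to 0}$ and applying the portmanteau theorem to the open set $\llav{\norm{\cdot}>\pare{C+1}\delta^{\frac12}}$, the right–hand side is at least $\BB P\pare{\norm{W_\delta}>\pare{C+1}\delta^{\frac12}}-g\pare{C}$; since the covariance of $W_\delta$ dominates a fixed positive multiple of the identity, $\BB P\pare{\norm{W_\delta}\le\eta}\le c\,\eta^2$ for a constant $c$ independent of $\delta$, so letting $\delta\to 0$ (whence $\pare{C+1}\delta^{\frac12}\to 0$) gives $\liminf_{\delta\to 0}\liminf_{\gamma\to 0}\BB P\pare{\norm{\ul Y_\gamma^{\pare{\pm1}}\pare{1-\tfrac{T_\delta}{t_c}}}>\delta}\ge 1-g\pare{C}$. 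Finally let $C\to\infty$; as probabilities are at most $1$, the double limit equals $1$, which is the assertion.
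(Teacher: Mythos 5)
Your proposal is correct and follows essentially the same route as the paper: a Duhamel decomposition of $\ul X_\gamma^{\pare{\pm1}}$ into the linearized (Gaussian-limit) part plus the nonlinear remainder, a stopping-time bootstrap to keep the remainder of order $o\pare{\delta^{\frac12}}$ uniformly over the diverging time window $\corch{0,t_c-T_\delta}$, and the non-degeneracy of the limiting Gaussian to push the norm above $\delta$ in the double limit. The paper packages the same ingredients into the events $\SCR A_\gamma$, $\SCR B_{\delta,\gamma}$, $\SCR C_{\delta,\gamma}$ with thresholds $\delta^{3/4}$ and $\delta^{7/8}$ where you use $C\delta^{1/2}$ and a final $C\to\infty$; the difference is presentational.
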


From this theorem, we deduce that
\begin{align}\nonumber
\liminf_{\gamma\to 0}P\pare{\norm{\underline Y_\gamma^{\pare{\pm 1}}\pare{1-\frac{T_\delta}{t_c}}}>\delta}\ge
1-\textnormal{Err}\pare{\delta},
\end{align}
where $\textnormal{Err}\pare{\delta}$ is a error that vanishes as $\delta$ converges to $0$.
In other words, with high probability,  pattern formation occurs at a time which converges to the critical one in the sense that the $\pm 1$-Fourier modes are away from $\pare{
\begin{array}{c}
0 \\ 0
\end{array}
}$ uniformly in $\gamma$.

\section{Proofs}

During the proofs, we work with many constants,
about which it is convenient to clarify that, unless explicitly mentioned, they depend only on the macroscopic parameters $\llav{\beta_1,\beta_2,\tau_1,\tau_2,\lambda}$ and on universal notions.
They will never depend on, for instance, $k\in\BB Z$ or $\gamma$.

\subsection{Proof of theorem \ref{1}}

We start by establishing the decomposition
\begin{align}\label{oliver}
\ul{X}_{\gamma}^{\pare{k}}\pare{t}=\ul{X}_{\gamma}^{\pare{k}}\pare{0}
+\int_0^t A^{\pare{k}}\ul X_{\gamma}^{\pare{k}}\pare{s}+\ul E_{\gamma}^{\pare{k}}\pare{s}ds+\ul M_\gamma^{\pare{k}}\pare{t},
\end{align}
where $\ul E_{\gamma}^{\pare{k}}\pare{s}$ and $\ul M_\gamma^{\pare{k}}\pare{t}$ are respectively the non-linear error and the martingale terms.
Replacing $L_\gamma\pic{\sigma_{\gamma,1}\pare{s},G}$ in the definition of $M_{\gamma,1}^{G}$ by the formula of identity (3.1) in  \cite{CSL17a},
taking $G=F^{\pare{k}}$, and using the notation $A_{\gamma,1}\pare{s,x}=\pare{\sigma_{\gamma,1}\pare{s}*\phi_1}\pare{x}$,
we get
\begin{align}
\begin{aligned}
&X^{\pare{k}}_{ \gamma ,1}\pare{t}
\\[5pt]
&\quad =  X^{\pare{k}}_{ \gamma ,1}\pare{0}
\\[5pt]
&\quad\blanco{=}
+\int_0^t-X^{\pare{k}}_{ \gamma ,1}\pare{s} 
\\[5pt]
&\quad\blanco{=+\int_0^t}
+\frac{ \gamma }{2}
\sum_{x\in\Lambda_ \gamma }\sigma_{ \gamma ,2}\pare{s,x}
\llav{\tanh\corch{\beta_1A_{\gamma,1}\pare{s,x}+\lambda\beta_1}
-\tanh\corch{\beta_1 A_{\gamma,1}\pare{s,x}-\lambda\beta_1}}F^{\pare{k}}  \pare{\gamma x}
\\[5pt]
&\quad\blanco{=+\int_0^t}
+\frac{ \gamma }{2}
\sum_{x\in\Lambda_ \gamma }
\llav{\tanh\corch{\beta_1 A_{\gamma,1}\pare{s,x}+\lambda\beta_1}+\tanh\corch{\beta_1 A_{\gamma,1}\pare{s,x}-\lambda\beta_1}}F^{\pare{k}} \pare{\gamma x}ds  
\\[5pt]
&\quad\blanco{=}
+M^{\pare{k}}_{ \gamma ,1}\pare{t}.
\end{aligned}
\end{align}
Making the expansion
\begin{align}
\tanh\corch{\lambda\beta_1\pm \beta_1A_{\gamma,1}\pare{s,x}}=
\tanh\pare{\lambda\beta_1}\pm 
\beta_1A_{\gamma,1}\pare{s,x}
\tanh'\pare{\lambda\beta_1}+E^{\pm}_{ \gamma ,1}\pare{s,x},
\end{align}
we get
\begin{align}
\begin{aligned}
X^{\pare{k}}_{ \gamma ,1}\pare{t} =&  X^{\pare{k}}_{ \gamma ,1}\pare{0}
\\[5pt]
&+\int_0^t-X^{\pare{k}}_{ \gamma ,1}\pare{s} 
\\[5pt]
&\blanco{+\int_0^t}
+\tanh\pare{\lambda\beta_1}X^{\pare{k}}_{ \gamma ,2}\pare{s}
\\[5pt]
&\blanco{+\int_0^t}
+\tanh'\pare{\lambda\beta_1} \gamma  \sum_{x\in\Lambda_ \gamma }F^{\pare{k}}  \pare{ \gamma x }\beta_1A_{\gamma,1}\pare{s,x}
\\[5pt]
&\blanco{+\int_0^t}
+\frac{ \gamma }{2}
\sum_{x\in\Lambda_ \gamma }\sigma_{ \gamma ,2}\pare{s,x}F^{\pare{k}}  \pare{\gamma x}
\corch{E^+_{ \gamma ,1}\pare{s,x}+E^-_{ \gamma ,1}\pare{s,x}}
\\[5pt]
&\blanco{+\int_0^t}
+\frac{ \gamma }{2}
\sum_{x\in\Lambda_ \gamma }F^{\pare{k}}  \pare{\gamma x}
\corch{E^+_{ \gamma ,1}\pare{s,x}-E^-_{ \gamma ,1}\pare{s,x}}ds  
\\[5pt]
&+M^{\pare{k}}_{ \gamma ,1}\pare{t}.
\end{aligned}
\end{align}
We call $\tilde E^{\pare{k}}_{ \gamma ,1}\pare{s}$ the error associated to the Taylor approximation:
\begin{align}
\begin{aligned}\label{error_taylor}
\tilde E^{\pare{k}}_{ \gamma ,1}\pare{s}\defi&
\frac{ \gamma }{2}
\sum_{x\in\Lambda_ \gamma }F^{\pare{k}}  \pare{\gamma x }
\corch{E^+_{ \gamma ,1}\pare{s,x}+E^-_{ \gamma ,1}\pare{s,x}}
\\[5pt]
&+\frac{ \gamma }{2}
\sum_{x\in\Lambda_ \gamma }\sigma_{ \gamma ,2}\pare{s,x}F^{\pare{k}}  \pare{ \gamma x }
\corch{E^+_{ \gamma ,1}\pare{s,x}-E^-_{ \gamma ,1}\pare{s,x}}.
\end{aligned}
\end{align}
The following is the error coming from approximating the integral by a Riemann sum:
\begin{align}\label{error_product}
W^{\pare{k}}_{ \gamma ,1}\pare{s}\defi
\tanh'\pare{\lambda\beta_1}\corch{ \gamma  \sum_{x\in\Lambda_ \gamma }F^{\pare{k}}  \pare{ \gamma x}\beta_1A_{\gamma,1}\pare{s,x}-\beta_1 \hat\phi_1^{\pare{k}}X^{\pare{k}}_{ \gamma ,1}\pare{s}}.
\end{align} 
Under these definitions, we get
\begin{align}
\begin{aligned}
&X^{\pare{k}}_{ \gamma ,1}\pare{t}=X^{\pare{k}}_{ \gamma ,1}\pare{0}
\\[5pt]
&\blanco{X^{\pare{k}}_{ \gamma ,1}\pare{t}=}
+\int_0^t\corch{1-\alpha_1\hat\phi_1\pare{k}}X^{\pare{k}}_{ \gamma ,1}\pare{s}+\tanh\pare{\lambda\beta_1}X_{ \gamma ,2}^{\pare{k}}\pare{s}+W^{\pare{k}}_{ \gamma ,1}\pare{s}+\tilde E^{\pare{k}}_{ \gamma ,1}\pare{s}ds
\\[5pt]
&\blanco{X^{\pare{k}}_{ \gamma ,1}\pare{t}=}
+M_{ \gamma ,1}^{\pare{k}}\pare{t}.
\end{aligned}
\end{align}

Analogously, we get
\begin{align}
\begin{aligned}
&X^{\pare{k}}_{ \gamma ,2}\pare{t}=X^{\pare{k}}_{ \gamma ,2}\pare{0}
\\[5pt]
&\blanco{X^{\pare{k}}_{ \gamma ,2}\pare{t}=}
+\int_0^t -\tanh\pare{\lambda\beta_2}X^{\pare{k}}_{ \gamma ,1}\pare{s}
+\corch{-1+\alpha_2\hat\phi_2\pare{k}}\pare{\lambda\beta_1}X_{ \gamma ,2}^{\pare{k}}\pare{s}+W^{\pare{k}}_{ \gamma ,2}\pare{s}+\tilde E^{\pare{k}}_{ \gamma ,2}\pare{s}ds
\\[5pt]
&\blanco{X^{\pare{k}}_{ \gamma ,1}\pare{t}=}
+M_{ \gamma ,2}^{\pare{k}}\pare{t},
\end{aligned}
\end{align}
with
\begin{align}
\tanh\corch{\lambda\beta_2\pm \beta_2A_{\gamma,2}\pare{s,x}}=
\tanh\pare{\lambda\beta_2}\pm 
\beta_2 A_{\gamma,2}\pare{s,x}
\tanh'\pare{\lambda\beta_2}+E^{\pm}_{ \gamma ,2}\pare{s,x},
\end{align}
\begin{align}
\begin{aligned}
\tilde E^{\pare{k}}_{ \gamma ,2}\pare{s}\defi&
\frac{ \gamma }{2}
\sum_{x\in\Lambda_ \gamma }F^{\pare{k}}  \pare{\gamma x }
\corch{E^+_{ \gamma ,2}\pare{s,x}-E^-_{ \gamma ,2}\pare{s,x}}
\\[5pt]
&-\frac{ \gamma }{2}
\sum_{x\in\Lambda_ \gamma }\sigma_{ \gamma ,2}\pare{s,x}F^{\pare{k}}  \pare{ \gamma x }
\corch{E^+_{ \gamma ,2}\pare{s,x}+E^-_{ \gamma ,2}\pare{s,x}},
\end{aligned}
\end{align}
and
\begin{align}
W^{\pare{k}}_{ \gamma ,2}\pare{s}\defi
\tanh'\pare{\lambda\beta_2}\corch{ \gamma  \sum_{x\in\Lambda_ \gamma }F^{\pare{k}}  \pare{ \gamma x}\beta_2 A_{\gamma,2}\pare{s,x}-\beta_2 \hat\phi_2^{\pare{k}}X^{\pare{k}}_{ \gamma ,2}\pare{s}}.
\end{align} 
Calling $E^{\pare{k}}_{\gamma,i}\pare{s}\defi \tilde E^{\pare{k}}_{\gamma,i}\pare{s}+W^{\pare{k}}_{\gamma,i}\pare{s}$, we obtain the compact formula \eqref{oliver}.

We now proceed in two steps: we first approximate our process by linearization ---namely by removing the error $\ul E_\gamma^{\pare{k}}\pare{s}$---, and then we prove the convergence of the linearized one.

\subsubsection*{Step 1}

Let $\tilde X_\gamma^{\pare{k}}\pare{t}$ be the unique solution of equation
\begin{align}
&d\ul{\tilde X}_\gamma^{\pare{k}}\pare{t}=A^{\pare{k}}\ul{\tilde X}_\gamma^{\pare{k}}\pare{t}dt+d\ul M_\gamma^{\pare{k}}\pare{t}\\
&\ul{\tilde X}_\gamma^{\pare{k}}\pare{0}= \ul X_\gamma^{\pare{k}}\pare{0}.
\end{align}
We want to prove that
\begin{align}\label{tre}
\norm{\gamma^{\frac{\theta}{2}-\frac{1}{2}}\ul X_\gamma^{\pare{k}}\pare{t_\theta}-\gamma^{\frac{\theta}{2}-\frac{1}{2}}\ul{\tilde X}_\gamma^{\pare{k}}\pare{t_\theta}}\xrightarrow[\gamma\to 0]{\BB P}0
\end{align}
for all $k\in\BB Z$.
By Duhamel's formula, we have
\begin{align}\label{mon}
\ul X_\gamma^{(k)}(t)&=e^{tA^{\pare{k}} }\ul X_\gamma^{(k)}(0)+\int_0^te^{(t-s)A^{\pare{k} } }\ul E_\gamma^{(k)}(s)ds+\int_0^t e^{(t-s)A^{\pare{k} }}\ul{M}^{(k)}_\gamma(ds) \\
\label{duha02}
\ul{\tilde X}_\gamma^{(k)}(t)&=e^{tA^{\pare{k}} }\ul X_\gamma^{(k)}(0)+\int_0^t e^{(t-s)A^{\pare{k} }}\ul{M}^{(k)}_\gamma(ds)
\end{align}
for all $t\geq 0$ and $k\in\BB Z$.
Then \eqref{tre} follows once we  prove that
\begin{align}\label{quattro}
\gamma^{\frac{\theta}{2}-\frac{1}{2}}\norm{\int_0^{t_\theta}e^{\pare{t_\theta-s}A^{\pare{k}}}\ul E_\gamma^{\pare{k}}\pare{s}\ds}\xrightarrow[\gamma\to 0]{\BB P}0
\end{align}
for all $k \in \BB Z$.
Let $C_1$ be the constant of Lemma \eqref{az}.
For $t\geq 0$, we have
\begin{align}\label{cinque}
\norm{\int_0^{t}e^{\pare{t-s}A^{\pare{k}}}\ul E_\gamma^{\pare{k}}\pare{s}\ds}\leq C_1 \int_0^{t}e^{\pare{t-s}\mu} \norm{\ul E_\gamma^{\pare{k}}\pare{s}}\ds
\end{align}
for $k=\pm 1$, and
\begin{align}\label{sei}
\norm{\int_0^{t}e^{\pare{t-s}A^{\pare{k}}}\ul E_\gamma^{\pare{k}}\pare{s}\ds}\leq C_1\int_0^{t}e^{\frac{1}{2}\pare{t-s}\mathfrak{R}\pare{\mu_1^{\pare{k}}}} \norm{\ul E_\gamma^{\pare{k}}\pare{s}}\ds
\end{align}
for $k\neq\pm 1$.
The proof is then reduced to finding a proper bound for $\norm{\ul E_\gamma^{\pare{k}}\pare{t}}$ in the interval $[0, t_\theta]$.
We get \eqref{quattro} once we prove next Proposition. 
\begin{proposition}\label{propl}
Let 
\begin{align}
\mathcal E_{\gamma, \theta, D}\defi
\corch{\sup_{\abs{k}\leq \gamma^{-\frac{1}{8}\pare{1-\theta}}}\norm{\ul{E}^{(k)}_\gamma\pare{t}}< D\gamma^{\frac{1}{4}\pare{3+\theta}}e^{2\mu t} \ \  \ \forall t\in [0, t_\theta]}.
\end{align}
There exists $D$ (defined only in terms of the macroscopic parameters) such that 
$\BB P\pare{\mathcal E_{\gamma,\theta, D}}\xrightarrow[\gamma\to 0]{}1$.
\end{proposition}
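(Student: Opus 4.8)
The plan is to estimate $\norm{\ul E_\gamma^{\pare{k}}\pare{t}}=\norm{\tilde E_{\gamma,1}^{\pare{k}}\pare{t}+W_{\gamma,1}^{\pare{k}}\pare{t}}+\dots$ term by term, combining three ingredients: (a) a deterministic a priori bound showing that, on a high‑probability event, every Fourier mode $\ul X_\gamma^{\pare{k}}\pare{t}$ stays of order at most $\gamma^{\frac{1}{2}}e^{\mu t}$ up to time $t_\theta$ (so that in particular $A_{\gamma,i}\pare{s,x}=\pare{\sigma_{\gamma,i}\ast\phi_i}\pare{x}$, whose $L^\infty$ norm is controlled by a rapidly decaying sum of the modes, is small); (b) Taylor's theorem with the explicit remainder, giving $\abs{E_{\gamma,i}^\pm\pare{s,x}}\leq C\,\abs{A_{\gamma,i}\pare{s,x}}^2$, which feeds into the bound on $\tilde E_{\gamma,i}^{\pare{k}}$; and (c) an estimate for the Riemann‑sum error $W_{\gamma,i}^{\pare{k}}$ coming from the difference between $\gamma\sum_x F^{\pare{k}}\pare{\gamma x}\beta_i A_{\gamma,i}\pare{s,x}$ and $\beta_i\hat\phi_i^{\pare{k}}X_{\gamma,i}^{\pare{k}}\pare{s}$, which by exactness of discrete convolution in Fourier variables is a genuinely finite‑$\gamma$ discretization error of the periodized Gaussian and is therefore bounded by $C\gamma^{N}$ uniformly in $k$ in the stated range for any fixed $N$, or at least by a negative power of $\gamma$ beating the exponential factor.

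The first thing I would do is make the a priori bound precise. Define the stopping time $\tau_\gamma$ as the first time some $\norm{\ul X_\gamma^{\pare{k}}\pare{t}}$, $\abs{k}\leq\gamma^{-\frac18\pare{1-\theta}}$, exceeds $D'\gamma^{\frac12}e^{\mu t}$; on $\llav{t<\tau_\gamma}$ we have $\norm{A_{\gamma,i}\pare{s,\cdot}}_\infty\leq\gamma\sum_k\abs{X_{\gamma,i}^{\pare{k}}\pare{s}}\,\abs{\hat\phi_i^{\pare{k}}}$, and since $\abs{\hat\phi_i^{\pare{k}}}=e^{-2\pi^2\tau_i k^2}$ decays superexponentially the tail $\abs{k}>\gamma^{-\frac18\pare{1-\theta}}$ is negligible while the head contributes at most $C\gamma^{\frac12}e^{\mu s}\cdot\gamma^{-\frac18\pare{1-\theta}}$, which is $\ll1$ throughout $[0,t_\theta]$ because $e^{\mu t_\theta}=\gamma^{-\theta/2}$. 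Plugging this into (b) gives $\abs{\tilde E_{\gamma,i}^{\pare{k}}\pare{s}}\leq C\norm{A_{\gamma,i}\pare{s,\cdot}}_\infty^2\leq C'\gamma^{1-\frac14\pare{1-\theta}}e^{2\mu s}=C'\gamma^{\frac34+\frac\theta4}e^{2\mu s}$, which is exactly the order $\gamma^{\frac14\pare{3+\theta}}e^{2\mu t}$ claimed. One then checks that $W_{\gamma,i}^{\pare{k}}$ is of smaller or comparable order via the discretization estimate in (c). Finally one must close the bootstrap: use \eqref{mon}, Lemma~\eqref{az}, and the just‑obtained bound on $\ul E_\gamma^{\pare{k}}$ together with a martingale maximal inequality (Doob) controlling $\int_0^t e^{\pare{t-s}A^{\pare{k}}}\ul M_\gamma^{\pare{k}}\pare{ds}$ — whose predictable bracket is $O(\gamma\cdot t)$ since the jump rates are bounded — to show that on a high‑probability event $\norm{\ul X_\gamma^{\pare{k}}\pare{t}}$ never actually reaches $D'\gamma^{\frac12}e^{\mu t}$ before $t_\theta$, i.e. $\BB P\pare{\tau_\gamma<t_\theta}\to0$. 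A union bound over the $O\pare{\gamma^{-\frac18\pare{1-\theta}}}$ relevant values of $k$ is affordable because each individual failure probability is superpolynomially small in $\gamma$ (the exponential Chebyshev/Markov estimate for the martingale gains a factor that kills any polynomial).

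The main obstacle I anticipate is the self‑referential nature of the estimate: the bound on $\ul E_\gamma^{\pare{k}}$ needs an a priori bound on $\ul X_\gamma^{\pare{k}}$, but the bound on $\ul X_\gamma^{\pare{k}}$ (through Duhamel) needs a bound on $\ul E_\gamma^{\pare{k}}$. This is resolved by the stopping‑time/bootstrap device sketched above, but one has to be careful that the exponential growth rates match: the error integrated against $e^{\pare{t-s}\mu}$ produces $\int_0^t e^{\pare{t-s}\mu}\gamma^{\frac14\pare{3+\theta}}e^{2\mu s}ds\sim\gamma^{\frac14\pare{3+\theta}}e^{2\mu t}$, so after multiplying by $\gamma^{\frac\theta2-\frac12}$ and evaluating at $t_\theta$ (where $e^{2\mu t_\theta}=\gamma^{-\theta}$) one gets $\gamma^{\frac14\pare{3+\theta}+\frac\theta2-\frac12-\theta}=\gamma^{\frac14-\frac\theta4}$, which tends to $0$ for $\theta<1$ — this is precisely where the restriction $\theta\in\pare{0,1}$ is used, and it confirms that the exponents in the statement of the Proposition are the sharp ones that make \eqref{quattro} work. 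The secondary technical nuisance is getting the discrete‑Fourier discretization error $W_{\gamma,i}^{\pare{k}}$ uniform over $\abs{k}\leq\gamma^{-\frac18\pare{1-\theta}}$; this uses smoothness of the periodized Gaussian $\phi_i$ and the Euler–Maclaurin/Poisson‑summation decay of the error of the trapezoidal rule, and the truncation $\abs{k}\leq\gamma^{-\frac18\pare{1-\theta}}$ is chosen exactly so that $k$‑dependent constants there remain negligible.
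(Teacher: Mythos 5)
Your overall architecture is the same as the paper's: a bootstrap closed by a stopping time, the Taylor remainder bounded by $\abs{A_{\gamma,i}\pare{s,x}}^2$ with $A_{\gamma,i}$ controlled through the rapidly decaying Fourier weights $\hat\phi_i^{\pare{k}}$, a separate (harmless) Riemann-sum error for $W_{\gamma,i}^{\pare{k}}$, and Duhamel plus a martingale maximal inequality to close the loop. The paper puts the stopping time on $\ul E_\gamma^{\pare{k}}$ rather than on $\ul X_\gamma^{\pare{k}}$, but that is cosmetic.

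There is, however, a genuine gap in your probabilistic input. Your a priori event requires $\norm{\ul X_\gamma^{\pare{k}}\pare{t}}\le D'\gamma^{\frac{1}{2}}e^{\mu t}$ simultaneously for all $\abs{k}\le\gamma^{-\frac{1}{8}\pare{1-\theta}}$, and you justify the union bound by claiming each individual failure probability is superpolynomially small. This is false: already at $t=0$, $\gamma^{-\frac{1}{2}}\ul X_\gamma^{\pare{k}}\pare{0}$ converges for each $k$ to a nondegenerate Gaussian (this is exactly part \emph{(a)} of Step 2, and likewise the stochastic convolutions converge to nondegenerate Gaussians by Lemma \ref{lkj88}), so $\BB P\pare{\norm{\ul X_\gamma^{\pare{k}}\pare{0}}>D'\gamma^{\frac{1}{2}}}$ tends to a positive constant --- the event you want to exclude is an order-one-standard-deviation event, which no concentration inequality makes small. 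Consequently the maximum over the $\sim\gamma^{-\frac{1}{8}\pare{1-\theta}}$ relevant modes exceeds $D'\gamma^{\frac{1}{2}}$ with probability tending to one, and $\BB P\pare{\tau_\gamma=0}\not\to 0$. The fix is exactly the paper's choice in \eqref{insieme}: relax the per-mode threshold to $k^2\gamma^{\frac{1}{8}\pare{3+\theta}}=k^2\gamma^{\frac{1}{2}-\delta}$ with $\delta=\frac{1}{8}\pare{1-\theta}$, so that Chebyshev plus It\^o isometry gives a failure probability $Ck^{-4}\gamma^{2\delta}$ per mode, summable over $k$ (Lemma \ref{PPP}); the loss of $\gamma^{-\delta}$ is then absorbed because the error is quadratic, and $2\delta=\frac{1}{4}\pare{1-\theta}$ is precisely the margin between $\gamma^{1}$ and the target $\gamma^{\frac{1}{4}\pare{3+\theta}}$ --- one must use the $e^{-\tau j^2}$ decay to absorb both the $k^2$ weights and the number of modes (your step that multiplies the sup by the mode count $\gamma^{-\frac{1}{8}\pare{1-\theta}}$ is too wasteful once the threshold is relaxed). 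With that correction the rest of your bootstrap goes through as in the paper.
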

\begin{proof}
The choice of the constant $D$ will be made a posteriori.
Let $t^*=t^*\pare{D}$ be the stopping time defined as
\begin{align}
t^*\defi\min\llav{t\geq 0: \sup_{\abs{k}\leq \gamma^{-\frac{1}{8}\pare{1-\theta}}}\norm{E^{\pare{k}}\pare{t}}\ge D\gamma^{\frac{1}{4}\pare{3+\theta}}e^{2\mu t}},
\end{align}
with the convention that the minimum of the empty set is $\infty$.
The assertion follows once we find $D$ such that 
\begin{align}\label{undici}
\BB P\pare{t_\theta<t^*}\xrightarrow[\gamma\to 0]{}1.
\end{align}
Under definition
\begin{align}\label{insieme}
\begin{aligned}
\mathscr A_{\gamma, \theta}\defi
\Bigg[  &\sup_{t\in [0,t_\theta]}\norm{\int_0^t e^{\pare{t-s} A^{\pare{k}}}\ul M_\gamma^{\pare{k}}\pare{ds}}\leq k^2 \gamma^{\frac{1}{8}\pare{3+\theta}}\quad\forall k\in\BB Z \setminus\llav{\pm 1},\\[5pt]
& \sup_{t\in [0,t_\theta]}\norm{\int_0^t e^{-sA^{\pare{k}}}\ul M_\gamma^{\pare{k}}\pare{ds}}\leq \gamma^{\frac{1}{8}\pare{3+\theta}}\quad\forall k\in\llav{\pm 1},\\[5pt]
& \norm{\ul{X}_\gamma^{\pare{k}}\pare{0}}\leq k^2 \gamma^{\frac{1}{8}\pare{3+\theta}}\quad\forall k\in\BB Z\Bigg],
\end{aligned}
\end{align}
\eqref{undici} follows once we
\begin{itemize} 
\item[1.] find $D$ such that
\begin{align}\label{plk}
\mathscr A_{\gamma, \theta}\subset \corch{t_\theta<t^*}
\end{align}
for $\gamma$ small enough, and
\item[2.] prove that
$\BB P\pare{\mathscr A_{\gamma, \theta}}\xrightarrow[\gamma\to 0]{}1$.
\end{itemize}
Item $2$ is a consequence of Lemma \eqref{PPP} in the Appendix.
To prove \eqref{plk}, we need the following Lemma.
\begin{lemma}\label{bound1}
Let $\tau\defi \tau_1\wedge\tau_2$.
There exists a  constant $C$ such that, for all $t\geq 0$, 
\begin{align}
\norm{E_\gamma^{\pare{k}}\pare{t}}\leq
C\corch{\gamma k+\pare{\sum_{j\in\BB Z}e^{-{\tau j^2}}\norm{\underline X_\gamma^{\pare{j}}\pare{t}}}^2}.
\end{align}
\end{lemma}
We first prove \eqref{plk},
postponing the proof of Lemma \eqref{bound1}. 
Fix $\omega\in \mathscr A_{\gamma, \theta}$.
Let $g_\gamma\pare{t}\defi D\gamma^{\frac{1}{4}\pare{3+\theta}}e^{2\mu t}$ (observe that $g_\gamma\pare{\cdot}$ coincides with the function used to define $t^*$).
As the case $t^*=\infty$ is trivial,
we only prove the assertion in the case $t^*<\infty$.
For all $t\in [0, t_\theta\wedge t^*]$, by \eqref{mon} and Lemma \ref{az}, we have
\begin{align}\label{as}
\begin{aligned}
\norm{\ul X_\gamma^{\pare{k}}\pare{t}}&\leq C_1e^{\mu t}\pare{\gamma^{\frac{1}{8}\pare{3+\theta}}+\int_0^t e^{-\mu s}g_\gamma\pare{s}\ds+\gamma^{\frac{1}{8}\pare{3+\theta}}}
\\[5pt]
&\leq \pare{C_1+1}e^{\mu t}\gamma^{\frac{1}{8}\pare{3+\theta}}+\frac{C_1D}{\mu}e^{2\mu t}\gamma^{\frac{1}{4}\pare{3+\theta}}
\end{aligned}
\end{align}
for $k=\pm 1$, and
\begin{align}\label{ad}
\begin{aligned}
\norm{\ul X_\gamma^{\pare{k}}\pare{t}}&\leq \pare{C_1+1} k^2 \gamma^{\frac{1}{8}\pare{3+\theta}}+\frac{C_1 D}{\mu} e^{2\mu t}\gamma^{\frac{1}{4}\pare{3+\theta}}
\end{aligned}
\end{align}
for $k\neq\pm 1$ such that $\abs{k}\leq \gamma^{-\frac{1}{8}\pare{1-\theta}}$.
Observe that, in \eqref{as} and \eqref{ad}, we strongly used that $t<t^*$.
If $C_2$ is the constant defined in Lemma \ref{bound1},
we have
\begin{align}\label{18}
\begin{aligned}
&\sup_{k\leq \gamma^{-\frac{1}{8}\pare{1-\theta}}}\norm{\ul{E}_\gamma^{\pare{k}}\pare{t}}
\\[5pt]
&\quad \leq C_2\gamma^{1-\frac{1}{8}\pare{1-\theta}}
+ C_2\pare{e^{-\tau}\pare{\norm{X_\gamma^{\pare{-1}}}+\norm{X_\gamma^{\pare{1}}}}
+\sum_{\substack{j\neq\pm 1
\\[3pt]
\abs{j}\leq \gamma^{-\frac{1}{8}\pare{1-\theta}}}}
e^{-\tau j^2}\norm{X_\gamma^{\pare{k}}}
+\sum_{\abs{j}>\gamma^{-\frac{1}{8}\pare{1-\theta}}}e^{-\tau j^2}}^2
\end{aligned}
\end{align}
for all $t\geq 0$.
It is easily seen that $\sum_{\abs{j}>\gamma^{-\frac{1}{8}\pare{1-\theta}}}e^{-\tau j^2}\leq \gamma^{\frac{1}{8}\pare{3+\theta}}$ for $\gamma$ small enough.
By plugging estimations \eqref{as} and \eqref{ad} into \eqref{18}, we get that, for all $t\in [0, t_\theta\wedge t^*]$,
\begin{align}\label{fgr}
\begin{aligned}
\sup_{\abs{k}\leq \gamma^{-\frac{1}{8}\pare{1-\theta}}}\norm{\ul{E}_\gamma^{\pare{k}}\pare{t}}\leq f_\gamma\pare{t},
\end{aligned}
\end{align}
where $f_\gamma\pare{t}\defi C_3e^{2\mu t}\gamma^{\frac{1}{4}\pare{3+\theta}}+C_4\pare{D}e^{4\mu t}\gamma^{\frac{1}{2}\pare{3+\theta}}$ with $C_4$ is a constant that depends on $D$.
We will choose $D$ properly in order to have $t_\theta<t^*$ for $\gamma$ sufficiently small, which allows to get \eqref{plk}.
For this purpose, we need a brief analysis of the function $f_\gamma-g_\gamma$ in $[0, \infty)$.
It is easy to prove that, for $D>C_3$ and $\gamma$ sufficiently small, $f_\gamma-g_\gamma$ has a unique root, before which it is negative and after which it is positive.
Since
\begin{align}
\begin{aligned}
f_\gamma\pare{t_\theta}=C_3\gamma^{\frac{3}{4}\pare{1-\theta}}+C_4\pare{D}\gamma^{\frac{3}{2}\pare{1-\theta}}
\end{aligned}
\end{align}
and
\begin{align}
g_\gamma\pare{t_\theta}=D\gamma^{\frac{3}{4}\pare{1-\theta}},
\end{align}
under our choice of $D$ and for $\gamma$ sufficiently small, $f_\gamma\pare{t_\theta}-g_\gamma\pare{t_\theta}<0$, while $f_\gamma\pare{t^*}-g_\gamma\pare{t^*}\geq 0$ by \eqref{fgr} and the definition of $t^*$.
This let us conclude that $t_\theta<t^*$ for $\gamma$ small enough, and  our assertion follows.
\end{proof}


\begin{proof}[Proof of lemma \ref{bound1}]
We will analyse just the first component of the vector $\underline E_\gamma^{\pare{k}}\pare{t}$ as the analysis of the second one is analogous.
We first prove that there exists a constant $C_1$ such that
\begin{align}\label{et1}
\abs{W^{\pare{k}}_{\gamma,1}\pare{s}}\le  C_1\gamma k.
\end{align}
We can forget about the factor $\beta_1\tanh'\pare{\lambda\beta_1}$ and control the difference
\begin{align}
\abs{ \gamma  \sum_{x\in\Lambda_ \gamma }F^{\pare{k}}  \pare{ \gamma x}A_{\gamma,1}\pare{s,x}-\hat\phi_1^{\pare{k}}X^{\pare{k}}_{ \gamma ,1}\pare{s}}.
\end{align}
It is easy to see that
\begin{align}
\gamma  \sum_{x\in\Lambda_ \gamma }F^{\pare{k}}  \pare{ \gamma x}A_{\gamma,1}\pare{s,x}
=X_{\gamma,1}^{\pare{k}}\pare{s}\gamma\sum_{x\in\Lambda_\gamma}\phi_1\pare{\gamma x}F^{\pare{k}}  \pare{\gamma x}
\end{align}
(the Fourier transform turns convolution into product).
As $\abs{X_{\gamma,1}^{\pare{k}}}\le 1$, we just need to control the difference
\begin{align}
\abs{\gamma\sum_{x\in\Lambda_\gamma}\phi_1\pare{\gamma x}F^{\pare{k}}  \pare{\gamma x}-\int_{\BB T}\phi_1\pare{r}F^{\pare{k}}  \pare{r}dr}.
\end{align}
It only remains to observe that the increments of the function $\phi_1 F^{\pare{k}}$ are bounded by a constant $C_2$ times $k$.

We now prove that there exists a constant $C_3$ such that 
\begin{align}\label{et2}
\abs{\tilde E_{\gamma,1}^{\pare{k}}\pare{s}}\le C_3 \pare{\sum_{j\in\BB Z}e^{-\tau j^2}\abs{X_{\gamma,i}^{\pare{j}}\pare{t}}}^2.
\end{align}
As the second derivative of the hyperbolic tangent is uniformly bounded, we have
\begin{align}
\abs{E_{\gamma,1}^{\pm}\pare{s,x}}\le C_4\pare{\beta_1A_{\gamma,1}\pare{s,x}}^2.
\end{align}
Then, for some constant $C_5$, 
\begin{align}
\begin{split}
\abs{E_{\gamma, 1}^+(t,x)\pm E_{\gamma, 1}^-(t,x)}
&\leq C_5\pare{A_{\gamma,1}\pare{s,x}}^2
\\[5pt]
&=C_5 \pare{\gamma\sum_{y\in\Lambda_\gamma}\phi_1\pare{\gamma y-\gamma x}\sigma_{\gamma, 1}(t,y)}^2
\\[5pt]
&= C_5\pare{\gamma\sum_{y\in\Lambda_\gamma}\sum_{j=0}^{\gamma^{-1}-1}\hat\phi_1^{(j)}e^{\texttt{i}(\gamma y-\gamma x)2\pi j}\sigma_{\gamma, 1}(t,y)}^2
\\[5pt]
&= C_5\pare{\sum_{j=0}^{\gamma^{-1}-1} e^{-\tau_1 j^2}X_{\gamma,1}^{(j)}(t)}^2
\\[5pt]
&\leq C_5\pare{\sum_{j=0}^{\gamma^{-1}-1}e^{-\tau j^2}\abs{X^{(j)}_{\gamma,1}(t)}}^2.
\end{split}
\end{align}
The last bound implies \eqref{et2}.

\eqref{et1} and \eqref{et2} let us conclude.
\end{proof}


\subsubsection*{Step 2}

In this step, we study the convergence in distribution of $\gamma^{\frac{\theta}{2}-\frac{1}{2}}\ul{\tilde{X}}^{\pare{k}}_\gamma\pare{t_\theta}$.
Actually, the proof of this convergence does not make use of the fact that $\theta<1$, so it works for every positive $\theta$.
By Duhamel's formula \eqref{duha02},
this is equivalent to studying the limit of
\begin{align}\label{27}
\gamma^{\frac{\theta}{2}-\frac{1}{2}}e^{t_\theta A^{\pare{k}} }\ul X_\gamma^{(k)}(0)+\gamma^{\frac{\theta}{2}-\frac{1}{2}}\int_0^{t_\theta} e^{(t_\theta-s)A^{\pare{k} }}\ul{M}^{(k)}_\gamma(ds).
\end{align}

For $k\neq \pm 1$, using propositions \ref{az} and \ref{PPP}, and the fact that the real parts of the eigenvalues are negative, it is easy to see that
\begin{align}
\norm{\gamma^{\frac{\theta}{2}-\frac{1}{2}}e^{t_\theta A^{\pare{k}} }\ul X_\gamma^{(k)}(0)+\gamma^{\frac{\theta}{2}-\frac{1}{2}}\int_0^{t_\theta} e^{(t_\theta-s)A^{\pare{k} }}\ul{M}^{(k)}_\gamma(ds)}\conv{\gamma\to 0}0
\end{align}
in $\BB P$-probability; 
this implies the convergence of $\gamma^{\frac{\theta}{2}-\frac{1}{2}}\ul X^{\pare{k}}_\gamma\pare{t_\theta}$  to the delta measure concentrated in $\pare{
\begin{array}{c}
0
\\[5pt]
0
\end{array}
}$.

The case $k=\pm 1$ is more delicate.
We only do the case $k=1$ as the case $k=-1$ is analogous.
During this proof, we omit writing the superscript $^{\pare{1}}$ when it does not create confusions.
Using decomposition \eqref{decomposition}, we can write
\begin{align}
e^{t_\theta A}\ul X_\gamma\pare{0}
=e^{t_\theta\mu}\proj_1\pare{\ul X_\gamma\pare{0}}
+e^{t_\theta\mu_2}\proj_2\pare{\ul X_\gamma\pare{0}}.
\end{align}
The fact that $\mu_2$ is negative let us proceed similarly to the previous paragraph and conclude that
$e^{t_\theta\mu_2}\gamma^{\frac{\theta}{2}-\frac{1}{2}} \proj_2\pare{\ul X_\gamma\pare{0}}$ vanishes in the limit.
Similarly, also vanishes the projection $\proj_2$ of the second addend of \eqref{27}.
Using identity $\gamma^{\frac{\theta}{2}-\frac{1}{2}} e^{t_\theta\mu}=\gamma^{-\frac{1}{2}}$, the problem is reduced to proving that
\begin{align}
\gamma^{-\frac{1}{2}}\ul X_{\gamma}\pare{0}+\gamma^{-\frac{1}{2}}\int_{0}^{t_\theta}e^{-s\mu_1}\ul M_{\gamma}\pare{ds}
\end{align}
converges in distribution to $\pare{
\begin{array}{c}
U_1^{\R}+\mathtt{i}U_1^{\I} \\[5pt]
U_2^{\R}+\mathtt{i}U_2^{\I} 
\end{array}
}+\pare{
\begin{array}{c}
V_1^{\R}+\mathtt{i}V_1^{\I} \\[5pt]
V_2^{\R}+\mathtt{i}V_2^{\I} 
\end{array}
}$.
We can conclude if we prove that \textit{(a)}
$\gamma^{-\frac{1}{2}}\ul X_{\gamma}\pare{0}$ converges to $\pare{
\begin{array}{c}
U_1^{\R}+\mathtt{i}U_1^{\I} \\[5pt]
U_2^{\R}+\mathtt{i}U_2^{\I} 
\end{array}
}$,
\textit{(b)} $\gamma^{-\frac{1}{2}}\int_{0}^{t_\theta}e^{-s\mu}\ul M_{\gamma}\pare{ds}$  converges to $\pare{
\begin{array}{c}
V_1^{\R}+\mathtt{i}V_1^{\I} \\[5pt]
V_2^{\R}+\mathtt{i}V_2^{\I} 
\end{array}
}$,
and \textit{(c)} 
$\gamma^{-\frac{1}{2}}\ul X_{\gamma}\pare{0}$ and
$\gamma^{-\frac{1}{2}}\int_{0}^{t_\theta}e^{-s\mu}d\ul M_{\gamma}\pare{s}$ are asymptotically independent.

\begin{itemize}
\item[\textit{(a)}]
As $\sigma_{\gamma,1}\pare{0}$ and $\sigma_{\gamma,2}\pare{0}$ are independent for every $\gamma$,
we can conclude if we prove that,
for $i\in\llav{1,2}$, $\gamma^{-\frac{1}{2}}X_{\gamma,i}\pare{0}$ weakly converges to $U_i^{\mathfrak R}+\mathtt{i}U_i^{\mathfrak I}$.
Let $F_{\mathfrak R}\pare{r}\defi \cos\pare{2\pi r}$ and $F_{\mathfrak I}\pare{r}\defi \sin\pare{2\pi r}$ respectively be the real and imaginary parts of $F^{\pare{1}}$.
We prove it only for $i=1$ as the case $i=2$ is analogous.
This goal is equivalent to the convergence of  the random $\BB R^2$-element $\pare{\pic{\sigma_{\gamma,1},F_{\mathfrak R}},\pic{\sigma_{\gamma,1},F_{\mathfrak I}}}$
to $\pare{U_1^{\mathfrak R},U_1^{\mathfrak I}}$.
As the weak convergence of random $\BB R^2$-elements is characterized by the weak convergence of linear combination of the coordinates, we have to prove that,
 for every $a,b\in\BB R$,
the sequence $a\pic{\sigma_{\gamma,1},F_{\mathfrak R}}+b\pic{\sigma_{\gamma,1},F_{\mathfrak I}}=\pic{\sigma_{\gamma,1},aF_{\mathfrak R}+bF_{\mathfrak I}}$ converges weakly to $aU_1^{\mathfrak R}+bU_1^{\mathfrak I}\sim \CAL N\pare{0,a^2\pi+b^2\pi}$.
This follows from proposition \ref{himan}
and the fact that $\int_0^1\pare{aF_{\mathfrak R}+bF_{\mathfrak I}}^2=a^2\pi+b^2\pi$.

\item[\textit{(b)}] 
We give an explicit construction of the probability space $\pare{\Omega,\CAL F,\BB P}$.
It will we the product of two probability spaces $\pare{\Omega_1,\CAL F_1,\BB P_1}$ and $\pare{\Omega_2,\CAL F_2,\BB P_2}$,
the first one for the initial distribution and the second one for the Markovian evolution.
For the first one, we choose $\Omega_1\defi\llav{-1,1}^{\BB N}\times\llav{-1,1}^{\BB N}$ with the product $\sigma$-algebra and the Bernoulli probability with parameter $p=\frac{1}{2}$.
The second one can be any one for which the trajectories are \textit{c\`adl\`ag} for every $\omega_2\in\Omega_2$ (not only almost every):
for instance, we can chose $\Omega_2$ to be the set of subsets of $\BB R^2$ without limit points, $\BB P_2$ to be the Poisson probability with intensity $1$ as in \cite{MR96},
and to construct the Markov process as in \cite{Bre99}.
Fix $\omega_1$  such that $\pic{\sigma_{\gamma,i}\corch{\omega_1},G}\conv{\gamma\to 0}0$ (this occurs with $\BB P_1$-probability $1$).
For simplicity of notation, from now on, the dependence on $\omega_1$ will be ommited.
Decomposing the stochastic integral into the two coordinates and into their real and immaginary parts,
we can conclude if we prove the weak convergence of the $\BB R^4$-random vector $\pare{\mathcal{Z}_{\gamma, 1}^{\mathfrak R}, \CAL Z_{\gamma, 1}^{\mathfrak I} , \CAL Z_{\gamma, 2}^{\mathfrak R}, \CAL Z_{\gamma, 2}^{\mathfrak I}}$ to $\pare{{V}_{ 1}^{\mathfrak R},  V_{1}^{\mathfrak I} ,  V_{2}^{\mathfrak R}, V_{ 2}^{ I}}$,
where 
\begin{align}
\mathcal{Z}_{\gamma,i}^{\#}\defi\gamma^{-\frac{1}{2}}\int_0^{t_c}e^{-s\mu}dM_{\gamma, i}^\#\pare{s}
\end{align}
for all $i\in\llav{1,2}$ and $\#\in \llav{\mathfrak{R}, \mathfrak{I}}$.
As before, we need to prove that, for all $a_1^\mathfrak{R}, a_1^\mathfrak{I}, a_2^\mathfrak{R}, a_2^\mathfrak{I}\in\BB R$, 
\begin{align}\label{ropo}
\sum_{\#\in\llav{\mathfrak R, \mathfrak I}} \, \sum_{i\in\llav{1,2}}a_i^{\#}\mathcal{Z}_{\gamma, i}^{ \#} \
\xrightarrow[\gamma\to 0]{D} \
\sum_{\#\in\llav{\mathfrak R, \mathfrak I}} \,
\sum_{i\in\llav{1,2}}a_i^{\#}{V}_i^{\#}\sim N\pare{0, \sum_{\#\in\llav{\mathfrak R, \mathfrak I}}\, \sum_{i\in\llav{1,2}}\pare{a_i^{\#}}^2\Var\pare{V_i^{\#}}}.
\end{align}
Observe that the left hand side of \eqref{ropo} can be written as $\gamma^{-\frac{1}{2}}\int_0^{t_c}e^{-s\mu}\tilde dM_\gamma\pare{s}$,
where $\tilde M_\gamma\pare{t}$ is the martingale with respect to the filtration of the process defined as
\begin{align}
\begin{aligned}
\tilde M_\gamma\pare{t}\defi&\pic{\sigma_{\gamma, 1}\pare{t}, a_1^{\mathfrak R}F_{\mathfrak{R}}+a_1^{\mathfrak I}F_{\mathfrak{I}}}+\pic{\sigma_{\gamma, 2}\pare{t}, a_2^{\mathfrak R}F_{\mathfrak{R}}+a_2^{\mathfrak I}F_{\mathfrak{I}}}\\[5pt]
&-\pic{\sigma_{\gamma, 1}\pare{0}, a_1^{\mathfrak R}F_{\mathfrak{R}}+a_1^{\mathfrak I}F_{\mathfrak{I}}}-\pic{\sigma_{\gamma, 2}\pare{0}, a_2^{\mathfrak R}F_{\mathfrak{R}}+a_2^{\mathfrak I}F_{\mathfrak{I}}}\\[5pt]
&-\int_0^{t_c}L_\gamma\pare{\pic{\sigma_{\gamma, 1}\pare{s}, a_1^{\mathfrak R}F_{\mathfrak{R}}+a_1^{\mathfrak I}F_{\mathfrak{I}}}+\pic{\sigma_{\gamma, 2}\pare{s}, a_2^{\mathfrak R}F_{\mathfrak{R}}+a_2^{\mathfrak I}F_{\mathfrak{I}}}}ds.
\end{aligned}
\end{align}
Call $I_\gamma\pare{t}\defi\gamma^{-\frac{1}{2}}\int_0^{t}e^{-s\mu}d\tilde M_\gamma\pare{s}$. \eqref{ropo} follows if we prove that 
\begin{itemize}
\item[1.] $\abs{I_\gamma\pare{t_c}-I_\gamma\pare{\infty}}\xrightarrow[\gamma\to 0]{\BB P_2}0$ and
\item[2.] $I_\gamma\pare{\infty}\xrightarrow[\gamma\to 0]{D}N\pare{0,\sum_{\#\in\llav{\mathfrak R, \mathfrak I}}\sum_{i\in\llav{1,2}}\pare{a_i^{\#}}^2V\pare{V_i^{\#}}}$.
\end{itemize}

We first prove item $1$.
From now on, $\pic{M}\pare{\cdot}$ will denote the compensator of the process $M\pare{\cdot}$  (see \cite{HHK06} for details).
Fix $\varepsilon>0$, and apply Chebyshev inequality and It\^o isometry to get
\begin{align}\label{pl}
\BB P_2\pare{\abs{I_\gamma\pare{t_c}-I_\gamma\pare{\infty}}>\varepsilon}\leq\varepsilon^{-2}\BB E_{\BB P_2}\pare{\int_{t_c}^{\infty}e^{-2\mu_1 s}\pic{\tilde M_\gamma}\pare{ds}}.
\end{align}
Call $G_\gamma\pare{t}\defi \pic{\sigma_{\gamma, 1}\pare{t}, a_1^{\mathfrak R}F_{\mathfrak{R}}+a_1^{\mathfrak I}F_{\mathfrak{I}}}+\pic{\sigma_{\gamma, 2}\pare{t}, a_2^{\mathfrak R}F_{\mathfrak{R}}+a_2^{\mathfrak I}F_{\mathfrak{I}}}$.
By Lemma 5.1 in the Appendix of \cite{KL99}, we know that
\begin{align}\label{hj}
\begin{aligned}
\pic{\tilde M_\gamma}\pare{t}=\int_0^t L_\gamma G_\gamma\pare{s}^2-2G_\gamma\pare{s}L_\gamma G_\gamma\pare{s}ds.
\end{aligned}
\end{align}
Easy computations show that there exists a constant $C_1$ such that, for all $t\geq 0$,  $L_\gamma G_\gamma\pare{t}^2-2G_\gamma\pare{t}L_\gamma G_\gamma\pare{t}\leq C_1\gamma $. This last estimation, \eqref{pl} and \eqref{hj} allow us to conclude item $1$.

We now prove item $2$.
The following Lemma holds.
\begin{lemma}\label{lkj88} For all $t\geq0$,
\begin{align}
I_\gamma\pare{t}\xrightarrow[\gamma\to 0]{D}I_0\pare{{t}},
\end{align}
where $I_0\pare{t}\sim N\pare{0,H\pare{t}}$, with $H\pare{t}=\int_0^te^{-2\mu s}h\pare{s}\ds$ and
\begin{align}
\begin{aligned}
h(t)=&\corch{\pare{a_1^{\mathfrak R}}^2+\pare{a_1^{\mathfrak I}}^2}
\corch{2\pi+{\pi}\tanh\pare{\beta_1\lambda}\pare{\tanh\pare{\beta_1\lambda}-\tanh\pare{\beta_2\lambda}}\pare{1-e^{-2t}}}\\[5pt]
&+\corch{\pare{a_2^{\mathfrak R}}^2+\pare{a_2^{\mathfrak I}}^2}
\corch{2\pi+{\pi}\tanh\pare{\beta_2\lambda}\pare{\tanh\pare{\beta_1\lambda}-\tanh\pare{\beta_2\lambda}}\pare{1-e^{-2t}}}.
\end{aligned}
\end{align}
\end{lemma}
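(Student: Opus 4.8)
The plan is to deduce the statement from a martingale central limit theorem, so two things have to be checked: that the jumps of $I_\gamma$ are asymptotically negligible, and that its predictable bracket converges to $H\pare{t}$. A jump of $\tilde M_\gamma$ comes from a single spin flip, and flipping $\sigma_{\gamma,i}\pare{x}$ changes $G_\gamma$ by $-2\gamma\sigma_{\gamma,i}\pare{x}g_i\pare{\gamma x}$, where $g_i\defi a_i^{\R}F_{\R}+a_i^{\I}F_{\I}$; hence $\abs{\Delta\tilde M_\gamma\pare{s}}\leq C\gamma$ and $\sup_{s\leq t}\abs{\Delta I_\gamma\pare{s}}\leq C\gamma^{1/2}\xrightarrow[\gamma\to0]{}0$, which already gives the Lindeberg condition. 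For the bracket, $\pic{I_\gamma}\pare{t}=\gamma^{-1}\int_0^t e^{-2\mu s}\pic{\tilde M_\gamma}\pare{ds}$, and by \eqref{hj}, together with the fact that $\pic{\tilde M_\gamma}$ is the integrated carr\'e du champ of $G_\gamma$ and that each admissible flip changes $G_\gamma$ by the amount above,
\begin{align}\nonumber
\gamma^{-1}\frac{d}{ds}\pic{\tilde M_\gamma}\pare{s}=4\gamma\sum_{i\in\llav{1,2}}\sum_{x\in\Lambda_\gamma}R_i\pare{x,\ul\sigma_\gamma\pare{s}}g_i\pare{\gamma x}^2.
\end{align}
It then suffices to prove that the right-hand side tends, uniformly on $\corch{0,t}$ and in $\BB P$-probability, to $h\pare{s}$; since it is uniformly bounded, dominated convergence yields $\pic{I_\gamma}\pare{t}\xrightarrow[\gamma\to0]{\BB P}H\pare{t}$.

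To compute that limit I would use $R_i\pare{x,\ul\sigma}=\frac12\pare{1-\sigma_{\gamma,i}\pare{x}\tanh h_i\pare{x}}$, with $h_1\pare{x}=\beta_1 A_{\gamma,1}\pare{s,x}+\beta_1\lambda\sigma_{\gamma,2}\pare{x}$ and $h_2\pare{x}=\beta_2 A_{\gamma,2}\pare{s,x}-\beta_2\lambda\sigma_{\gamma,1}\pare{x}$, which splits the sum into a ``constant'' part $2\sum_i\gamma\sum_x g_i\pare{\gamma x}^2\to2\sum_i\int_0^1 g_i^2$ and a ``correction'' part $-2\sum_i\gamma\sum_x\sigma_{\gamma,i}\pare{x}\tanh h_i\pare{x}g_i\pare{\gamma x}^2$. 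The key preliminary estimate is that $A_{\gamma,i}\pare{s,x}$ tends to $0$ uniformly in $\pare{s,x}\in\corch{0,t}\times\Lambda_\gamma$ in probability: from the discrete Fourier expansion one gets $\abs{A_{\gamma,i}\pare{s,x}}\leq\sum_k\abs{\hat\phi_i^{\pare{k}}}\abs{X_{\gamma,i}^{\pare{k}}\pare{s}}$, and one concludes by truncating the series at a large fixed mode, using the Gaussian decay of $\hat\phi_i^{\pare{k}}$, and applying Corollary \ref{arjona} with $G=F^{\pare{k}}$ to each remaining mode. Since $\sigma_{\gamma,i}\pare{x}\tanh h_i\pare{x}=\eta_\gamma\pare{s,x}\tanh\pare{\epsilon_i\beta_i\lambda+\sigma_{\gamma,3-i}\pare{x}\beta_i A_{\gamma,i}\pare{s,x}}$ (with $\epsilon_1=1$, $\epsilon_2=-1$ and using $\eta_\gamma^2\equiv1$), the uniform smallness of $A_{\gamma,i}$ lets me replace $\sigma_{\gamma,i}\pare{x}\tanh h_i\pare{x}$ by $\epsilon_i\tanh\pare{\beta_i\lambda}\,\eta_\gamma\pare{s,x}$ up to a uniform error, reducing everything to the limit of $\pic{\eta_\gamma\pare{s},g_i^2}_\gamma$.

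The core of the argument is then the hydrodynamic behaviour of the correlation field: one must show that for every $G\in C\pare{\BB T,\BB R}$, $\pic{\eta_\gamma\pare{\cdot},G}_\gamma$ converges in $\BB P$-probability, uniformly on $\corch{0,t}$, to $\bar\eta\pare{\cdot}\int_0^1 G$, where $\bar\eta$ solves $\bar\eta'=-2\bar\eta+\tanh\pare{\beta_1\lambda}-\tanh\pare{\beta_2\lambda}$ with $\bar\eta\pare{0}=0$, i.e. $\bar\eta\pare{s}=\frac12\corch{\tanh\pare{\beta_1\lambda}-\tanh\pare{\beta_2\lambda}}\pare{1-e^{-2s}}$; this non-trivial, $s$-dependent limit (a consequence of the asymmetric $\pm\lambda$ coupling) is precisely what produces the $\pare{1-e^{-2s}}$ factor in $h$. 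To get it, consider the Dynkin decomposition of $\pic{\eta_\gamma\pare{\cdot},G}_\gamma$: its martingale part has compensator of order $\gamma$ and vanishes, $\pic{\eta_\gamma\pare{0},G}_\gamma\to0$ because the two initial configurations are independent, and a direct computation gives $L_\gamma\eta_\gamma\pare{s,x}=-2\pare{R_1+R_2}\pare{x,\ul\sigma_\gamma\pare{s}}\eta_\gamma\pare{s,x}=-2\eta_\gamma\pare{s,x}+\tanh\pare{\beta_1\lambda+\sigma_{\gamma,2}\pare{x}\beta_1 A_{\gamma,1}\pare{s,x}}+\tanh\pare{-\beta_2\lambda+\sigma_{\gamma,1}\pare{x}\beta_2 A_{\gamma,2}\pare{s,x}}$ (again using $\eta_\gamma^2\equiv1$); with $A_{\gamma,i}\to0$ this shows that any subsequential limit $\bar\eta_G$ of $\pic{\eta_\gamma\pare{\cdot},G}_\gamma$ satisfies $\bar\eta_G\pare{t}=-2\int_0^t\bar\eta_G\pare{s}\,ds+t\corch{\tanh\pare{\beta_1\lambda}-\tanh\pare{\beta_2\lambda}}\int_0^1 G$, and the stated ODE (hence uniqueness of the limit) follows. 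Substituting $\pic{\eta_\gamma\pare{s},g_i^2}_\gamma\to\bar\eta\pare{s}\int_0^1 g_i^2$ into the expression for $\gamma^{-1}\frac{d}{ds}\pic{\tilde M_\gamma}$ and computing the elementary integrals identifies its limit with $h\pare{s}$, whence $\pic{I_\gamma}\pare{t}\to H\pare{t}$; the martingale central limit theorem for c\`adl\`ag martingales with vanishing jumps (e.g. in the form of the appendix of \cite{KL99}) then gives $I_\gamma\pare{t}\xrightarrow[\gamma\to0]{D}N\pare{0,H\pare{t}}=I_0\pare{t}$. I expect the main obstacle to be the identification of $\bar\eta$ — realising that the correlation field does not vanish but carries this time-dependent limit — together with the uniform control of $A_{\gamma,i}$ used to discard every Taylor remainder.
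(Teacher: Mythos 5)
Your proof is correct and its skeleton coincides with the paper's: reduce to convergence of the predictable bracket via a martingale CLT, compute $\gamma^{-1}\pic{\tilde M_\gamma}$ as the integrated carr\'e du champ $4\gamma\sum_i\sum_xR_i\pare{x,\ul\sigma_\gamma}g_i\pare{\gamma x}^2$, kill the Taylor remainders with the uniform bound $\sup_{s,x}\abs{\sigma_{\gamma,i}\pare{s}*\phi_i\pare{x}}\to0$ obtained by Fourier truncation, and split the rates into a constant part and a part carried by the correlation field $\eta_\gamma$. (You also check the vanishing-jumps condition explicitly, which the paper leaves implicit in its citation of the martingale CLT.) The one genuine divergence is the treatment of the $\eta_\gamma$-terms, which is where the $\pare{1-e^{-2t}}$ factor comes from: the paper disposes of them by invoking Corollary \ref{arjona}, i.e.\ by importing the hydrodynamic behaviour of the correlation field from the companion article, whereas you re-derive that behaviour from scratch through the generator computation $L_\gamma\eta_\gamma\approx-2\eta_\gamma+\tanh\pare{\beta_1\lambda}-\tanh\pare{\beta_2\lambda}$ and the resulting ODE for $\bar\eta$. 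Your route is longer but buys self-containedness, and it is in fact the safer one here: as printed, Corollary \ref{arjona} asserts $\sup_t\abs{\pic{\eta_\gamma\pare{t},G}}\to0$, which cannot be the statement actually used (it would erase the $\pare{1-e^{-2t}}$ terms altogether); what is needed, and what your ODE argument correctly produces, is $\pic{\eta_\gamma\pare{t},G}_\gamma\to\frac12\corch{\tanh\pare{\beta_1\lambda}-\tanh\pare{\beta_2\lambda}}\pare{1-e^{-2t}}\int_0^1G$. Two small remarks: your subsequential-limit step tacitly uses tightness/equicontinuity of $t\mapsto\pic{\eta_\gamma\pare{t},G}_\gamma$, which follows from the boundedness of the generator terms and should be said; and your computation yields $2\pi-\pi\tanh\pare{\beta_1\lambda}\corch{\tanh\pare{\beta_1\lambda}-\tanh\pare{\beta_2\lambda}}\pare{1-e^{-2t}}$ in the first bracket of $h$, with a minus sign --- this is the version consistent with the variances of $V_1^{\R},V_1^{\I}$ in Theorem \ref{1} and with display \eqref{h5}, so the plus sign in the lemma's statement is a typo rather than an error on your side.
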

\begin{proof}
To conclude, it is enough to prove that
\begin{align}
\pic{I_\gamma }\pare{t}\xrightarrow[\gamma\to 0]{\BB P_2}H\pare{t}
\end{align}
for all $t\geq 0$ (see Theorem 13 in \cite{Pol84}, for instance).
Observe that 
\begin{align}\label{a14}
&\abs{\pic{I_\gamma}\pare{t}-H\pare{t}}=\abs{\gamma^{-1}\int_0^t e^{-2\mu s}\pic{\tilde M_{\gamma}}\pare{ds}-
\int_0^t e^{-2\mu s} h\pare{s}\ds}.
\end{align}
Use formula \eqref{hj} to get
\begin{align}\label{09}
\begin{aligned}
\gamma^{-1}\pic{\tilde M_{\gamma}}\pare{t}&=\gamma^{-1}\int_0^t \sum_{x\in\Lambda_\gamma}R_1\pare{x, \underline\sigma_\gamma\pare{s}}\pare{\pic{\sigma_{\gamma, 1}^x\pare{s}, a_1^{\mathfrak R}F_{\mathfrak R}}-\pic{\sigma_{\gamma, 1}\pare{s}, a_1^{\mathfrak R}F_{\mathfrak R}}}^2 \\[5pt]
&\blanco{=\gamma^{-1}\int_0^t}+\sum_{x\in\Lambda_\gamma}R_2\pare{x, \underline\sigma_\gamma\pare{s}}\pare{\pic{\sigma_{\gamma, 2}^x\pare{s}, a_2^{\mathfrak R}F_{\mathfrak R}}-\pic{\sigma_{\gamma, 2}\pare{s}, a_2^{\mathfrak R}F_{\mathfrak R}}}^2\ds\\[5pt]
&=\int_0^t 4\gamma\sum_{x\in\Lambda_\gamma}R_1\pare{x, \underline\sigma_\gamma\pare{s}}\pare{a_1^{\mathfrak R}F_{\mathfrak R}\pare{\gamma x}+a_1^{\mathfrak I}F_{\mathfrak I}\pare{\gamma x}}^2 \\[5pt]
&\blanco{=\int_0^t}+4\gamma\sum_{x\in\Lambda_\gamma}R_2\pare{x, \underline\sigma_\gamma\pare{s}}\pare{a_2^{\mathfrak R}F_{\mathfrak R}\pare{\gamma x}+a_2^{\mathfrak I}F_{\mathfrak I}\pare{\gamma x}}^2\ds.
\end{aligned}
\end{align}
Call $h_\gamma\pare{t}$ the integrand appearing in the last expression.
By \eqref{a14}, the proof follows if we prove that
\begin{align}\label{poiuyt}
\sup_{s\in\corch{0,t}}\abs{h_{\gamma}\pare{s}-h\pare{s}}\xrightarrow[\gamma\to 0]{\BB P_2}0
\end{align}
for all $t>0$.
Before proving \eqref{poiuyt}, we prove that
\begin{align}\label{hjh}
\sup_{s\in [0,t]} \, \sup_{x\in \mathbb{T}_\gamma}\abs{\sigma_{\gamma, i}\pare{s}\ast \phi_i\pare{x}}\xrightarrow[\gamma\to 0]{\BB P_2}0
\end{align}
for all $i\in\llav{1,2}$.
Fix $\varepsilon>0$ and $N\in \BB N$ such that $\sum_{|k|> N}e^{-k^2c_1}<\frac{\varepsilon}{2}$.
Then
\begin{equation}
\begin{split}
\sup_{s\in [0,t]}\, \sup_{x\in \mathbb{T}_\gamma}\abs{\sigma_{\gamma, 1}\pare{s}\ast \phi_i\pare{x}}&=\sup_{s\in [0,t]} \, \sup_{x\in \mathbb{T}_\gamma}\abs{\sum_{k\in\mathbb{Z}}\hat{\phi}_{c_1}^{(k)}\hat\sigma_{\gamma,1}^{(k)}(s) e^{i2\pi kx}}\\[5pt]
&\leq\sum_{|k|\leq N}e^{-k^2c_1}\sup_{s\in[0,t]}\abs{\pic{\sigma_{\gamma, 1}(s), F^{(k)}}}+\frac{\varepsilon}{2}.
\end{split}
\end{equation}
By Corollary \ref{arjona}, we know that, for every $ k\in \BB Z$,
\begin{align}
\sup_{s\in[0,t]}\abs{\pic{\sigma_{\gamma, 1}(s), F^{(k)}}}\xrightarrow[\gamma\to 0]{\BB{P}_2}0,
\end{align}
and hence \eqref{hjh} holds for $i=1$ follows.
The proof for $i=2$ is similiar.
We proceed with the proof of \eqref{poiuyt}.
For $i\in\llav{1,2}$, call $G_i\pare{r}=a_i^{\mathfrak R}F_{\mathfrak R}\pare{r}+a_i^{\mathfrak I}F_{\mathfrak I}\pare{r}$.
By writing the coefficient $1$ in front of $R_1\pare{x, \underline \sigma_\gamma}$ as $\frac{1+\sigma_{\gamma, 1}\pare{x}}{2}+\frac{1-\sigma_{\gamma,1}\pare{x}}{2}$ and the coefficient $1$ in front of $R_2\pare{x, \underline \sigma_\gamma}$ as $\frac{1+\sigma_{\gamma, 2}\pare{x}}{2}+\frac{1-\sigma_{\gamma,2}\pare{x}}{2}$,
we obtain that
\begin{align}\label{hjk}
h_\gamma\pare{s}=&2\gamma\sum_{x\in \Lambda_\gamma}\pare{G_1\pare{\gamma x}}^2-2\gamma\sum_{x\in \Lambda_\gamma}\sigma_{\gamma, 1}\pare{x,s}\tanh\pare{\beta_1\pare{\sigma_{\gamma, 1}\ast\phi_1}\pare{x}+\beta_1\lambda\sigma_{\gamma, 2}\pare{x}}\pare{G_1\pare{\gamma x}}^2\\[5pt]
&+2\gamma\sum_{x\in \Lambda_\gamma}\pare{G_2\pare{\gamma x}}^2-2\gamma\sum_{x\in \Lambda_\gamma}\sigma_{\gamma, 2}\pare{x,s}\tanh\pare{\beta_2\pare{\sigma_{\gamma, 2}\ast\phi_2}\pare{x}-\beta_2\lambda\sigma_{\gamma, 1}\pare{x}}\pare{G_2\pare{\gamma x}}^2.
\end{align}
Proceeding as before, we get
\begin{align}\label{h5}
\begin{aligned}
h_\gamma\pare{s}
=&2\gamma\sum_{x\in \Lambda_\gamma}\pare{G_1\pare{\gamma x}}^2\\[5pt]
&-\gamma\sum_{x\in{\Lambda}_\gamma}\sigma_{\gamma, 1}(x,s)\corch{\tanh\pare{\beta_1\pare{\sigma_{\gamma, 1}\ast\phi_1}\pare{x}+\beta_1\lambda}+\tanh\pare{\beta_1\pare{\sigma_{\gamma, 1}\ast\phi_1}\pare{x}-\beta_1\lambda}}\pare{G_1\pare{\gamma x}}^2\\[5pt]
&-\gamma\sum_{x\in  \Lambda_\gamma}\eta_{\gamma}(x,s)\corch{\tanh\pare{\beta_1\pare{\sigma_{\gamma, 1}\ast\phi_1}\pare{x}+\beta_1\lambda}-\tanh\pare{\beta_1\pare{\sigma_{\gamma, 1}\ast\phi_1}\pare{x}-\beta_1\lambda}}\pare{G_1\pare{\gamma x}}^2\\[5pt]
&+2\gamma\sum_{x\in \Lambda_\gamma}\pare{G_2\pare{\gamma x}}^2\\[5pt]
&-\gamma\sum_{x\in{\Lambda}_\gamma}\sigma_{\gamma, 2}(x,s)\corch{\tanh\pare{\beta_2\pare{\sigma_{\gamma, 2}\ast\phi_2}\pare{x}+\beta_2\lambda}+\tanh\pare{\beta_2\pare{\sigma_{\gamma, 2}\ast\phi_2}\pare{x}-\beta_2\lambda}}\pare{G_1\pare{\gamma x}}^2\\[5pt]
&+\gamma\sum_{x\in  \Lambda_\gamma}\eta_{\gamma}(x,s)\corch{\tanh\pare{\beta_2\pare{\sigma_{\gamma, 2}\ast\phi_2}\pare{x}+\beta_2\lambda}-\tanh\pare{\beta_2\pare{\sigma_{\gamma, 2}\ast\phi_1}\pare{x}-\beta_2\lambda}}\pare{G_2\pare{\gamma x}}^2.
\end{aligned}
\end{align}
Observe that
\begin{align}\label{h6}
2\gamma\sum_{x\in \Lambda_\gamma}\pare{G_1\pare{\gamma x}}^2+2\gamma\sum_{x\in \Lambda_\gamma}\pare{G_2\pare{\gamma x}}^2\xrightarrow[\gamma\to 0]{}2\int_{\BB T}G_1\pare{r}^2\dr+2\int_{\BB T}G_2\pare{r}^2\dr.
\end{align}
From the Lipschitzianity of the function $\tanh(\cdot)$, we know that there exists a constant $C_2$ such that
\begin{align}\label{a12}
\begin{aligned}
&\sup_{s\in[0,t]}\abs{\gamma\sum_{x\in\Lambda_\gamma}\sigma_{\gamma, 1}(x, s)\pare{\tanh\pare{\beta_1\pare{\sigma_{\gamma, 1}\ast\phi_1}\pare{x}+\beta_1\lambda}+\tanh\pare{\beta_1\pare{\sigma_{\gamma, 1}\ast\phi_1}\pare{x}-\beta_1\lambda}}\pare{G_1\pare{\gamma x}}^2}\\[5pt]
&\quad\leq C_2\sup_{s\in[0,t]}\,\sup_{x\in\Lambda_\gamma}\,\abs{\sigma_{\gamma, 1}\pare{s}\ast \phi_1\pare{x}}\int_{\BB T}G_1\pare{r}^2\dr\xrightarrow[\gamma\to 0]{\BB P_2}0.
\end{aligned}
\end{align}
Analogously, we get
\begin{align}\label{a154}
\begin{aligned}
&\sup_{s\in[0,t]}\abs{\gamma\sum_{x\in\Lambda_\gamma}\sigma_{\gamma, 2}(x, s)\pare{\tanh\pare{\beta_2\pare{\sigma_{\gamma, 1}\ast\phi_2}\pare{x}+\beta_2\lambda}+\tanh\pare{\beta_2\pare{\sigma_{\gamma, 2}\ast\phi_2}\pare{x}-\beta_2\lambda}}\pare{G_2\pare{\gamma x}}^2}\\[5pt]
&\quad\xrightarrow[\gamma\to 0]{\BB P_2}0.
\end{aligned}
\end{align}
To conclude, we just need to compute the limit in probability of the terms involving the correlation field in the right hand side of \eqref{h5}. Observe that
\begin{align}\label{019}
\begin{aligned}
&\sup_{s\in [0, t]}\Big|\gamma\sum_{x\in  \Lambda_\gamma}\eta_{\gamma}(x,s)\pare{\tanh\pare{\beta_1\pare{\sigma_{\gamma, 1}\ast\phi_1}\pare{x}+\beta_1\lambda}-\tanh\pare{\beta_1\pare{\sigma_{\gamma, 1}\ast\phi_1}\pare{x}-\beta_1\lambda}}\pare{G_1\pare{\gamma x}}^2\\[5pt]
&\blanco{\sup_{s\in [0, t]}\bigg|}-\tanh\pare{\beta_1\lambda}\pare{\tanh\pare{\beta_1\lambda}-\tanh\pare{\beta_2\lambda}}\pare{1-e^{-2s}}\int_{\BB T}G_1\pare{r}^2\dr\bigg|\\[5pt]
&\quad\leq 2C_2\sup_{s\in [0, t]}\sup_{x\in\Lambda_\gamma}\abs{\sigma_{\gamma, 1}\pare{s}\ast \phi_1\pare{x}}\\[5pt]
&\quad\blanco{\le}+\sup_{s\in [0, t]}\bigg|2\gamma\sum_{x\in\Lambda_\gamma}\eta_\gamma\pare{x,s}\tanh\pare{\beta_1\lambda}
\\[5pt]
&\quad\blanco{\le+\sup_{s\in [0, t]}\bigg|}
-\tanh\pare{\beta_1\lambda}\corch{\tanh\pare{\beta_1\lambda}
-\tanh\pare{\beta_2\lambda}}\pare{1-e^{-2s}}\int_{\BB T}G_1\pare{r}^2\dr\bigg|
\\[5pt]
&\quad\blanco{\le}\xrightarrow[\gamma\to 0]{}0.
\end{aligned}
\end{align}
The last convergence is a consequence of \eqref{hjh} and Corollary \ref{arjona}. Analogously, we can prove that
\begin{align}\label{020}
\begin{aligned}
&\sup_{s\in [0, t]}\Big|\gamma\sum_{x\in  \Lambda_\gamma}\eta_{\gamma}(x,s)\pare{\tanh\pare{\beta_2\pare{\sigma_{\gamma, 2}\ast\phi_2}\pare{x}+\beta_2\lambda}-\tanh\pare{\beta_2\pare{\sigma_{\gamma, 2}\ast\phi_2}\pare{x}-\beta_2\lambda}}\pare{G_2\pare{\gamma x}}^2\\[5pt]
&\blanco{\sup_{s\in [0, t]}\Big|}-\tanh\pare{\beta_2\lambda}\pare{\tanh\pare{\beta_1\lambda}-\tanh\pare{\beta_2\lambda}}\pare{1-e^{-2s}}\int_{\BB T}G_2\pare{r}^2\dr\Big|
\\[5pt]
&\quad\xrightarrow[\gamma\to 0]{}0.
\end{aligned}
\end{align}
For $i\in\llav{1,2}$, $\int_{\BB T}G_i\pare{r}^2\dr=\pi\corch{\pare{a_i^{\mathfrak R}}^2+\pare{a_i^{\mathfrak I}}^2}$ as $\int_{\BB T}F_{\mathfrak R}\pare{r}F_{\mathfrak I}\pare{r}\dr=0$. Then \eqref{h5}-\eqref{020} allow us to conclude \eqref{poiuyt}.
\end{proof}
Lemma \ref{lkj88} implies item 2.
Observe that $I_0\pare{t}$ converges in distribution to 
$I_0\pare{\infty}\sim  N\Bigg(0, \sum_{\#\in\llav{\mathfrak R, \mathfrak I}}\sum_{i\in\llav{1,2}}\pare{a_i^{\#}}^2\Var\pare{V_i^{\#}}\Bigg)$ as 
$\lim_{t\to \infty}H\pare{t}= \sum_{\#\in\llav{\mathfrak R, \mathfrak I}}\sum_{i\in\llav{1,2}}\pare{a_i^{\#}}^2\Var\pare{V_i^{\#}}$.
Fix $a\in \BB R$ and $\varepsilon>0$.
The proof of item 2 follows if we show that there exists a $\bar\gamma$ such that  
\begin{align}\label{258}
\abs{\BB P_2\pare{I_\gamma\pare{\infty}\leq a}- \BB P_2\pare{I_0\pare{\infty}\leq a}}<\varepsilon
\end{align}
for every $\gamma<\bar \gamma$.
Fix $T>0$ such that 
\begin{align}
&\abs{\BB P_2\pare{I_0\pare{T}\leq a}- \BB P_2\pare{I_0\pare{\infty}\leq a}}<\frac{\varepsilon}{3},
\\[5pt]
&\BB P_2\pare{a-e^{-\frac{\mu}{2}T}\leq I_0\pare{T}\leq a}<\frac{\varepsilon}{24},  
\\[5pt]
&\textnormal{and } C_1e^{-\mu_1 T}<\frac{\varepsilon}{12},
\end{align}
where $C_1$ is a constant which will be specified later (the last choice of $T$ can be done because the limiting distribution $I_0\pare{\infty}$ is continuous).
Then
\begin{align}\label{259}
\begin{aligned}
\abs{\BB P_2\pare{I_\gamma\pare{\infty}\leq a}- \BB P_2\pare{I_0\pare{\infty}\leq a}}\leq&\abs{\BB P_2\pare{I_\gamma\pare{\infty}\leq a}- \BB P_2\pare{I_\gamma\pare{T}\leq a}}\\[5pt]
&+\abs{\BB P_2\pare{I_\gamma\pare{T}\leq a}- \BB P_2\pare{I_0\pare{T}\leq a}}\\[5pt]
&+\abs{\BB P_2\pare{I_0\pare{T}\leq a}- \BB P_2\pare{I_0\pare{\infty}\leq a}}.
\end{aligned}
\end{align}
It is easily seen that there exists $\gamma_1$ such that 
\begin{align}
\begin{aligned}
\abs{\BB P_2\pare{I_\gamma\pare{T}\leq a}-\BB  P_2\pare{I_0\pare{T}\leq a}}+\abs{\BB P_2\pare{I_0\pare{T}\leq a}-\BB P_2\pare{I_0\pare{\infty}\leq a}}<\frac{2}{3}\varepsilon
\end{aligned}
\end{align}
for all $\gamma<\gamma_1$.
By \eqref{259}, it is enough to prove that there exists $\gamma_2$ such that  
\begin{align}\label{300}
\begin{aligned}
\abs{\BB P_2\pare{I_\gamma\pare{\infty}\leq a}- \BB P_2\pare{I_\gamma\pare{T}\leq a}}<\frac{\varepsilon}{3}
\end{aligned}
\end{align}
for all $\gamma<\gamma_2$.
Call $\CAL A_{\gamma, T}:=\llav{\omega_2:\abs{I_\gamma\pare{\infty}- I_\gamma\pare{T}}>e^{-\frac{\mu_1}{2} T}}$.
By Chebysev inequality, Ito's isometry, and \eqref{09}, it is possible to prove that there exists a constant $C_1$ such that
\begin{align}\label{400}
\BB P_2\pare{\CAL A_{\gamma, T}}\leq C_1 e^{-\mu T}
\end{align}
for all $\gamma>0$.
By \eqref{400}, we get
\begin{align}\label{ghg}
\begin{aligned}
&\abs{\BB P_2\pare{I_\gamma\pare{\infty}\leq a}-\BB P_2\pare{I_\gamma\pare{T}\leq a}}
\\[5pt]
&\quad\leq 2C_1 e^{-\mu T}+\BB P_2\pare{I_\gamma\pare{T}
\leq a, I_\gamma\pare{\infty}>a, \CAL A_{\gamma, T}^c}\\[5pt]
&\quad\blanco{\le}+P_2\pare{I_\gamma\pare{T}> a, I_\gamma\pare{\infty}\leq a, \CAL A_{\gamma, T}^c}\\[5pt]
&\quad\leq\frac{\varepsilon}{6}+2\BB P_2\pare{a-e^{-\frac{\mu}{2}T}\leq I_\gamma\pare{T}\leq a}\\[5pt]
&\quad\leq\frac{\varepsilon}{6}+2\abs{\BB P_2\pare{a-e^{-\frac{\mu}{2}T}\leq I_\gamma\pare{T}\leq a}-\BB P_2\pare{a-e^{-\frac{\mu}{2}T}\leq I_0\pare{T}\leq a}}\\[5pt]
&\quad\blanco{\le}+2\BB P_2\pare{a-e^{-\frac{\mu}{2}T}\leq I_0\pare{T}\leq a}.
\end{aligned}
\end{align}
Our choice of $\gamma_2$ let us bound last expression by $\frac{\varepsilon}{3}$, obtaining \eqref{300}.

\item[\textit{(c)}] 
We use the construction $\Omega=\Omega_1\times \Omega_2$ defined in item \textit{(a)}.
We prove that $\ul{U}$ and $\ul{V}$ are independent.
Let
$
\ul U_\gamma$ and $\ul V_\gamma$ respectively be $\gamma^{-\frac{1}{2}}\ul X_\gamma\pare{0}$ and 
$\gamma^{-\frac{1}{2}}\int_0^{t_\theta}e^{-s\mu}d\ul M_\gamma\pare{s}$ thought as $\BB R^4$-random vectors. 
Let $A,B\subset \BB R^4$ respectively of the form $\prod_{i=1}^4\left(-\infty,a_i\right]$ and $\prod_{i=1}^4\left(-\infty,b_i\right]$.
We are done if we prove that
\begin{align}
\BB P\corch{\ul{U}_\gamma\in A,\ul{V}_\gamma\in B}\conv{\gamma\to 0}
\BB P\corch{\ul{U}\in A}\BB P\corch{\ul{V}\in B},
\end{align}
where again we think $\ul U$ and $\ul V$ as  $\BB R^4$-random vectors.
We have
\begin{align}
\begin{aligned}\nonumber
\BB P\corch{\ul{U}_\gamma\in A,\ul{V}_\gamma\in B}&
=\int \BB P_1\pare{d\omega_1}\int \BB P_2\pare{d\omega_2}
\textbf{1}\llav{\ul U_\gamma\corch{\omega_1}\in A}\textbf{1}\llav{\ul V_\gamma\corch{\omega_1,\omega_2}\in B}
\\[5pt]
&=\int \BB P_1\pare{d\omega_1}f_{A,\gamma}\corch{\omega_1}g_{B,\gamma}\corch{\omega_1}
\end{aligned}
\end{align}
with $f_{A,\gamma}\corch{\omega_1}\defi \textbf{1}\llav{\ul U_\gamma\corch{\omega_1}\in A}$ and $g_{B,\gamma}\corch{\omega_1}\defi \int \BB P_2\pare{d\omega_2}\textbf{1}\llav{\ul V_\gamma\corch{\omega_1,\omega_2}\in B}$.
Convergence $g_{B,\gamma}\corch{\omega_1}\conv{\gamma\to 0}g_B$ holds for $\BB P_1$-almost every $\omega_1$, where $g_B$ does not depend on $\omega_1$.
As $f_{B,\gamma}$ is uniformly bounded (by $1$),
\begin{align}
\abs{
\int \BB P_1\pare{d\omega_1}f_{A,\gamma}\corch{\omega_1}g_{B,\gamma}\corch{\omega_1}
-g_B\int \BB P_1\pare{d\omega_1}f_{A,\gamma}\corch{\omega_1}}
\le
\int \BB P_1\pare{d\omega_1}\abs{g_{B,\gamma}\corch{\omega_1}-g_B};
\end{align}
the later integral vanishes because of the dominated convergence theorem.
Using that $\int \BB P_1\pare{d\omega_1}f_{A,\gamma}\corch{\omega_1}\conv{\gamma}\BB P\corch{\ul U\in A}$ thanks to the convergence in distribution of $\ul U_\gamma$ to $\ul U$, we get 
\begin{align}
\int \BB P_1\pare{d\omega_1}f_{A,\gamma}\corch{\omega_1}g_{B,\gamma}\corch{\omega_1}\conv{\gamma\to 0}g_B \BB P\corch{\ul U\in A}.
\end{align}

\end{itemize}

\subsection{Proof of Theorem \eqref{2}}

We prove that, for $k=\pm 1$,
\begin{align}\label{fine}
\lim_{\delta\to 0}\liminf_{\gamma\to 0}
\BB P\pare{\norm{\ul X_\gamma^{(k)}\pare{t_c-T_\delta}}>\delta}=1,
\end{align}
where $t_c=\frac{1}{2\mu}\log\gamma^{-1}$ and $T_\delta=\frac{1}{2\mu}\log\delta^{-1}$.
We only prove it for $k=1$.
For this purpose, we define three events $\SCR{A}_\gamma$, $\SCR{B}_{\delta,\gamma}$ and $\SCR C_{\delta,\gamma}$ that satisfy $\SCR{A}_\gamma\cap \SCR{B}_{\delta,\gamma}\cap \SCR C_{\delta,\gamma}\subset \pare{\norm{\ul X_\gamma^{(k)}\pare{t_c-T_\delta}}>\delta}$ for $\delta$ and $\gamma$ small enough, and 
\begin{align}\label{tero0}
\lim_{\delta\to 0}\liminf_{\gamma\to 0}
\BB P\pare{\SCR{A}_\gamma\cap \SCR{B}_{\delta,\gamma}\cap \SCR C_{\delta,\gamma}}=1.
\end{align}

The event $\SCR A_\gamma$ is defined as
\begin{align}
\begin{aligned}
\mathscr A_{\gamma}
=\corch{\sup_{t\in [0,\infty)}\norm{\int_0^t e^{\pare{t-s} A^{\pare{k}}}\ul M_\gamma^{\pare{k}}\pare{ds}}\leq k^2 \gamma^{\frac{1}{4}}\quad\forall k\in\BB Z\setminus\llav{\pm 1}, \norm{\ul{X}_\gamma^{\pare{k}}\pare{0}}\leq k^2 \gamma^{\frac{1}{4}}\quad\forall k\in\BB Z}.
\end{aligned}
\end{align}
Lemma \ref{PPP} guarantees
\begin{align}\label{tero1}
\BB P\pare{\SCR A_\gamma}\conv{\gamma\to 0}1.
\end{align}
The event $\SCR C_{\delta,\gamma}$ is defined as $\corch{\norm{\ul{\tilde X}^{\pare{1}}_\gamma\pare{t_c-T_\delta}}>\delta^{\frac{3}{4}}}$. 
It satisfies
\begin{align}\label{tero2}
\lim_{\delta\to 0}\liminf_{\gamma\to 0}\BB P\pare{\SCR C_{\delta,\gamma}}=1
\end{align}
because $\tilde{\ul X}_\gamma\pare{t_c}$ has the same limiting distribution than $\gamma^{\frac{\theta}{2}-\frac{1}{2}}\tilde{\ul X}_\gamma\pare{t_\theta}$ 
and $\tilde{\ul X}_\gamma^{\pare{1}}\pare{t_c-T_\delta}$ the same limiting distribution than $\delta^{\frac{1}{2}}\tilde{\ul X}_\gamma^{\pare{1}}\pare{t_c}$.
To define $\SCR B_{\delta,\gamma}$, we need to introduce some random-variables.
For $k\in\BB Z$, let
\begin{align}
&a_\gamma^{(k)}\defi \gamma^{-\frac{1}{2}}\int_0^\infty e^{-sA^{\pare{k}}}\ul M_\gamma^{\pare{k}}\mm\pare{ds}
\\[5pt]
&b_\gamma^{\pare{k}}\defi\sup_{t\ge 0}\norm{\gamma^{-\frac{1}{2}}\int_t^\infty e^{-sA^{\pare{k}}}\ul M_\gamma^{\pare{k}}\mm\pare{ds}}
\\[5pt]
&r_\gamma^{(k)}\defi \gamma^{-\frac{1}{2}}\ul X_\gamma^{\pare{k}}\mm\pare{0}.
\end{align}
Then $\SCR B_{\delta,\gamma}$ is defined as $\SCR B_{\delta,\gamma}\defi\corch{\delta^{\frac{1}{4}}<R_\gamma<\delta^{-\frac{1}{16}}}$
with
\begin{align}
R_\gamma\defi \max\llav{\norm{a_\gamma^{\pare{-1}}},\norm{a_\gamma^{\pare{1}}}}+
\max\llav{b_\gamma^{(-1)},b_\gamma^{(1)}}+\max\llav{\norm{r_\gamma^{(-1)}},\norm{r_\gamma^{(1)}}}.
\end{align}
Once we have \eqref{tero1} and \eqref{tero2}, \eqref{tero0} follows if we prove
\begin{align}\label{house}
\lim_{\delta\to 0}\liminf_{\gamma\to 0}\BB P\pare{\SCR B_{\delta,\gamma}}=1;
\end{align}
we postpone the proof of this fact to the end of the subsection.

From now on, we suppose we are in $\SCR A_\gamma\cap\SCR B_{\delta,\gamma}$.
An intermediate step will be to prove that the stopping time
\begin{align}
t^*\defi \inf\llav{t\ge 0:\sup_{\abs{k}\le \gamma^{-\frac{3}{4}}}\norm{\ul E_\gamma^{\pare{k}}\pare{t}}>DR_\gamma^2\gamma e^{2\mu t}}
\end{align}
is strictly larger than $t_c-T_\delta$ if $D$ is large enough.
We give a name to the expression appearing in the definition of $t^*$:
\begin{align}
g_\gamma\pare{t}\defi DR_\gamma^2\gamma e^{2\mu t}.
\end{align}

First observe that, for $k=\pm 1$, by Duhamel's formula and Lemma \ref{az}, there exists a constant $C_1$ such that
\begin{align}\label{oa1}
\norm{\ul X_\gamma^{\pare{k}}\mm\pare{t}}\le
C_1\corch{R_\gamma e^{t\mu}\gamma^{\frac{1}{2}}+g_\gamma\pare{t}}=
C_1\corch{D^{-\frac{1}{2}}g_\gamma\pare{t}^{\frac{1}{2}}+g_\gamma\pare{t}}
\end{align}
$\forall t\in\left[0,\min\llav{t^*,t_c-T_\delta}\right)$ (here we do not use that we are in the event $\SCR A_\gamma\cap\SCR B_{\delta,\gamma}$).
Also, if $\abs{k}\le \gamma^{-\frac{3}{4}}$ and $k\neq \pm 1$,
\begin{align}\label{oa2}
\norm{\ul X_\gamma^{\pare{k}}\pare{t}}\le C_2\corch{k^2\gamma^{\frac{1}{4}}+g_\gamma\pare{t}}
\end{align}
for every $t\in\left[0,\min\llav{t^*,t_c-T_\delta}\right)$;
here we used Duhamel's formula,
Lemma \ref{az}, and the fact that we are in $\SCR A_\gamma$.
If $C_3$ is the constant of lemma \ref{bound1}, we have
\begin{align}\label{haydn}
\sup_{\abs{k}\le \gamma^{-\frac{3}{4}}}\norm{\ul E_\gamma^{\pare{k}}\pare{t}}
\le
C_3\corch{\gamma^{\frac{1}{4}}+\sum_{j\in\BB Z}e^{-c_1j^2}\norm{\ul X_\gamma^{\pare{k}}\pare{t}}}^2.
\end{align}
For the term inside the square brackets,
divide the sum as $\sum_{j\in\BB Z}=\sum_{j=\pm 1}+\sum_{\substack{\abs{j}\le \gamma^{-\frac{3}{4}}\\[3pt] j\neq \pm 1}}+\sum_{\abs{j}>\gamma^{-\frac{3}{4}}}$,
and use estimate \eqref{oa1} for the first term, estimate \eqref{oa2} for the second one, and estimate $\sum_{\abs{j}>\gamma^{-\frac{3}{4}}}e^{-c_1j^2}\norm{\ul X_\gamma^{\pare{j}}\pare{t}}\le C_4\gamma^{\frac{1}{4}}$ for the third one, to get
\begin{align}
\gamma^{\frac{1}{4}}+\sum_{j\in\BB Z}e^{-c_1j^2}\norm{\ul X_\gamma^{\pare{k}}\pare{t}}
\le C_5\corch{\gamma^{\frac{1}{4}}+R_\gamma e^{t\mu}\gamma^{\frac{1}{2}}+g_\gamma\pare{t}}
=C_5\corch{\gamma^{\frac{1}{4}}+D^{-\frac{1}{2}}g_\gamma\pare{t}^{\frac{1}{2}}+g_\gamma\pare{t}}
\end{align}
for a proper constant $C_5$.
Going back to \eqref{haydn}, we get
\begin{align}
\sup_{\abs{k}\le \gamma^{-\frac{3}{4}}}\norm{\ul E_\gamma^{\pare{k}}\pare{t}}
\le
C_6\corch{\gamma^{\frac{1}{4}}+D^{-\frac{1}{2}}g_\gamma\pare{t}^{\frac{1}{2}}+g_\gamma\pare{t}}^2\fide f_\gamma\pare{t}.
\end{align}
As before, we get $t^*>t_c-T_\delta$ once we have $f_\gamma\pare{t_c-T_\delta}<g_\gamma\pare{t_c-T_\delta}$ or, equivalently, 
\begin{align}
C_7\gamma^{\frac{1}{4}}+
C_7DR_\gamma^2\delta
<D^{\frac{1}{2}}R_\gamma \delta^{\frac{1}{2}}\pare{1-C_7D^{-\frac{1}{2}}},
\end{align}
where $C_7=C_6^{\frac{1}{2}}$.
Take $D$ large enough such that $1-C_7D^{-\frac{1}{2}}>\frac{1}{2}$.
As we are in $\SCR B_{\delta,\gamma}$, the later inequality is attained if
\begin{align}
C_7\gamma^{\frac{1}{4}}+C_7 D\delta^{\frac{7}{8}}<\frac{1}{2}D^{\frac{1}{2}}\delta^{\frac{3}{4}},
\end{align}
that holds for $\delta$ and $\gamma$ small enough.
Summing up, we proved that, for $D$ large enough (but only depending on the macroscopic parameters) and $\delta$ and $\gamma$ small enough,
\begin{align}
\norm{\ul E^{\pare{1}}_\gamma\pare{t_c-T_\delta}}\le g_\gamma\pare{t_c-T_\delta}=DR_\gamma^2\delta.
\end{align}
Comparing Duhamel's formulas,
using Lemma \ref{az}, and that we are in $\SCR B_{\delta,\gamma}$,
we get
\begin{align}
\norm{\ul X_\gamma\pare{t_c-T_\delta}-\tilde{\ul X}_\gamma\pare{t_c-T_\delta}}\le
C_8\int_0^{t_c-T_\delta}e^{\pare{t_c-T_\delta}\mu}g_\gamma\pare{t_c-T_\delta}ds
\le
C_8 D\delta^{\frac{7}{8}}
\end{align}
for a constant $C_8$.
Then
\begin{align}
\norm{\ul X_\gamma^{\pare{1}}\pare{t_c-T_\delta}}\ge \norm{\tilde{\ul X}_\gamma^{\pare{1}}\pare{t_c-T_\delta}}-C_8 D\delta^{\frac{7}{8}}.
\end{align}
As we are in $\SCR C_{\delta,\gamma}$, the later quantity is bounded by $C_9\delta^{\frac{3}{4}}$;
this implies
\begin{align}
\lim_{\delta\to 0}\liminf_{\gamma\to 0}
\BB P\pare{\norm{\ul X_\gamma^{(k)}\pare{t_c-T_\delta}}>C_9\delta^{\frac{3}{4}}}=1,
\end{align}
that is equivalent to \eqref{fine}.

\begin{proof}[Proof of \eqref{house}]
As $R_\gamma\ge \max\llav{\norm{r_\gamma^{(-1)}},\norm{r_\gamma^{(1)}}}$,
we do not need to worry about the lower bound.
The upper bounds for $a_\gamma^{\pare{1}}$ and $r_\gamma^{\pare{1}}$ follow from Lemma \ref{PPP} and the fact that we know the limiting distribution of $\gamma^{-\frac{1}{2}}\ul X_\gamma^{\pare{k}}\mm\pare{0}$.
Then we are done if we prove that
\begin{align}
\lim_{\delta} \, \liminf_\gamma \, \BB P\corch{b_\gamma^{\pare{1}}< \delta^{-\frac{1}{16}}}=1.
\end{align}
Proceeding as in the proof of Lemma \ref{PPP}, we can reduce the problem to controling
\begin{align}
\sup_{t\ge 0} \, \abs{\gamma^{-\frac{1}{2}}\int_t^\infty e^{-\mu s}d\# M^{\pare{1}}_{\gamma,i}\pare{s}}
\end{align}
for every $i\in\llav{1,2}$ and $\#\in\llav{\mathfrak{R},\mathfrak{I}}$.
We have
\begin{align}
\sup_{t\ge 0} \, \abs{\gamma^{-\frac{1}{2}}\int_t^\infty e^{-\mu s}d\# M^{\pare{1}}_{\gamma,i}\pare{s}}
\le
\abs{\gamma^{-\frac{1}{2}}\int_0^\infty e^{-\mu s}d\# M^{\pare{1}}_{\gamma,i}\pare{s}}+
\sup_{t\ge 0} \, \abs{\gamma^{-\frac{1}{2}}\int_0^t e^{-\mu s}d\# M^{\pare{1}}_{\gamma,i}\pare{s}}.
\end{align}
As $\gamma^{-\frac{1}{2}}\int_0^t e^{-\mu s}d\# M^{\pare{1}}_{\gamma,i}\pare{s}$ is a martingale, we can control last expresion by the use of Doob's maximal inequality and Itô's isometry as we did before.
\end{proof}

\subsection{Proof of Corollary \ref{mozart}}

The result follows once we show that
\begin{itemize}
\item[1.]
$\abs{\gamma^{\frac{\theta}{2}-\frac{1}{2}}\pic{\sigma_{\gamma,i}\pare{t_\theta}, G}-\corch{\hat G\pare{1}\gamma^{\frac{\theta}{2}-\frac{1}{2}} X_{\gamma,i}^{\pare{1}}\pare{t_\theta}+\hat G\pare{-1}\gamma^{\frac{\theta}{2}-\frac{1}{2}} X_{\gamma,i}^{\pare{-1}}\pare{t_\theta}}}\conv{\gamma\to 0}0$ in $\BB P$-probability, and
\item[2.] $\hat G\pare{1}\gamma^{\frac{\theta}{2}-\frac{1}{2}}\underline X_\gamma^{\pare{1}}\pare{t_\theta}+\hat G\pare{-1}\gamma^{\frac{\theta}{2}-\frac{1}{2}}\underline X_\gamma^{\pare{-1}}\pare{t_\theta}$ has the desired limiting distribution.
\end{itemize}
We start by proving the first item.
We have
\begin{align}
&\abs{\gamma^{\frac{\theta}{2}-\frac{1}{2}}\pic{\sigma_{\gamma, i}\pare{t_\theta}, G}-\pare{\hat G\pare{1}\gamma^{\frac{\theta}{2}-\frac{1}{2}} X_{\gamma, i}^{\pare{1}}\pare{t_\theta}+\hat G\pare{-1}\gamma^{\frac{\theta}{2}-\frac{1}{2}} X_{\gamma,i}^{\pare{-1}}\pare{t_\theta}}}
\\[5pt]
&\quad\le\abs{\gamma^{\frac{\theta}{2}-\frac{1}{2}}\pic{\sigma_{\gamma, i}\pare{t_\theta}, G}-\gamma^{\frac{\theta}{2}-\frac{1}{2}}\pic{\sigma_{\gamma, 1}\pare{t_\theta}, \sum_{\abs{k}\leq \gamma^{-\frac{1}{8}\pare{1-\theta}}}\hat G\pare{k} F^{\pare{k}}}}
\\[5pt]
&\quad\blanco{\le}+\abs{\gamma^{\frac{\theta}{2}-\frac{1}{2}}\pic{\sigma_{\gamma, i}\pare{t_\theta}, \sum_{\abs{k}\leq \gamma^{-\frac{1}{8}\pare{1-\theta}}}\hat G\pare{k} F^{\pare{k}}}-\pare{\hat G\pare{1}\gamma^{\frac{\theta}{2}-\frac{1}{2}} X_{\gamma, i}^{\pare{1}}\pare{t_\theta}+\hat G\pare{-1}\gamma^{\frac{\theta}{2}-\frac{1}{2}} X_{\gamma,i}^{\pare{-1}}\pare{t_\theta}}}
\\[5pt]
\label{oki}
&\quad= \gamma^{\frac{\theta}{2}-\frac{1}{2}}\abs{\pic{\sigma_{\gamma, i}\pare{t_\theta}, \sum_{\abs{k}> \gamma^{-\frac{1}{8}\pare{1-\theta}}}\hat G\pare{k} F^{\pare{k}}}}+\gamma^{\frac{\theta}{2}-\frac{1}{2}}\sum_{\substack{\abs{k}\leq \gamma^{-\frac{1}{8}\pare{1-\theta}} \\[3pt] k\neq\pm 1}}\abs{\hat G\pare{k}}\abs{X_{\gamma, i}^{\pare{k}}\pare{t_\theta}}.
\end{align}
The first addend in the last expression vanishes because, since $G$ is $C^\infty$, it satisfies $\sum_{k\in \BB Z} \abs{\hat G\pare{k}}\abs{k}^n<\infty$ for every $n\in\BB N$;
the second one, as a  consequence of Proposition \ref{propl}.

To prove the second item, observe that, by decomposing $\hat G\pare{1}$ in real and imaginary parts, we get
\begin{align}\label{jhb}
\begin{aligned}
&\hat G\pare{1}\gamma^{\frac{\theta}{2}-\frac{1}{2}}X_{\gamma,i}^{\pare{1}}\pare{t_\theta}+\hat G\pare{-1}\gamma^{\frac{\theta}{2}-\frac{1}{2}} X_{\gamma,i}^{\pare{-1}}\pare{t_\theta}
\\[5pt]
&\quad =\mathfrak{R}\pare{\hat G\pare{1}}\gamma^{\frac{\theta}{2}-\frac{1}{2}}\pare{X_{\gamma,i}^{\pare{1}}\pare{t_\theta}
+X_{\gamma,i}^{\pare{-1}}\pare{t_\theta}}
+\mathtt i \mathfrak{I}\pare{\hat G\pare{1}}\gamma^{\frac{\theta}{2}-\frac{1}{2}}\pare{X_{\gamma,i}^{\pare{1}}\pare{t_\theta}-X_{\gamma,i}^{\pare{-1}}\pare{t_\theta}}
\\[5pt]
&\quad=2\mathfrak{R}\pare{\hat G\pare{1}}\gamma^{\frac{\theta}{2}-\frac{1}{2}}\mathfrak{R}\pare{X_{\gamma,i}^{\pare{1}}\pare{t_\theta}}
+2\mathfrak{I}\pare{\hat G\pare{1}}\gamma^{\frac{\theta}{2}
-\frac{1}{2}}\mathfrak{I}\pare{X_{\gamma,i}^{\pare{1}}\pare{t_\theta}}.
\end{aligned}
\end{align}
If we call $Z_i$ the limiting distribution of $\gamma^{\frac{\theta}{2}-\frac{1}{2}}X_{\gamma,i}^{\pare{1}}\pare{t_\theta}$ given in Theorem \ref{1},
last expression converges in distribution to  $\pic{2\corch{Z_i}_{\mathfrak{R}}\cos\pare{2\pi\cdot}+2\corch{Z_i}_{\mathfrak{I}}\sin\pare{2\pi\cdot}, G}=\pic{2\abs{Z_i}\cos\pare{2\pi\cdot+\Phi_1}, G}$,
where $\Phi_i=-\arctan\frac{\corch{Z_i}_{\mathfrak{I}}}{\corch{Z_i}_{\mathfrak{R}}}$.
In the previous identity, we used that $c_1\cos\xi+c_2\sin\xi=\sqrt{c_1^2+c_2^2}\cos\pare{\zeta-\arctan \frac{c_2}{c_1}}$.
The fact that $\Phi_i$ is uniform follows from the rotational invariance of the model.
We can conclude from the fact that the square of the euclidean norm of a Gaussian vector has gamma distribution.


\subsection{Proof of theorem \ref{hi77}}

Conditions $\mu_1^{\pare{k}}+\mu_2^{\pare{k}}=\tr^{\pare{k}}<0$ and $\mu_1^{\pare{k}}\mu_2^{\pare{k}}=\det^{\pare{k}}>0$ are necessary and sufficient for linear stability of the $k$-th Fourier mode.
This is obvious if the eigenvalues are real; if they have nonzero imaginary parts, the equivalence follows from identities $\tr^{\pare{k}}=2\mathfrak{R}\pare{\mu_1^{\pare{k}}}$ and $\det^{\pare{k}}=\abs{\mu_1^{\pare{k}}}^2$ (as $A^{\pare{k}}$ is real for every $k$, complex eigenvalues come in conjugate pairs).
In our case, these conditions read
\begin{align}\label{cond1}
\alpha_1 e^{-\tilde \tau_1k^2}+\alpha_2 e^{-\tilde \tau_2k^2}<2
\end{align}
and
\begin{align}\label{cond2}
\pare{\alpha_1e^{-\tilde \tau_1k^2}-1}\pare{1-\alpha_2e^{-\tilde \tau_2k^2}}<\tanh\pare{\lambda\beta_1}\tanh\pare{\lambda\beta_2}.
\end{align}
Also condition
\begin{align}\label{cond3}
\tn{det}^{\pare{k}}<0\iff
\pare{\alpha_1e^{-\tilde \tau_1k^2}-1}\pare{1-\alpha_2e^{-\tilde \tau_2k^2}}>\tanh\pare{\lambda\beta_1}\tanh\pare{\lambda\beta_2}
\end{align}
is a sufficient condition for linear  instability of the $k$-th Fourier mode (in particular, it implies that the eigenvalues are real).
Necessary and sufficient conditions for Turing instability are conditions \eqref{cond1} and \eqref{cond2} for $k=0$ and condition \eqref{cond3} for some $k\neq 0$.
As the sufficiency is obvious, we only prove the necessity.
Suppose then Turing instability occurs.
Conditions \eqref{cond1} and \eqref{cond2} for $k=0$ follow immediately.
Let $k_0\neq 0$ be one of the values where we have linear instability: $\mathfrak{R}\pare{\mu_1^{\pare{k_0}}}>0$.
Condition \eqref{cond1} for $k=0$ implies the same condition for any $k$ and, in particular, for $k_0$: $\mathfrak{R}\pare{\mu^{\pare{k_0}}_1}+\mathfrak{R}\pare{\mu^{\pare{k_0}}_2}=\tr^{\pare{k_0}}<0$.
Then 
$\mathfrak{R}\pare{\mu^{\pare{k_0}}_2}<0$
so $\mu^{\pare{k_0}}_1$ and $\mu^{\pare{k_0}}_2$ are real (their real parts are different so they are not pair conjugate), giving \eqref{cond3} for $k_0$.

\begin{itemize}
\item[\it{(i)}]
Assume Turing instability occurs.
Suppose $\alpha_1\ge \alpha_2$.
Condition \eqref{cond1} for $k=0$ implies $\alpha_2<1$.
If $\alpha_1\le 1$, then $\pare{\alpha_1e^{-\tilde \tau_1k^2}-1}\pare{1-\alpha_2e^{-\tilde \tau_2k^2}}<0$ for every $k$, so condition \eqref{cond3} cannot occur for any $k$; then $\alpha_1>1$.
Finally, suppose $\tau_2\le \tau_1$.
Think $k$ as a real variable and define $f\pare{k}\defi \pare{\alpha_1e^{-\tilde \tau_1k^2}-1}\pare{1-\alpha_2e^{-\tilde \tau_2k^2}}$.
We are done if we prove that $f\pare{k}\le f\pare{0}$ for every $k$ because, in this case, conditions \eqref{cond3} cannot occur for any $k$.
As $f$ is symmetric, it is enough to prove that it is decreasing for $k>0$.
Condition $f'\pare{k}\le 0$ is equivalent to
\begin{align}\label{c}
\tilde \tau_1+\tilde \tau_2\leq\frac{\tilde \tau_1e^{\tilde \tau_2 k^2}}{\alpha_2}+\frac{\tilde \tau_2e^{\tilde \tau_1 k^2}}{\alpha_1}.
\end{align}
Since the right-hand side of \eqref{c} is increasing and the left-hand side is constant,    the later condition holds if and only if
\begin{align}\label{d}
\tilde \tau_1+\tilde \tau_2\leq \frac{\tilde \tau_1}{\alpha_2}+\frac{\tilde \tau_2}{\alpha_1},
\end{align}
inequality that follows from ous assumptions.
The case $\alpha_1<\alpha_2$ is proved in a similar way.

\item[\it{(ii)}]
We will prove something stronger: \eqref{cond1} and \eqref{cond2} for $k=0$ and \eqref{cond3} for $k=1$.
As $\alpha_i<\beta_i$, condition \eqref{cond1} for $k=0$ follows from the hypothesis $\beta_1+\beta_2\le 2$.

Let $g\pare{\lambda}=\pare{\alpha_1-1}\pare{1-\alpha_2}$ with the domain extended also to $\lambda=0$.
As $\beta_2<1<\beta_1$, $g\pare{0}=\pare{\beta_1-1}\pare{1-\beta_2}>0$.
For $\lambda>0$, $g'\pare{\lambda}\le 0$ if and only if
\begin{align}\nonumber
\frac{\pare{\alpha_1-1}\alpha_2}{\alpha_1\pare{1-\alpha_2}}\le \frac{\beta_1}{\beta_2}\frac{\tanh\pare{\lambda\beta_1}}{\tanh\pare{\lambda\beta_2}}.
\end{align}
As $\beta_1>\beta_2$, the right-hand side is strictly larger than $1$, so the negativity of the derivative follows if the left-hand side is smaller than $1$ or, equivalently, if
\begin{align}\nonumber
2\le \frac{\corch{\cosh\pare{\lambda\beta_1}}^2}{\beta_1}+\frac{\corch{\cosh\pare{\lambda\beta_2}}^2}{\beta_2}.
\end{align}
As the hyperbolic cosine is larger than $1$, the later inequality follows from inequality $2\le \frac{1}{\beta_1}+\frac{1}{\beta_2}$, that follows form inequality $\beta_1+\beta_2\le 2$.
Once we know the function $\pare{\alpha_1-1}\pare{1-\alpha_2}$ is decreasing (and using that it is strictly positive at $\lambda=0$), we can define the parameter $\lambda^*=\lambda^*\pare{\beta_1,\beta_2}$ as the unique positive solution of identity
\begin{align}\nonumber
\pare{\alpha_1-1}\pare{1-\alpha_2}=\tanh(\beta_1\lambda)\tanh(\beta_2\lambda).
\end{align}

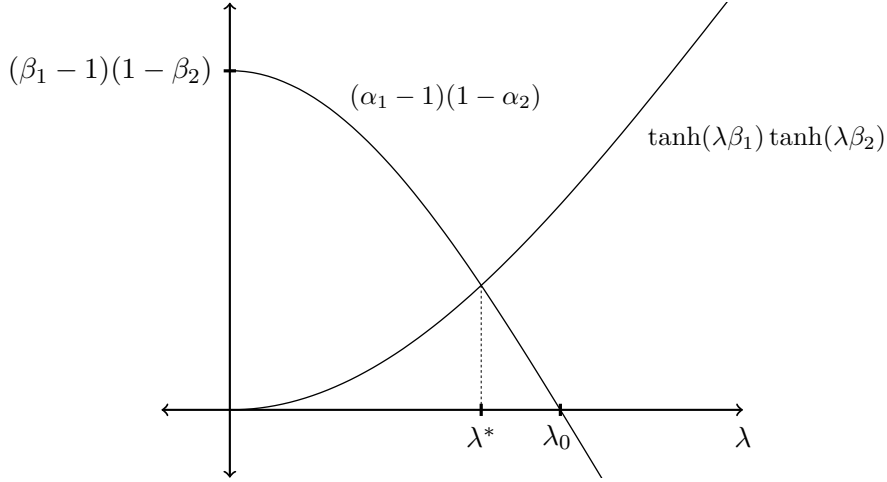
\begin{figure}[H]
\centering
\begin{tikzpicture}[thick,yscale=0.9, xscale=0.9]
\clip(-3.5,-1) rectangle (10,6);
\draw[<->] (-1,0) -- (7.5,0)  {};
\draw[<->] (0,-1) -- (0,6)  {};
\draw[-] [line width=1.2pt] (-0.09,4.995) -- (0.09,4.995)  {};
\draw[-] [line width=1.2pt] (4.83,-0.09) -- (4.83,0.09)  {};
\draw[-] [line width=1.2pt] (3.673,-0.09) -- (3.673,0.09)  {};
\draw[-] [line width=0.3pt] (3.673,0) -- (3.673,1.832)  [dash pattern=on 1pt off 1pt,domain=-4:4];
\draw  (7.5,-0.4) node {$\lambda$};
\draw  (4.78,-0.4) node {$\lambda_0$};
\draw  (3.673,-0.4) node {$\lambda^*$};
\draw  (3.15,4.6) node {\small $\pare{\alpha_1-1}\pare{1-\alpha_2}$};
\draw  (7.85,4) node {\small $\tanh\pare{\lambda\beta_1}\tanh\pare{\lambda\beta_2}$};
\draw  (-1.75,4.995) node {$\pare{\beta_1-1}\pare{1-\beta_2}$};
\draw[line width=0.5pt,xscale=11,yscale=20,domain=-0:5,samples=500,variable=\x] plot ({\x},{  (tanh(\x*1.1))*(tanh(\x*0.8)) });
\draw[line width=0.5pt,xscale=11,yscale=20,domain=-0:0.5,samples=50,variable=\x] plot ({\x},{  
((1.5)/((cosh(1.5*\x))^2)-1)*(1-(0.5)/((cosh(0.5*\x))^2))
 });
\end{tikzpicture}
\caption{The parameter $\lambda^*$.} \label{figure6}
\end{figure}

Let $\alpha_i^*=\left. \alpha_i\right|_{\lambda=\lambda^*}$.
Think $k$ as a continuous variable and let $f^*\pare{k}=\pare{\alpha^*_1e^{-\tilde \tau_1k^2}-1}\pare{1-\alpha^*_2e^{-\tilde \tau_2k^2}}$.
As $f^*$ is symmetric, we only need to analyze the case $k>0$.
Condition ${f^*}'\pare{k}>0$ is equivalent to
\begin{align}\label{uuuu}
\frac{\tilde \tau_1 e^{\tilde \tau_2 k^2}}{\alpha_2^{*}} +\frac{\tilde \tau_2 e^{\tilde \tau_1 k^2}}{\alpha_1^{*}}   < \tilde \tau_1+\tilde \tau_2.
\end{align}
We will ask for $f^*$ to be strictly increasing for $0<k\leq 1$.
As the functions $e^{\tilde \tau_1 k^2}$ and $e^{\tilde \tau_2 k^2}$ are increasing in $k$, and taking into consideration \eqref{uuuu}, a sufficient condition for this is
\begin{align}\nonumber
\frac{\tilde \tau_1 e^{\tilde \tau_2}}{\alpha_2^{*}} +\frac{\tilde \tau_2 e^{\tilde \tau_1}}{\alpha_1^{*}}   < \tilde \tau_1+\tilde \tau_2
\end{align}
or, equivalently,
\begin{align}\label{papa}
\frac{\pare{\alpha_2^*}^{-1}e^{\tilde \tau_2}-1}{\tilde \tau_2}
< \frac{1-\pare{\alpha_1^*}^{-1}e^{\tilde \tau_1}}{\tilde \tau_1}.
\end{align}
Chose $\tau_1$ and $\tau_2$ in order this inequality holds (this is always possible as the right-hand side goes to infinity as $\tau_1\downarrow 0$).
Under these assumptions, we have 
\begin{align}\nonumber
f^*\pare{0}&=\pare{\alpha_1^*-1}\pare{1-\alpha_2^*}=\tanh(\beta_1\lambda^*)\tanh(\beta_2\lambda^*)
\\[5pt]
&<f^*\pare{1}=\pare{\alpha_1^*e^{-\tilde \tau_1}-1}\pare{1-\alpha_2^*e^{-\tilde \tau_2}}.
\end{align}
By continuity, we can conclude after choosing $\lambda>\lambda^*$ as a perturbation of $\lambda^*$.

\item[\it{(iii)}]
In the previous item, conditions  $\beta_2<1<\beta_1$ and $\beta_1+\beta_2<2$ were used only for the good definition of $\lambda^*$.
Observe that if $\beta_2$ is close to $1$ also $\beta_1$ is in the sense that $\beta_2>1-\delta$ implies $\beta_1<1+\delta$.
As $\alpha_2<1$, $\pare{\alpha_1-1}\pare{1-\alpha_2}$ vanishes only if $\alpha_1-1$ does; let $\lambda_0$ be the unique value where it occurs (it is unique because $\alpha_1$ is decreasing in $\lambda$).
As $\lambda_0$ vanishes as $\beta_2$ goes to $1$ (because also $\beta_1$ goes to $1$), and as domination $\lambda^*\le\lambda_0$ holds, we conclude $\lambda^*$ also vanishes in this case.

\begin{figure}[H]
\centering
\begin{tikzpicture}[thick,yscale=0.9, xscale=0.9]
\path [fill=gray!35!] (2.85,2.85) -- (2.85,2) -- (3.7,2);
\draw[<->] (-1,0) -- (6.7,0)  {};
\draw[<->] (0,-1) -- (0,6.7)  {};
\draw[-] [line width=1.2pt] (-0.09,5.7) -- (0.09,5.7)  {};
\draw[-] [line width=1.2pt] (-0.09,2) -- (0.09,2)  {};
\draw[-] [line width=1.2pt] (-0.09,2.85) -- (0.09,2.85)  {};
\draw[-] [line width=1.2pt] (5.7,-0.09) -- (5.7,0.09)  {};
\draw[-] [line width=1.2pt] (2.85,-0.09) -- (2.85,0.09)  {};
\draw[-] [line width=0.3pt] (0,5.7) -- (5.7,0)  [dash pattern=on 1pt off 1pt,domain=-4:4];
\draw[-] [line width=0.3pt] (0,2.85) -- (2.85,2.85)  [dash pattern=on 1pt off 1pt,domain=-4:4];
\draw[-] [line width=0.3pt] (2.85,0) -- (2.85,2.85)  [dash pattern=on 1pt off 1pt,domain=-4:4];
\draw[-] [line width=0.3pt] (0,2) -- (3.7,2)  [dash pattern=on 1pt off 1pt,domain=-4:4];
\draw  (6.8,-0.4) node {$\beta_1$};
\draw  (5.7,-0.4) node {$2$};
\draw  (2.85,-0.4) node {$1$};
\draw  (-0.4,6.7) node {$\beta_2$};
\draw  (-0.3,5.7) node {$2$};
\draw  (-0.3,2.85) node {$1$};
\draw  (-0.73,2) node {$1-\delta$};
\end{tikzpicture}
\caption{The grey region is where we can guarantee unimodular Turing instability.} \label{figure1}
\end{figure}
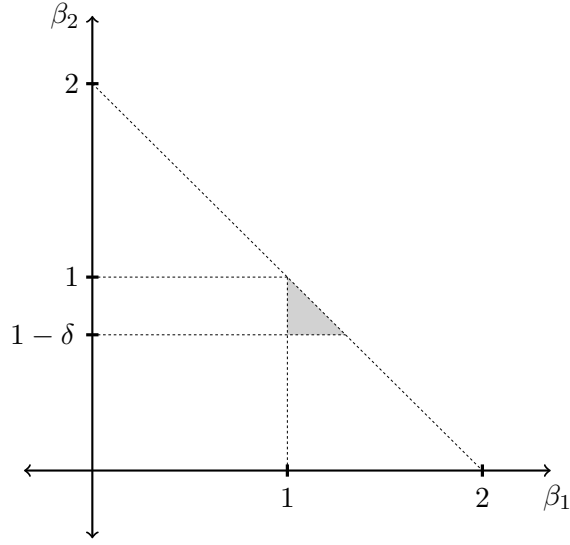

The proof is similar than the one of the previous item.
In this case, we will get the stronger property $f^*\pare{k}<f^*\pare{0}=\tanh\pare{\lambda\beta_1}\tanh\pare{\lambda\beta_2}<f^*\pare{1}$ for every $k\ge 2$. 
For $k>0$, ${f^*}'\pare{k}=0$ if and only if
\begin{align}\nonumber
\frac{\tilde \tau_1 e^{\tilde \tau_2 k^2}}{\alpha_2^{*}} +\frac{\tilde \tau_2 e^{\tilde \tau_1 k^2}}{\alpha_1^{*}}   = \tilde \tau_1+\tilde \tau_2.
\end{align}
As the left-hand side is increasing, this can occur at most in one value of $k>0$.
As $f^*\pare{0}>0$ and $f^*\pare{k}\conv{k\to \infty}-1$, $f^*$ has only one positive root $\hat k$.
Then sufficient conditions for unimodular Turing instability are ${f^*}'\pare{k}>0$ for $0<k\le 1$ and $\hat k<2$.
\begin{figure}[H]
\centering
\begin{tikzpicture}[thick,yscale=0.9, xscale=0.9]
\clip(-4.5,-1) rectangle (10,6.2);
\draw[<->] (-1,0) -- (7.5,0)  {};
\draw[<->] (0,-1) -- (0,6)  {};
\draw[-] [line width=1.2pt] (-0.09,2.107) -- (0.09,2.107)  {};
\draw[-] [line width=1.2pt] (5.4,-0.09) -- (5.4,0.09)  {};
\draw[-] [line width=1.2pt] (2.7,-0.09) -- (2.7,0.09)  {};
\draw[-] [line width=0.3pt] (2.7,0) -- (2.7,4.945)  [dash pattern=on 1pt off 1pt,domain=-4:4];
\draw[-] [line width=1.2pt] (5,-0.09) -- (5,0.09)  {};
\draw  (4.87,-0.4) node {$\hat k$};
\draw  (7.5,-0.4) node {$k$};
\draw  (5.4,-0.4) node {$2$};
\draw  (2.7,-0.4) node {$1$};
\draw  (-0.65,6) node {\small $f^*\pare{k}$};
\draw  (-2.2,2.107) node {$\tanh\pare{\lambda\beta_1}\tanh\pare{\lambda\beta_2}$};
\draw[line width=0.5pt,xscale=5,yscale=6.5,domain=0:1.5,samples=50,variable=\x] plot ({\x},{  
10*(exp(0.5*(1-\x^2))-1)*(1-0.95*exp(-0.5*(\x^2)))
 });
\end{tikzpicture}
\caption{The function $f^*$.} \label{figure7}
\end{figure}
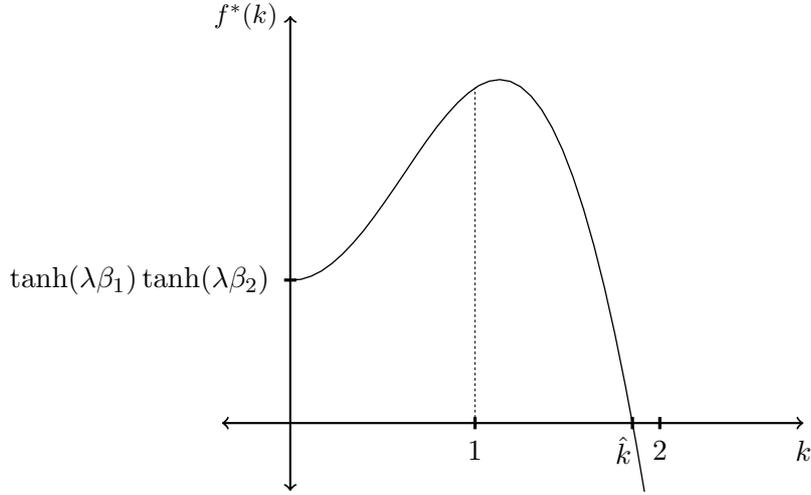
Recall the first one follows from condition \eqref{papa}.
For the second one, observe that, as $1-\alpha_2^* e^{-\tilde \tau_2k^2}$ never vanishes, $f^*\pare{k}=0$ if and only if $\alpha_2^* e^{-\tilde \tau_2k^2}-1=0$ or, equivalently,
\begin{align}\label{igig}
\tilde \tau_1> \frac{\log \alpha_1^*}{4}.
\end{align}
We are done if we see that conditions \eqref{papa} and \eqref{igig} can be simultaneously satisfied.

Choose $\tilde \tau_1=\frac{\log \alpha_1^*}{3}$ so condition \eqref{igig} is automatically fulfilled.
Replacing in the right-hand side of \eqref{papa}, we get
\begin{align}\nonumber
3\frac{1-\pare{\alpha_1^*}^{-2/3}}{\log \alpha_1^*},
\end{align}
that converges to $2$ as $\alpha_1^*\downarrow 1$.
As $\alpha_1^*<\beta_1$, it is enough to take $\beta_2$ close enough to $1$ (and then also $\beta_1$ close to $1$) for the later quantity to be larger than say $2-\frac{1}{2}$.

To analyze the left-hand side of \eqref{papa}, we observe that the derivative with respect to $\tilde \tau_2$ vanishes at the unique positive solution of
\begin{align}\nonumber
\alpha_2^*=e^{\tilde \tau_2}\pare{1-\tilde \tau_2},
\end{align}
solution that we call $\hat{\tilde c}_2$.
As the right-hand side of this expression is $1$ at $\tilde \tau_2=0$, and as it is decreasing, $\hat{\tilde c}_2$ tends to zero as $\alpha_2^*\uparrow 1$.
As $\hat{\tilde c}_2$ satisfies
\begin{align}\nonumber
\pare{\alpha_2^*}^{-1} e^{\hat{\tilde c}_2}-1=\pare{\alpha_2^*}^{-1}\hat{\tilde c}_2e^{\hat{\tilde c}_2},
\end{align}
we can replace in the left-hand side of \eqref{papa} and obtain that the value of the minimum is
$\pare{\alpha_2^*}^{-1}e^{\hat{\tilde c}_2}$, that tends to $1$ as $\alpha_2^*\uparrow 1$ ($\alpha_2^*$ goes to one because, as mentioned before, $\lambda^*$ vanishes).

We can conclude after taking $\tilde \tau_2=\hat{\tilde c}_2$ and $\lambda>\lambda^*$ a perturbation of $\lambda^*$.
\end{itemize}

\section{Appendix}

\begin{lemma}\label{az}
There exists $ C  = C  \pare{\MP}$ such that
\begin{align}
\norm{e^{tA^{\pare{k}}}}\le  C  e^{t\mu}
\end{align}
for $k=\pm 1$, and
\begin{align}
\norm{e^{tA^{\pare{k}}}}\le  C  e^{\frac{1}{2}t\mathfrak{R} \pare{\mu_1^{\pare{k}}}}
\end{align}
for $k\neq \pm 1$.
\end{lemma}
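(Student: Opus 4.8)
The plan is to estimate $\norm{e^{tA^{(k)}}}$ through the explicit Cayley--Hamilton (Lagrange interpolation) formula for the exponential of a $2\times2$ matrix, using two structural facts about the family $\llav{A^{(k)}}_k$. First, it is uniformly bounded: since $\hat\phi_i^{(k)}\in(0,1]$ for every $k$, there is a constant $M$, depending only on the macroscopic parameters, with $\norm{A^{(k)}}\le M$, and hence $\abs{\mu_j^{(k)}}\le M$ for $j\in\llav{1,2}$. Second, because $\hat\phi_i^{(k)}=e^{-2\pi^2\tau_i k^2}\to 0$ as $\abs{k}\to\infty$, the matrices $A^{(k)}$ converge to the fixed matrix $A^{(\infty)}$ with diagonal entries $-1$ and off-diagonal entries $\tanh(\beta_1\lambda)$, $-\tanh(\beta_2\lambda)$, which has $\tr A^{(\infty)}=-2<0$ and $\det A^{(\infty)}=1+\tanh(\beta_1\lambda)\tanh(\beta_2\lambda)>0$, so its eigenvalues have real part $-1$. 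The interpolation formula reads
\begin{align}\nonumber
e^{tA^{(k)}}=e^{t\mu_2^{(k)}}I+\frac{e^{t\mu_1^{(k)}}-e^{t\mu_2^{(k)}}}{\mu_1^{(k)}-\mu_2^{(k)}}\pare{A^{(k)}-\mu_2^{(k)}I}
\end{align}
when $\mu_1^{(k)}\ne\mu_2^{(k)}$, and $e^{tA^{(k)}}=e^{t\mu^{(k)}}\corch{I+t(A^{(k)}-\mu^{(k)}I)}$ when $\mu_1^{(k)}=\mu_2^{(k)}=\mu^{(k)}$. Writing the divided difference as $\frac{e^{t\mu_1^{(k)}}-e^{t\mu_2^{(k)}}}{\mu_1^{(k)}-\mu_2^{(k)}}=t\int_0^1 e^{t(s\mu_1^{(k)}+(1-s)\mu_2^{(k)})}\ds$ (also valid in the confluent case), and using that $\mathfrak{R}(\mu_1^{(k)})\ge\mathfrak{R}(\mu_2^{(k)})$ for every $k$ --- real eigenvalues with $\mu_1^{(k)}\ge\mu_2^{(k)}$ when $\Dis^{(k)}\ge0$, conjugate ones with common real part $\tfrac12\tr^{(k)}$ when $\Dis^{(k)}<0$ --- together with $\norm{A^{(k)}-\mu_2^{(k)}I}\le2M$, I get the uniform estimate
\begin{align}\nonumber
\norm{e^{tA^{(k)}}}\le e^{t\mathfrak{R}(\mu_2^{(k)})}+2Mt\,e^{t\mathfrak{R}(\mu_1^{(k)})}\qquad\text{for all }k\text{ and }t\ge0 .
\end{align}

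For $k=\pm1$, the standing hypothesis \eqref{cond3} gives $\det^{(\pm1)}<0$, hence $\Dis^{(\pm1)}>0$ and real eigenvalues $\mu_2^{(\pm1)}<0<\mu_1^{(\pm1)}=\mu$; in particular the spectral gap $\mu-\mu_2^{(\pm1)}$ is a fixed positive constant. Here I would not use the crude divided-difference bound but rather $\abs{\frac{e^{t\mu}-e^{t\mu_2^{(\pm1)}}}{\mu-\mu_2^{(\pm1)}}}\le\frac{e^{t\mu}}{\mu-\mu_2^{(\pm1)}}$ (using $e^{t\mu_2^{(\pm1)}}\ge0$), which, combined with $e^{t\mu_2^{(\pm1)}}\le1\le e^{t\mu}$ and $\norm{A^{(\pm1)}-\mu_2^{(\pm1)}I}\le2M$, yields $\norm{e^{tA^{(\pm1)}}}\le\pare{1+\tfrac{2M}{\mu-\mu_2^{(\pm1)}}}e^{t\mu}$. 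Since $A^{(-1)}=A^{(1)}$ the same bound holds for $k=-1$, settling the first inequality.

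For $k\ne\pm1$, the standing hypotheses \eqref{cond1}--\eqref{cond2} are exactly $\tr^{(k)}<0$ and $\det^{(k)}>0$, so $\mathfrak{R}(\mu_2^{(k)})\le\mathfrak{R}(\mu_1^{(k)})<0$ and the uniform estimate above gives $\norm{e^{tA^{(k)}}}\le(1+2Mt)e^{t\mathfrak{R}(\mu_1^{(k)})}$. Splitting $e^{t\mathfrak{R}(\mu_1^{(k)})}=e^{\frac12 t\mathfrak{R}(\mu_1^{(k)})}\cdot e^{\frac12 t\mathfrak{R}(\mu_1^{(k)})}$, the factor $(1+2Mt)e^{\frac12 t\mathfrak{R}(\mu_1^{(k)})}$ is bounded by $\sup_{s\ge0}(1+2Ms)e^{\frac12 s\nu^*}$, where $\nu^*\defi\sup_{k\ne\pm1}\mathfrak{R}(\mu_1^{(k)})$; this supremum is finite precisely because $\nu^*<0$, and this is exactly why the factor $\tfrac12$ in the exponent is needed --- it absorbs the linear-in-$t$ prefactor coming from the (possibly) near-confluent eigenvalues. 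To see $\nu^*<0$: by continuity of the eigenvalues in the (real-analytic) entries of $A^{(k)}$ and the convergence $A^{(k)}\to A^{(\infty)}$ above, $\mathfrak{R}(\mu_1^{(k)})\to-1$ as $\abs{k}\to\infty$, so $\mathfrak{R}(\mu_1^{(k)})\le-\tfrac12$ for all $\abs{k}>K_0$ and some $K_0$; the remaining finitely many modes $k$ with $\abs{k}\le K_0$, $k\ne\pm1$ (including $k=0$, where stability is part of the Turing regime) each have $\mathfrak{R}(\mu_1^{(k)})<0$ by hypothesis, so $\nu^*$ is at most the maximum of $-\tfrac12$ and these finitely many negative numbers. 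Taking $C$ the maximum of $1+\tfrac{2M}{\mu-\mu_2^{(\pm1)}}$ and $\sup_{s\ge0}(1+2Ms)e^{\frac12 s\nu^*}$ finishes the proof. The main obstacle is this last uniformity in $k$: without the decay of $\hat\phi_i^{(k)}$ one could not reduce to finitely many modes, and this reduction is also what guarantees that $C$ depends only on the macroscopic parameters.
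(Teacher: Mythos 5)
Your proof is correct, and it takes a genuinely different route from the paper's. The paper diagonalizes: for $k=\pm1$ and for the finitely many small $\abs{k}$ it writes $e^{tA^{(k)}}=S^{(k)}\,\mathrm{diag}(e^{t\mu_1^{(k)}},e^{t\mu_2^{(k)}})\,(S^{(k)})^{-1}$ and controls the condition number $\norm{S^{(k)}}\norm{(S^{(k)})^{-1}}$; for large $\abs{k}$ it first proves a uniform lower bound $\abs{\Dis^{(k)}}\ge 2\tanh(\lambda\beta_1)\tanh(\lambda\beta_2)$, which keeps $\abs{\det S^{(k)}}$ bounded below; and it must treat the confluent case $\Dis^{(k)}=0$ separately through a Schur triangular form, absorbing the resulting factor $t$ into $e^{\frac12 t\mu^{(k)}}$. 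Your Lagrange--Sylvester interpolation formula with the integral representation of the divided difference sidesteps all of this: it never introduces eigenvector matrices (so there is no condition number that degenerates as the eigenvalues coalesce), it covers the confluent and near-confluent cases in one stroke, and it replaces the paper's discriminant lower bound by the observation that $A^{(k)}\to A^{(\infty)}$ with $\mathfrak{R}(\mu_1^{(k)})\to-1$, which is also how the paper itself argues in Lemma~\ref{PPP}. Both proofs ultimately reduce to finitely many exceptional modes and both spend the factor $\tfrac12$ in the exponent to absorb a prefactor growing linearly in $t$; what your version buys is uniformity of treatment and a constant with a transparent dependence on the macroscopic parameters, at the price of invoking the (elementary but less standard) functional-calculus identity for $2\times2$ matrices. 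One small point worth making explicit if you write this up: the bound $e^{t\mathfrak{R}(\mu_2^{(k)})}\le e^{t\mathfrak{R}(\mu_1^{(k)})}\le e^{\frac12 t\mathfrak{R}(\mu_1^{(k)})}$ used to fold the first summand into the common prefactor relies on $\mathfrak{R}(\mu_1^{(k)})<0$, which is exactly the standing hypothesis for $k\ne\pm1$, so it is available.
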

\begin{proof}[Proof of Lemma \ref{az}]
We first analyze the case $k=1$
(the case $k=-1$ is the same as $A^{\pare{1}}=A^{\pare{-1}}$).
Let $S^{\pare{1}}$ be the matrix with columns $\ul v_1^{\pare{1}}$ and $\ul v_2^{\pare{1}}$.
We have
\begin{align}
\norm{e^{tA^{\pare{k}}}}\le \norm{S^{\pare{1}}} \norm{\pare{
\begin{array}{cc}
e^{t\mu_1^{\pare{k}}} &0
\\[5pt]
0 & e^{t\mu_2^{\pare{k}}}
\end{array}
}} \norm{\pare{S^{\pare{1}}}^{-1}}=
\norm{S^{\pare{1}}}\norm{\pare{S^{\pare{1}}}^{-1}} e^{t\mu}.
\end{align}

The case $k\neq \pm 1$ will be decomposed in two sub-cases.
There exists $ C  _1= C  _1\pare{\MP}$ such that $\abs{\Dis^{\pare{k}}}\ge 2\tanh\pare{\lambda\beta_1}\tanh\pare{\lambda\beta_2}$ for every $k$ such that $\abs{k}\ge  C  _1$.

We first analyze the sub-case $\abs{k}\ge C_1$.
In this case, the matrix is diagonalizable;
the difference with the case $k=\pm 1$ is that we have to control coefficients in the variable $k$.
Let $S^{\pare{k}}$ be the matrix with columns $\ul v_1^{\pare{k}}$ and $\ul v_2^{\pare{k}}$.
We have
\begin{align}
\norm{e^{tA^{\pare{k}}}}\le
\norm{S^{\pare{k}}}
\norm{\pare{
\begin{array}{cc}
e^{t\mu_1^{\pare{k}}} & 0 \\[5pt]
0 & e^{t\mu_2^{\pare{k}}}
\end{array}
}}
\norm{\pare{S^{\pare{k}}}^{-1}}=
\norm{S^{\pare{k}}}
e^{t\mathfrak{R}\pare{\mu_1^{\pare{k}}}}
\norm{\pare{S^{\pare{k}}}^{-1}}.
\end{align}
There exists $C_2=C_2\pare{\MP}$ such that
$\max\llav{\norm{\ul v^{\pare{k}}_1}_1,\norm{\ul v^{\pare{k}}_2}_1}\le C_2$, so $\norm{S^{\pare{k}}}\le C_2$.
As $\pare{S^{\pare{k}}}^{-1}=\frac{1}{\det S^{\pare{k}}}\tilde S^{\pare{k}}$ with $\tilde S^{\pare{k}}$ obtained from $S^{\pare{k}}$ after rearranging the coefficients and changing the signs of some of them,
there exists $ C  _3= C  _3\pare{\text U}$ such that 
\begin{align}
\norm{\pare{S^{\pare{k}}}^{-1}}\le
 C  _3\abs{\det S^{\pare{k}}}^{-1}=
 C  _3\frac{1}{\tanh\pare{\lambda\beta_2}\abs{\sqrt{\Dis^{\pare{k}}}}}\le
  C  _3\frac{1}{\tanh\pare{\lambda\beta_2} \sqrt{2\tanh\pare{\lambda\beta_1}\tanh\pare{\lambda\beta_2}} }.
\end{align}

We finally analyze the sub-case $k\neq \pm 1$ and $\abs{k}<C_1$.
If $\Dis^{\pare{k}}\neq 0$, we can proceed as in the case $k=1$ as we have to control only a finite number of $k$'s.
If $\Dis^{\pare{k}}=0$, we have $\mu_1^{\pare{k}}=\mu_2^{\pare{k}}=\mu^{\pare{k}}$.
The matrix $A^{\pare{k}}$ is not diagonalizable but equivalent to a triangular matrix
\begin{align}
T^{\pare{k}}=\pare{
\begin{array}{cc}
\mu^{\pare{k}} & a^{\pare{k}}
\\[5pt]
0 & \mu^{\pare{k}}
\end{array}
}
\end{align}
via conjugating by orthogonal matrices.
Then there exists $ C  _4= C  _4\pare{\text U}$ such that
\begin{align}
\norm{e^{tA^{\pare{k}}}}\le  C  _4\norm{e^{tT^{\pare{k}}}}= C  _4\norm{
\pare{\begin{array}{cc}
e^{t\mu^{\pare{k}}} & a^{\pare{k}}te^{t\mu^{\pare{k}}}
\\[5pt]
0 & e^{t\mu^{\pare{k}}}
\end{array}}
}=
 C  _4\pare{e^{t\mu^{\pare{k}}}+\abs{a^{\pare{k}}}te^{t\mu^{\pare{k}}}}.
\end{align}
We conclude by observing that there exists $ C  _5= C  _5\pare{\MP}$ such that $e^{t\mu^{\pare{k}}}+\abs{a^{\pare{k}}}te^{t\mu^{\pare{k}}}\le  C  _5 e^{\frac{1}{2}t\mu^{\pare{k}}}$ (as we have only finite cases to consider, $\abs{a^{\pare{k}}}$ can be bounded by a constant that depends only on the MP).
\end{proof}

\begin{proposition}\label{himan}
For $M\in\BB N$ and $j\in\llav{1,\ldots,M}$, let $I_j\defi \left[\frac{j-1}{M},\frac{j}{M}\right)$.
For $f\in L^1\pare{\BB T,\BB R}$ and $M\in\BB N$, let $f_M$ be the function such that, for every $j\in\llav{1,\ldots,M}$, takes the value $M\int_{I_j}f$ in the interval $I_j$ ($f_M$ is a piece-wise constant approximation of $f$).
Let $f\in C\pare{\BB T,\BB R}$ such that $\int_0^1f_M^2\conv{M\to\infty}\int_0^1f^2$.
Let $\pare{\sigma_i}_{i=0}^{N-1}$ be an independent family with distribution $P\corch{\sigma_i=1}=P\corch{\sigma_i=-1}=\frac{1}{2}$.
Then
\begin{align}\label{gibbs}
Y^{\pare{N}}\defi\frac{1}{\sqrt N}\sum_{i=0}^{N-1}f\pare{\frac{i}{N}}\sigma_i
\end{align}
converges in distribution to $N\pare{0,\int_0^1 f^2}$.
\end{proposition}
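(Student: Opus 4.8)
The plan is to view $Y^{\pare N}$ as the normalized sum of a triangular array of independent, bounded, centered random variables and to apply the Lindeberg--Feller central limit theorem. Set $X_{N,i}\defi N^{-1/2}f\pare{i/N}\sigma_i$ for $i\in\llav{0,\dots,N-1}$, so that $Y^{\pare N}=\sum_{i=0}^{N-1}X_{N,i}$. For each fixed $N$ the variables $X_{N,i}$ are independent and centered, with $\Var\pare{X_{N,i}}=N^{-1}f\pare{i/N}^2$; hence $s_N^2\defi\sum_{i=0}^{N-1}\Var\pare{X_{N,i}}=N^{-1}\sum_{i=0}^{N-1}f\pare{i/N}^2$.

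The first step is to check that $s_N^2\conv{N\to\infty}\int_0^1 f^2$. Since $f$ --- and therefore $f^2$ --- is continuous on the compact torus $\BB T$, the sum defining $s_N^2$ is a Riemann sum and the convergence is standard; equivalently, it is the content of the hypothesis $\int_0^1 f_M^2\conv{M\to\infty}\int_0^1 f^2$ once one notes that $f_M\to f$ uniformly (being continuous on a compact set) and $\norm{f_M}_\infty\le\norm{f}_\infty$. In particular the sequence $\pare{s_N^2}_N$ is bounded.

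The second step is the Lindeberg condition. Because $\abs{f}\le\norm{f}_\infty$ we have $\abs{X_{N,i}}\le\norm{f}_\infty N^{-1/2}$ uniformly in $i$, so for every $\varepsilon>0$ and all $N>\norm{f}_\infty^2/\varepsilon^2$ the event $\llav{\abs{X_{N,i}}>\varepsilon}$ is empty, and the truncated second-moment sum $\sum_{i=0}^{N-1}\BB E\corch{X_{N,i}^2\,\mathbf 1\llav{\abs{X_{N,i}}>\varepsilon}}$ vanishes identically; after dividing by $s_N^2$ it still tends to $0$. (Alternatively one checks Lyapunov's condition via $\sum_i\BB E\abs{X_{N,i}}^4=N^{-2}\sum_i f\pare{i/N}^4=O\pare{N^{-1}}$.) The Lindeberg--Feller theorem then gives that $s_N^{-1}Y^{\pare N}$ converges in distribution to the Gaussian law $N\pare{0,1}$, and combining this with $s_N^2\to\int_0^1 f^2$ yields that $Y^{\pare N}$ converges in distribution to $N\pare{0,\int_0^1 f^2}$; in the degenerate case $\int_0^1 f^2=0$ the same conclusion is read off directly from $\BB E\corch{\pare{Y^{\pare N}}^2}=s_N^2\to0$.

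There is essentially no serious obstacle here; if one prefers a fully self-contained argument avoiding the appeal to Lindeberg--Feller, the characteristic function can be computed by hand: $\BB E\corch{e^{\mathtt{i} tY^{\pare N}}}=\prod_{i=0}^{N-1}\cos\pare{tf\pare{i/N}N^{-1/2}}$, and taking logarithms with the uniform expansion $\log\cos x=-x^2/2+O\pare{x^4}$ valid for $\abs{x}\le\norm{f}_\infty\abs{t}N^{-1/2}$ gives $\log\BB E\corch{e^{\mathtt{i} tY^{\pare N}}}=-\tfrac{t^2}{2}s_N^2+O\pare{N^{-1}}\conv{N\to\infty}-\tfrac{t^2}{2}\int_0^1 f^2$, so L\'evy's continuity theorem finishes the proof. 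The only point that requires a little care is the convergence $s_N^2\to\int_0^1 f^2$, which is exactly why the continuity of $f$ (equivalently the hypothesis on $f_M$) is imposed.
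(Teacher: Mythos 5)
Your proof is correct, but it takes a genuinely different route from the paper. The paper argues by a two-step approximation: it first proves the claim for piecewise-constant $f=f_M$ (Lemma \ref{565}) by grouping the spins into $M$ blocks $\Lambda_j^{\pare{N}}$, applying the one-dimensional CLT to each block, passing to the joint Gaussian limit of the vector $\pare{X_j^{\pare{N}}}_j$, and then taking the linear combination; afterwards it approximates a general continuous $f$ by $f_M$ and controls the error $Y^{\pare{N}}-Y^{\pare{N}}_M$ via Chebyshev, uniformly in $N$. You instead treat $\pare{N^{-1/2}f\pare{i/N}\sigma_i}_i$ directly as a triangular array and invoke Lindeberg--Feller (or, equivalently, compute the characteristic function $\prod_i\cos\pare{tf\pare{i/N}N^{-1/2}}$ by hand). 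Your argument is shorter and makes the discretization hypothesis $\int_0^1 f_M^2\to\int_0^1 f^2$ superfluous, since continuity of $f$ already gives the Riemann-sum convergence $s_N^2\to\int_0^1 f^2$ and the uniform bound $\abs{X_{N,i}}\le\norm{f}_\infty N^{-1/2}$ kills the Lindeberg condition outright; you also correctly dispose of the degenerate case $\int_0^1 f^2=0$. What the paper's longer route buys is self-containedness at the level of the classical i.i.d.\ CLT (no appeal to the triangular-array version) and a block structure that mirrors the discrete-Fourier estimates used elsewhere in the text; what yours buys is brevity and a cleaner identification of exactly which hypotheses are doing work.
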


To prove lemma \ref{gibbs}, we need the following one.

\begin{lemma}\label{565}
If $f\in L^1\pare{\BB T,\BB R}$ is such that $f=f_M$ for some $M\in\BB N$, the assertion of lemma \ref{gibbs} holds.
\end{lemma}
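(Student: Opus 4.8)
The plan is to deduce the conclusion of Proposition~\ref{himan} — that $Y^{\pare N}$ converges in distribution to $N\pare{0,\int_0^1 f^2}$ — directly from a classical central limit theorem for triangular arrays, using crucially that $f=f_M$ takes only finitely many values. Write $c_j\defi M\int_{I_j}f$ for the constant value of $f$ on $I_j$, $j\in\llav{1,\dots,M}$; then $f$ is bounded, with $\abs f\le \norm f_\infty\defi \max_{1\le j\le M}\abs{c_j}$, and $\int_0^1 f^2=\frac1M\sum_{j=1}^M c_j^2$. The summands $\xi_i^{\pare N}\defi \frac1{\sqrt N}f\pare{\frac iN}\sigma_i$, $i=0,\dots,N-1$, are independent, centered, and uniformly small: $\abs{\xi_i^{\pare N}}\le \norm f_\infty/\sqrt N$, and $Y^{\pare N}=\sum_{i=0}^{N-1}\xi_i^{\pare N}$.

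First I would compute the limiting variance. Letting $N_j\defi\#\llav{0\le i\le N-1:\ i/N\in I_j}$, the grid points landing in $I_j=\corch{\tfrac{j-1}{M},\tfrac jM}$ are exactly those $i$ with $(j-1)N/M\le i<jN/M$, so $N_j/N\xrightarrow[N\to\infty]{}1/M$ by an elementary counting estimate (the endpoints contribute $O(1/N)$); hence
\begin{align}\nonumber
\sum_{i=0}^{N-1}\Var\pare{\xi_i^{\pare N}}=\frac1N\sum_{i=0}^{N-1}f\pare{\tfrac iN}^2=\sum_{j=1}^M c_j^2\,\frac{N_j}{N}\xrightarrow[N\to\infty]{}\frac1M\sum_{j=1}^M c_j^2=\int_0^1 f^2.
\end{align}
Next I would verify the Lindeberg condition: for any $\varepsilon>0$, once $N$ is large enough that $\norm f_\infty/\sqrt N<\varepsilon$ one has $\abs{\xi_i^{\pare N}}<\varepsilon$ for every $i$, so $\sum_{i=0}^{N-1}\BB E\corch{\pare{\xi_i^{\pare N}}^2\11\llav{\abs{\xi_i^{\pare N}}>\varepsilon}}=0$ for such $N$. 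The Lindeberg--Feller theorem then yields that $Y^{\pare N}$ converges in distribution to $N\pare{0,\int_0^1f^2}$, which is the claim.

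Should a self-contained argument be preferred, the same two ingredients (uniform smallness and the variance limit above) give the result via characteristic functions: $\BB E\corch{e^{\mathtt it Y^{\pare N}}}=\prod_{i=0}^{N-1}\cos\pare{t f(i/N)/\sqrt N}$, and from $\log\cos x=-\tfrac{x^2}2+O\pare{x^4}$, valid uniformly for $\abs x\le \abs t\,\norm f_\infty/\sqrt N$, the logarithm of the left-hand side equals $-\tfrac{t^2}{2N}\sum_{i=0}^{N-1} f(i/N)^2+O\pare{N^{-1}}\to -\tfrac{t^2}2\int_0^1 f^2$, the log-characteristic function of the Gaussian limit.

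There is essentially no obstacle here: this lemma is the elementary ``piecewise-constant building block'' on which Proposition~\ref{himan} will be bootstrapped by an approximation argument, and the only non-formal point is the counting estimate $N_j/N\to1/M$ (equivalently, the convergence of the Riemann sum $\frac1N\sum_i f(i/N)^2$ to $\int_0^1 f^2$), which is immediate once one checks that the finitely many grid points near the endpoints $j/M$ contribute $O(1/N)$ and hence do not affect the limit.
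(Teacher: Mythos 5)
Your proof is correct, but it follows a genuinely different route from the paper's. The paper proves the lemma by a block decomposition: it groups the spins according to the intervals $I_j$ on which $f_M$ is constant, forms the auxiliary sum $\tilde Y^{\pare{N}}=\frac{1}{\sqrt M}\sum_{j=1}^{M}f\pare{\tfrac{j-1}{M}}\abs{\Lambda_j^{\pare{N}}}^{-1/2}\sum_{i\in\Lambda_j^{\pare{N}}}\sigma_i$, applies the classical CLT to each independent block to get convergence of the $M$-dimensional vector of block averages to a standard Gaussian vector, and then shows $\Var\pare{\tilde Y^{\pare{N}}-Y^{\pare{N}}}\to 0$ and transfers the limit back to $Y^{\pare{N}}$ via bounded uniformly continuous test functions. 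You instead treat $Y^{\pare{N}}$ directly as a triangular array and invoke the Lindeberg--Feller theorem: the Lindeberg condition is vacuous for large $N$ because every summand is bounded by $\norm{f}_\infty/\sqrt N$, and the only substantive check is the Riemann-sum convergence $\frac{1}{N}\sum_i f\pare{i/N}^2\to\int_0^1 f^2$, which you correctly reduce to the counting estimate $N_j/N\to 1/M$. Both arguments are complete; yours is shorter and avoids the two-stage approximation entirely. It is worth noting that your argument does not actually use that $f$ is piecewise constant beyond boundedness and the convergence of the Riemann sums of $f^2$, so it would establish Proposition \ref{himan} directly for any bounded Riemann-integrable $f$ (in particular any continuous $f$), making the paper's lemma-plus-approximation structure unnecessary; the paper's block approach, by contrast, only needs the bivalued classical CLT but pays for it with the extra variance-comparison step. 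Your characteristic-function variant is an equally valid self-contained alternative.
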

\begin{proof}[Proof of lemma \ref{565}]
Let $\Lambda_j^{\pare{N}}\defi\pare{NI_j}\cap \BB Z$.
We first see that
\begin{align}\label{evans}
\tilde Y^{\pare{N}}\defi \frac{1}{\sqrt{M}}\sum_{j=1}^M f\pare{\frac{j-1}{M}}\frac{1}{\sqrt{\abs{\Lambda_j^{\pare{N}}}}}
\sum_{i\in\Lambda_j^{\pare{N}}}\sigma_i
\end{align}
converges weakly to $N\pare{0,\int_0^1f^2}$.
Call $X^{\pare{N}}_j\defi \frac{1}{\sqrt{\abs{\Lambda_j^{\pare{N}}}}}
\sum_{i\in\Lambda_j^{\pare{N}}}\sigma_i$.
For every $N$, the family $\llav{X_j^{\pare{N}}}_{j}$ is independent.
Also $X_j^{\pare{N}}$ converges weakly to $N\pare{0,1}$ for every $j$.
Then the random vector $\pare{X_j^{\pare{N}}}_{j}$ converges weakly to the random vector $\pare{X_j}_j\sim N\pare{\ubar 0,\mbox{Id}_M}$.
Then the random variable \eqref{evans} converges weakly to $\frac{1}{\sqrt{M}}\sum_{j=1}^M f\pare{\frac{j-1}{M}}X_j\sim N\pare{0,\int_0^1f^2}$.

Using that $f$ is bounded, that $V\pare{X_j^{\pare{N}}}=1$, and that
$\sqrt{\frac{\abs{\Lambda_j^{\pare{N}}}}{N}}\conv{N\to\infty}\frac{1}{\sqrt M}$,
one can see that $V\pare{\tilde Y^{\pare{N}}-Y^{\pare{N}}}\conv{N\to\infty}0$;
then, for every $\tilde\delta>0$,
\begin{align}\label{amstrong}
P\pare{\abs{\tilde Y^{\pare{N}}-Y^{\pare{N}}}>\tilde\delta}\conv{N\to\infty}0.
\end{align}
Let $G_{\int_0^1f^2}$ be the Gaussian probability with zero mean and variance  $\int_0^1f^2$.
For $h:\BB R\to\BB R$ bounded and uniformly continuous, we have to prove that
\begin{align}
\abs{E\pare{h\pare{Y^{\pare{N}}}}-G_{\int_0^1f^2}\pare{h}}\conv{N\to\infty}0.
\end{align}
(Recall that weak convergence of probabilities is equivalent to convergence of the expectations against bounded uniformly continuous functions.)
We have already proved that 
\begin{align}
\abs{E\pare{h\pare{\tilde Y^{\pare{N}}}}-G_{\int_0^1f^2}\pare{h}}\conv{N\to\infty}0,
\end{align}
so we only need to prove
\begin{align}\label{nichols}
\abs{E\pare{h\pare{Y^{\pare{N}}}}-E\pare{h\pare{\tilde Y^{\pare{N}}}}}\conv{N\to\infty}0.
\end{align}
Fix $\varepsilon>0$ and take $\delta>0$ such that $\abs{h\pare{y}-h\pare{x}}<\varepsilon$ whenever $\abs{y-x}\le \delta$.
The quantity to control in \eqref{nichols} is bounded by
\begin{align}
\begin{aligned}
&E\pare{\abs{h\pare{Y^{\pare{N}}}-h\pare{\tilde Y^{\pare{N}}}}\11\llav{\abs{Y^{\pare{N}}-\tilde Y^{\pare{N}}}>\delta}}
\\[5pt]
&+E\pare{\abs{h\pare{Y^{\pare{N}}}-h\pare{\tilde Y^{\pare{N}}}}\11\llav{\abs{Y^{\pare{N}}-\tilde Y^{\pare{N}}}\le\delta}}.
\end{aligned}
\end{align}
The first addend goes to zero because of \eqref{amstrong}; the second one is bounded by $\varepsilon$.
As $\varepsilon$ is arbitrary, we can conclude.
\end{proof}

\begin{proof}[Proof of lemma \ref{gibbs}]
Let $f_M$ be the discretized version of $f$ and
\begin{align}
Y^{\pare{N}}_M\defi\frac{1}{\sqrt N}\sum_{i=0}^{N-1}f_M\pare{\frac{i}{N}}\sigma_i.
\end{align}
From Chebyshev inequality, there is a constant depending only on $f$ such that, for every $\tilde \delta>0$,
\begin{align}
P\pare{\abs{Y^{\pare{N}}-Y_M^{\pare{N}}}>\tilde\delta}\le \frac{C}{\tilde\delta^2M^2}
\end{align}
(observe that this bound is uniform in $N$).
Let $h:\BB R\to\BB R$ bounded and uniformly continuous.
We have to prove that
\begin{align}\label{casey}
\abs{E\pare{h\pare{Y^{\pare{N}}}}-G_{\int_0^1 f^2}h}\conv{N\to\infty}0.
\end{align}
Fix $\varepsilon>0$ and let $\delta>0$ be such that $\abs{h\pare{y}-h\pare{x}}<\varepsilon$ whenever $\abs{y-x}\le\delta$.
Take $M$ such that $\abs{G_{\int_0^1 f_M^2}h-G_{\int_0^1 f^2}h}<\varepsilon$ and 
$\frac{C}{\delta^2 M^2}< \frac{\varepsilon}{\norm{h}_\infty}$.
The quantity to control in \eqref{casey} is bounded by
\begin{align}
E\pare{\abs{h\pare{Y^{\pare{N}}}-h\pare{Y_M^{\pare{N}}}}}
+\abs{E\pare{h\pare{Y_M^{\pare{N}}}}- G_{\int_0^1f_M^2}h }+\varepsilon.
\end{align}
Multiply by $1=\11\llav{\abs{Y^{\pare{N}}-Y_M^{\pare{N}}}>\delta}+\11\llav{\abs{Y^{\pare{N}}-Y_M^{\pare{N}}}\le\delta}$ inside the first expectation to get the upper bound $2\varepsilon$ for it.
The second addend goes to zero as $N$ goes to infinity because of Lemma \ref{565}.
We conclude as $\varepsilon$ is arbitrary.
\end{proof}

\begin{lemma}\label{PPP}
There exists a constant $C$ such that, for every $\delta>0$,
\begin{align}\label{re}
\BB P\pare{\sup_{t\in [0,\infty)}\norm{\int_0^t e^{\pare{t-s} A^{\pare{k}}}\ul M_\gamma^{\pare{k}}\pare{ds}}\leq k^2 \gamma^{\frac{1}{2}-\delta}\quad\forall k\neq\pm 1}\geq 1-C\gamma^{2\delta}
\end{align}
\begin{align}\label{4}
\BB P\pare{\sup_{t\in [0,\infty)}\norm{\int_0^t e^{-sA^{\pare{k}}}\ul M_\gamma^{\pare{k}}\pare{ds}}\leq \gamma^{\frac{1}{2}-\delta}, \;k=\pm 1}\geq1-  C\gamma^{2\delta}
\end{align}
\begin{align}\label{rew}
\BB P\pare{\norm{\ul{X}_\gamma^{\pare{k}}\pare{0}}\leq k^2 \gamma^{\frac{1}{2}-\delta}\quad\forall k\in\BB Z}\geq 1-C\gamma^{2\delta}.
\end{align}
\end{lemma}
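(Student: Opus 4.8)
\subsection*{Proof proposal for Lemma \ref{PPP}}

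The three estimates rest on one fact about the martingale part $\ul M_\gamma^{\pare{k}}$ of the Fourier modes. When the spin at a site $x\in\Lambda_\gamma$ in line $i$ flips, $X_{\gamma,i}^{\pare{k}}$ changes by $\mp 2\gamma F^{\pare{k}}\pare{\gamma x}$, a quantity of modulus $2\gamma$; since $\Lambda_\gamma$ has $\gamma^{-1}$ sites and the rates $R_i\pare{x,\ul\sigma_\gamma}$ are bounded above by a constant $\bar R$ depending only on the macroscopic parameters, the compensators of the real martingales $\mathfrak R\pare{M_{\gamma,i}^{\pare{k}}}$ and $\mathfrak I\pare{M_{\gamma,i}^{\pare{k}}}$ grow at rate at most $4\bar R\gamma$, uniformly in $k$, in $\gamma$, and along the trajectory. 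The plan is to couple this with Doob's $L^2$ maximal inequality and a Chebyshev/union-bound argument over $k$, the weight $k^2$ turning the series $\sum_{\abs k\ge 1}k^{-4}$ into a convergent one.

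I would dispose of \eqref{rew} first. Coordinate-wise $\ul X_\gamma^{\pare{k}}\pare{0}$ is a sum of $\gamma^{-1}$ independent centered variables $\gamma\,\sigma_{\gamma,i}\pare{0,x}F^{\pare{k}}\pare{\gamma x}$ of modulus $\gamma$, so $\BB E\norm{\ul X_\gamma^{\pare{k}}\pare{0}}^2\le 2\gamma$ for every $k$; Chebyshev then gives $\BB P\pare{\norm{\ul X_\gamma^{\pare{k}}\pare{0}}>k^2\gamma^{\frac12-\delta}}\le 2\gamma^{2\delta}k^{-4}$ for $k\neq 0$, and summing over such $k$ (the threshold being $\gamma^{\frac12-\delta}$ for $k=\pm 1$, where $k^2=1$, the mode $k=0$ being immaterial downstream) yields \eqref{rew}.

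For \eqref{re} and \eqref{4} I would write $\int_0^t e^{\pare{t-s}A^{\pare{k}}}\ul M_\gamma^{\pare{k}}\pare{ds}=e^{tA^{\pare{k}}}\int_0^t e^{-sA^{\pare{k}}}\ul M_\gamma^{\pare{k}}\pare{ds}$ and call this $Z_\gamma^{\pare{k}}\pare{t}$. For $k\neq\pm 1$, Lemma \ref{az} gives a $k$-independent $C$ with $\norm{e^{tA^{\pare{k}}}}\le C e^{\frac12 t\,\mathfrak R\pare{\mu_1^{\pare{k}}}}$; each $\mathfrak R\pare{\mu_1^{\pare{k}}}$ is negative by the standing linear-stability hypothesis and they converge to $-1$ as $\abs k\to\infty$, so $\sup_{\abs k\ge 2}\mathfrak R\pare{\mu_1^{\pare{k}}}<0$. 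Fixing a block length $L$ with $C\,e^{\frac12 L\sup_{\abs k\ge 2}\mathfrak R\pare{\mu_1^{\pare{k}}}}<\frac12$, on $\corch{nL,\pare{n+1}L}$ one has $\sup_{nL\le t\le\pare{n+1}L}\norm{Z_\gamma^{\pare{k}}\pare{t}}\le C\norm{Z_\gamma^{\pare{k}}\pare{nL}}+\xi_n^{\pare{k}}$, where $\xi_n^{\pare{k}}\defi\sup_{nL\le t\le\pare{n+1}L}\norm{\int_{nL}^t e^{\pare{t-s}A^{\pare{k}}}\ul M_\gamma^{\pare{k}}\pare{ds}}$, and Doob's inequality on that block together with the compensator bound gives $\BB E\,\pare{\xi_n^{\pare{k}}}^2\le C_1\gamma$ with $C_1$ independent of $k$ and $n$; iterating the recursion and summing the resulting geometric series yields $\BB E\sup_{t\ge 0}\norm{Z_\gamma^{\pare{k}}\pare{t}}^2\le C_2\gamma$ uniformly in $k$, and Chebyshev plus the union bound over $\abs k\ge 1$ give \eqref{re}. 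For $k=\pm 1$, the object to control is the integral against the decaying scalar weight $e^{-s\mu}$ that appears after projecting onto the unstable eigendirection (on which $e^{-sA^{\pare{\pm 1}}}$ acts as multiplication by $e^{-s\mu}$); this is a martingale whose compensator increases to a finite limit $\le C\bar R\gamma/\mu$, so Doob's $L^2$ maximal inequality on $[0,\infty)$ and Chebyshev give \eqref{4}, the reduction to the two coordinates and to their real and imaginary parts proceeding as in part \textit{(b)} of the proof of Theorem \ref{1}.

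The step I expect to be the main obstacle is the infinite-horizon control for \eqref{re}: the martingale $\int_0^t e^{-sA^{\pare{k}}}\ul M_\gamma^{\pare{k}}\pare{ds}$ has a compensator that grows in $t$ (because $\norm{e^{-sA^{\pare{k}}}}$ does), so Doob applied to it directly is useless; one must instead exploit the exponential stability of $Z_\gamma^{\pare{k}}$, and --- the genuinely delicate point --- keep every constant produced along the way a function of $\llav{\beta_1,\beta_2,\tau_1,\tau_2,\lambda}$ only and uniform in $k$, which is precisely the role of the $\MP$-uniformity built into Lemma \ref{az} and of the uniform spectral gap noted above.
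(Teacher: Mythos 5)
Your treatment of \eqref{rew} is correct and coincides with the paper's (Chebyshev on the second moment $\BB E\norm{\ul X_\gamma^{\pare{k}}\pare{0}}^2\le 2\gamma$ plus a union bound weighted by $k^{-4}$), and the two ingredients you isolate --- the pathwise compensator bound $\pic{\# M_{\gamma,i}^{\pare{k}}}\pare{t}\le C\gamma t$ uniform in $k$, and the uniform negativity of $\mathfrak R\pare{\mu_1^{\pare{k}}}$ for $k\neq\pm 1$ --- are exactly the ones the paper uses. For \eqref{re}, however, your route diverges from the paper's, and the step you yourself flag as the main obstacle is not actually overcome. The block recursion yields, for each $n$, $\BB E\corch{\sup_{nL\le t\le\pare{n+1}L}\norm{Z_\gamma^{\pare{k}}\pare{t}}^2}\le C_2\gamma$, i.e.\ a bound on $\sup_n\BB E\corch{\,\cdot\,}$; it does not yield $\BB E\corch{\sup_{t\ge 0}\norm{Z_\gamma^{\pare{k}}\pare{t}}^2}\le C_2\gamma$, which would require interchanging the supremum over infinitely many blocks with the expectation. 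A union bound over the blocks costs $\sum_n C_2\gamma=\infty$, and since the quadratic variation of $\ul M_\gamma^{\pare{k}}$ grows linearly in $t$, each block contributes essentially fresh fluctuations of size $\gamma^{\frac12}$, so the claimed second-moment bound over the infinite horizon cannot be extracted from the geometric series alone. The paper instead applies Doob's $L^2$ maximal inequality and It\^o's isometry directly to the convolution on $\corch{0,T}$, bounds the kernel by $\abs{B^{\pare{k}}_{i,j}\pare{t-s}}\le C e^{\frac12\mathfrak R\pare{\mu_1^{\pare{k}}}\pare{t-s}}$ via Lemma \ref{az}, and uses that $\int_0^T e^{\mathfrak R\pare{\mu_1^{\pare{k}}}u}du$ is bounded uniformly in $T$ and in $k$; this produces the $T$-uniform estimate $C\varepsilon^{-2}\gamma$ in one stroke, which is then summed over $k$ with $\varepsilon=k^2\gamma^{\frac12-\delta}$.

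There is a second gap in your argument for \eqref{4}: you control only the projection of $\int_0^te^{-sA^{\pare{\pm1}}}\ul M_\gamma^{\pare{\pm1}}\pare{ds}$ onto the unstable eigendirection, where the weight is the decaying scalar $e^{-s\mu}$ and the weighted compensator indeed converges, so Doob on $\left[0,\infty\right)$ applies. But \eqref{4} bounds the norm of the full $\BB C^2$-valued integral, and on the complementary eigendirection $e^{-sA^{\pare{\pm1}}}$ acts as multiplication by $e^{-s\mu_2}$ with $\mu_2<0$: this weight grows exponentially, the corresponding weighted compensator is of order $\gamma e^{2\abs{\mu_2}t}$ and diverges as $t\to\infty$, so the ``finite limiting compensator'' argument is unavailable for that component. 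As written, your proof establishes only part of the claimed estimate; completing it requires either a separate treatment of the stable-direction component or a restriction of the time horizon to the intervals actually used downstream (as in \eqref{insieme}).
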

\begin{proof}
We start by proving \eqref{re}. Observe that the proof follows once we show that there exists a constant $C$ such that, for all $T>0$,
\begin{align}\label{poiyl}
\BB P\pare{\exists k\neq \pm 1:\sup_{t\in [0,T]}\norm{\int_0^t e^{\pare{t-s} A^{\pare{k}}}\ul M_\gamma^{\pare{k}}\pare{ds}}> k^2 \gamma^{\frac{1}{2}-\delta}}\leq C\gamma^{2\delta},
\end{align}
which follows from
\begin{align}\label{mkj}
\begin{aligned}
\sum_{k\neq\pm1}P\pare{\sup_{t\in [0,T]}\norm{\int_0^t e^{\pare{t-s} A^{\pare{k}}}\ul M_\gamma^{\pare{k}}\pare{ds}}> k^2 \gamma^{\frac{1}{2}-\delta}}\leq C\gamma^{2\delta}.
\end{aligned}
\end{align}
Observe that by Doob's inequality and Ito's isometry we get the following 
\begin{align}\label{pol}
\begin{aligned}
\BB P\pare{\sup_{t\in[0,T]}\abs{\int_0^tB^{\pare{k}}_{i,j}\pare{t-s}\# M_{\gamma,l}^{\pare{k}}\pare{ds}}>\varepsilon}&\le  \varepsilon^{-2}\BB E\corch{\pare{\int_0^tB^{\pare{k}}_{i,j}\pare{t-s}\# M_{\gamma,l}^{\pare{k}}\pare{ds}}^2}\\[5pt]
&\leq \varepsilon^{-2}\BB E\pare{\int_0^TB^{\pare{k}}_{i,j}\pare{t-s}^2\pic{\# M_{\gamma,1}^{\pare{k}}}\pare{ds}}
\end{aligned} 
\end{align}
for every $\varepsilon>0$, $i,j,l\in\llav{1,2}$, and $\#\in\llav{\mathfrak{R},\mathfrak{I}}$. By Lemma 5.1 in the Appendix A of \cite{KL99}, through easy computations, we get that there exists a constant $C_1$ such that 
\begin{align}\label{polk}
\pic{\# M_{\gamma,l}^{\pare{k}}}\pare{t}=\int_0^tL_\gamma\pic{\sigma_{\gamma, l}\pare{s}, \# F^{\pare{k}}}-2\pic{\sigma_{\gamma, l}\pare{s}, \# F^{\pare{k}}}L_\gamma\pic{\sigma_{\gamma, l}\pare{s}, \# F^{\pare{k}}}\ds\leq C_1\gamma t
\end{align}
As the maximum of the modulus of the entries of a matrix defines a norm, and as all the norms are equivalent, Lemma  \ref{az} guarantees the existence of a constant $C_2$ such that 
\begin{align}\label{voi}
\abs{B^{\pare{k}}_{i,j}\pare{t-s}}\le C_2e^{\frac{1}{2}\mathfrak R\pare{\mu_1^{\pare{k}}}\pare{t-s}}
\end{align}
 Plugging the estimations \eqref{polk} and \eqref{voi} into \eqref{pol}, we get that, 
\begin{align}\label{pol5}
\BB P\pare{\sup_{t\in[0,T]}\abs{\int_0^tB^{\pare{k}}_{i,j}\pare{t-s}\# M_{\gamma,l}^{\pare{k}}\pare{ds}}>\varepsilon}\leq \varepsilon^{-2}\gamma C_2^2\int_0^T e^{\mathfrak R\pare{\mu_1^{\pare{k}}}\pare{t-s}}ds\leq \frac{C_2^2}{\mathfrak R\pare{\mu_1^{\pare{k}}}}\varepsilon^{-2}\gamma 
\end{align}
for every $\varepsilon>0$, $i,j,l\in\llav{1,2}$, and $\#\in\llav{\mathfrak{R},\mathfrak{I}}$.
By \eqref{019}, we get that
\begin{align}
\mu_1^{\pare{k}}=-1+\frac{\alpha_1\phi_1^{\pare{k}}+\alpha_2\phi_2^{\pare{k}}}{2}+\frac{1}{2}\sqrt{\pare{\alpha_1\phi_1^{\pare{k}}-\alpha_2\phi_2^{\pare{k}}}^2-\tanh\pare{\beta_1\lambda}\tanh\pare{\beta_2\lambda}}
\end{align} 
 Since $\lim_{\abs{k}\to\infty}\mathfrak{R}\pare{\mu_1^{\pare{k}}}=-1$, the family $\llav{\mathfrak R\pare{\mu_1^{\pare{k}}}}_k$ is  uniformly lower bounded in $k$ and we can find a common constant $C_3$ which can replace $\frac{C_2^2}{\mathfrak R\pare{\mu_1^{\pare{k}}}}$ in the right hand side of \eqref{pol5}. Then, by decomposing $\int_0^t e^{\pare{t-s} A^{\pare{k}}}\ul M_\gamma^{\pare{k}}\pare{ds}$ first into the two coordinates and after into their real and immaginary parts, we can conclude that, for all $\zeta>0$ and for all $k\neq\pm 1$
\begin{align}\label{polok}
\begin{aligned}
\BB P\pare{\sup_{t\in[0,T]}\norm{\int_0^t e^{\pare{t-s} A^{\pare{k}}}\ul M_\gamma^{\pare{k}}\pare{ds}}>\zeta}&\le  C_4\zeta^{-2}\gamma
\end{aligned}
\end{align}
\eqref{mkj} follows after taking $\zeta=k^2\gamma^{\frac{1}{2}-\delta}$ into \eqref{polok}.  The proof for \eqref{4} is similiar, so we will omit it. We proceed with the proof  of \eqref{rew}. 

As before, it is enough to prove that 
\begin{align}\label{88}
\BB P\pare{\abs{\#{X}_{\gamma, i}^{\pare{1}}}\geq\tilde\varepsilon}\leq \tilde\varepsilon^{-2}\gamma
\end{align}
for every $\tilde \varepsilon>0$, $i\in\llav{1,2}$, and $\#\in\llav{\mathfrak{R},\mathfrak{I}}$; \eqref{88} is a consequence of Chebyshev inequality.
\end{proof}

{\bf Acknowledgments.}
It is a great pleasure to thank Errico Presutti for suggesting us the problem and for his continuous advising.
We also acknowledge (in alphabetical order) fruitful discussions with In\'es Armend\'ariz,  Anna De Masi, Pablo Ferrari, Ellen Saada, Livio Triolo, and Maria Eul\'alia Vares.
The authors also acknowledge the hospitality of Laboratoire MAP5 at Universit\'e Paris Descartes.

\bibliographystyle{amsalpha}
\bibliography{biblio}

\end{document}